\theoremstyle{plain}
\newtheorem{thm}{Theorem}[section]
\newtheorem{lem}[thm]{Lemma}
\newtheorem{cor}[thm]{Corollary}
\newtheorem{prop}[thm]{Proposition}
\theoremstyle{definition}
\newtheorem{df}[thm]{Definition}
\newtheorem{eg}[thm]{Example}
\theoremstyle{remark}
\newtheorem{rem}[thm]{Remark}
\newcommand{\relmiddle}[1]{\mathrel{}\middle#1\mathrel{}}
\numberwithin{equation}{section}
\newcommand{\E}{\mathbb{E}}
\newcommand{\N}{\mathbb{N}}
\renewcommand{\P}{\mathbb{P}}
\newcommand{\Z}{\mathbb{Z}}
\newcommand{\cP}{\mathcal{P}}
\renewcommand{\a}{\alpha}
\newcommand{\dl}{\delta}
\newcommand{\eps}{\varepsilon}
\newcommand{\om}{\omega}
\newcommand{\del}{\partial}
\DeclareMathOperator{\gir}{gir}
\DeclareMathOperator{\im}{Im}
\DeclareMathOperator{\MC}{MC}
\DeclareMathOperator{\MH}{MH}
\DeclareMathOperator{\rk}{rk}
\DeclareMathOperator{\var}{Var}
\newcommand{\Blue}{\textcolor{blue}}
\newcommand{\Red}{\textcolor{red}}
\title{
Girth, magnitude homology, and phase transition of diagonality
%
}
\author{Yasuhiko \textsc{Asao}\thanks{Center for Advanced Intelligence Project, RIKEN. \texttt{yasuhiko.asao@riken.jp}} \and Yasuaki \textsc{Hiraoka}\thanks{Kyoto University Institute for Advanced Study, WPI-ASHBi, Kyoto University. Center for Advanced Intelligence Project, RIKEN. \texttt{hiraoka.yasuaki.6z@kyoto-u.ac.jp}} \and  Shu \textsc{Kanazawa}\thanks{Kyoto University Institute for Advanced Study, Kyoto University. \texttt{kanazawa.shu.73m@st.kyoto-u.ac.jp}}}
\date{\today}
\begin{document}
\maketitle
\begin{abstract}
This paper studies the magnitude homology of graphs focusing mainly on the relationship between its diagonality and the girth.
Magnitude and magnitude homology are formulations of the Euler characteristic and the corresponding homology, respectively, for finite metric spaces, first introduced by Leinster and Hepworth--Willerton. Several authors study them restricting to graphs with path metric, and some properties which are similar to the ordinary homology theory have come to light. However, the whole picture of their behavior is still unrevealed, and it is expected that they catch some geometric properties of graphs. In this article, we show that the girth of graphs partially determines magnitude homology, that is, the larger girth a graph has, the more homologies near the diagonal part vanish. Furthermore, applying this result to a typical random graph, we investigate how the diagonality of graphs varies statistically as the edge density increases. In particular, we show that there exists a phase transition phenomenon for the diagonality. 
\end{abstract}
\section{Introduction}
The {\it magnitude} of finite metric spaces was introduced by Leinster~\cite{L13} as a formulation of Euler characteristic of finite metric spaces. Magnitude has several interesting properties such as multiplicativity property and inclusion-exclusion principle, which seems parallel to the case of ordinary Euler characteristic of topological spaces. However, whole picture of the behavior of magnitude is unrevealed, and that is attracting people in several areas of mathematics. In particular, magnitude of finite graphs, which takes values in formal power series with $\mathbb{Z}$-coefficients, is studied by several authors so far (\cite{AI20},~\cite{BK},~\cite{Gu},~\cite{HW17},~\cite{L19}).
Throughout this article, we call a finite, simple, and undirected graph without loops just a graph.

The {\it magnitude homology} of graphs is a categorification of magnitude, first introduced by Hepworth--Willerton~\cite{HW17} as an analogy of ordinary homology theory. It is a bigraded abelian group whose Euler characteristic coincides with the magnitude, and the multiplicativity property and the inclusion-exclusion principle are formulated as the K\"unneth and the Mayer--Vietoris theorems, respectively~\cite{HW17}. Their beautiful theory enables us to compute the magnitude and magnitude homology of graphs. For example, Gu~\cite{Gu} showed a remarkable compatibility of magnitude homology with algebraic Morse theory, and he computed magnitude homology of several types of graphs including well-known classical ones. Bottinelli--Kaiser~\cite{BK} study the magnitude homology of median graphs, using the retraction between homology groups. More or less, the remarkable property concerned in their works is the {\it diagonality} of graphs, first suggested in~\cite{HW17}, which guarantees simpleness of the magnitude homology in some sense.

In this article, we show that the girth of graphs partially determines magnitude homology, that is, the larger girth a graph has, the more homologies near the diagonal part vanish. Furthermore, by using this result, we investigate how the diagonality of graphs varies statistically as the edge density (proportion of the number of edges to that of possible edges) increases. In particular, we show that there exists a phase transition phenomenon for the diagonality.
As shown in~\cite{HW17}, a tree (or more generally, a forest) which has low edge density is diagonal.
It is also known that a few graphs with high edge density are diagonal. This fact is shown in~\cite{HW17} for complete graph, and in~\cite{Gu} for {\it pawful graph}~(see Definition~\ref{df:pawful}).
However, graphs with intermediate edge density are more likely to be non-diagonal.
To describe this phenomenon statistically, we consider the Erd\H os--R\'enyi graph model which is a typical random graph model extensively studied since the 1960s~(\cite{ER59},~\cite{ER60},~\cite{Gi59}). Given $n\in\N$ and $p\in[0,1]$, an Erd\H os--R\'enyi graph $G_{n,p}$ with parameters $n$ and $p$ is a random graph with $n$ vertices, where the edge between each pair of vertices is added independently with probability $p$.

Now, we explain our results in the following. We first state a relationship between girth of graphs and magnitude homology. They will be proved in an algebraic and combinatorial way in Section \ref{algebraicpart}. Let $G$ be a graph and $x \in V(G)$ be a vertex. We define the {\it local girth of} $G$ {\it at} $x$ by
\[
\gir_x(G)\coloneqq\inf\{i\ge3\mid\text{ there exists a cycle of length $i$ in $G$ containing $x$}\}.
\]
We also define the {\it girth  of} $G$ by 
\[
\gir(G) \coloneqq \min_{x} \gir_{x}(G).
\]
Note that the following statements are compatible with the computation of magnitude homology for trees and cycle graphs in~\cite{Gu} and~\cite{HW17}, respectively. In particular, Corollary~\ref{cor:girthDiag} is a generalization of the computation of magnitude homology of trees in \cite[Corollary~6.8]{HW17}.
Below, $\MH_{*,*}(G)$ is the magnitude homology of $G$, and the superscript $x$ of $\MH_{*,*}^x(G)$ indicates the restriction on the starting point (see Section~\ref{ssec:MH} for the definitions).
\begin{thm}\label{lpart}
Let $\ell \geq 1$. If $\gir_{x}(G)\geq 5$, then
\[
\MH^{x}_{\ell, \ell}(G) \cong \mathbb{Z}^{{\rm deg}x},
\]
where ${\rm deg}x$ denotes the degree of the vertex $x$. 
\end{thm}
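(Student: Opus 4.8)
The plan is to compute $\MH^{x}_{\ell,\ell}(G)$ directly from the magnitude chain complex; write $\partial$ for its differential and $\partial_{i}$ for the partial face maps of which it is an alternating sum. A generator of $\MC_{k,\ell}(G)$ is a $(k+1)$-tuple with $k$ consecutive gaps all of length $\ge1$, so $\MC^{x}_{\ell+1,\ell}(G)=0$ and hence $\MH^{x}_{\ell,\ell}(G)=\ker\bigl(\partial\colon\MC^{x}_{\ell,\ell}(G)\to\MC^{x}_{\ell-1,\ell}(G)\bigr)$, a subgroup of a free abelian group that I must identify. A basis of $\MC^{x}_{\ell,\ell}(G)$ consists of the length-$\ell$ walks $w=(x=w_{0},w_{1},\dots,w_{\ell})$ of $G$ (every gap has length exactly $1$), and for such $w$ one has $\partial_{i}w\ne0$ exactly when $d(w_{i-1},w_{i+1})=2$, in which case $\partial_{i}w$ is obtained by deleting $w_{i}$. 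Call $w$ \emph{degenerate} if $\partial w=0$. The theorem reduces to two claims: (i) the degenerate walks are precisely the $\deg x$ ``back-and-forth'' walks $(x,y,x,y,\dots)$ with $y\in N(x)$; and (ii) the images $\{\partial w:w\text{ non-degenerate}\}$ in $\MC^{x}_{\ell-1,\ell}(G)$ are linearly independent. Given (i) and (ii), any $c=\sum_{w}a_{w}w\in\ker\partial$ satisfies $\sum_{w\text{ non-deg.}}a_{w}\partial w=0$, hence $a_{w}=0$ for every non-degenerate $w$; so $\ker\partial$ is freely generated by the $\deg x$ back-and-forth walks, which is the assertion.

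For (i) the only features of $\gir_{x}(G)\ge5$ I will use are that $N(x)$ is an independent set and that every vertex at distance $2$ from $x$ has a unique neighbour in $N(x)$ — failing either produces a $3$- or $4$-cycle through $x$. If $\partial w=0$ then $d(w_{i-1},w_{i+1})\ne2$ for all $1\le i\le\ell-1$, i.e.\ $w_{i+1}=w_{i-1}$ or $w_{i+1}\sim w_{i-1}$. By induction on $i$ one shows $w$ is back-and-forth on positions $0,\dots,i+1$: given that $w$ is back-and-forth on $0,\dots,i$, one has $w_{i-1},w_{i}\in\{x,w_{1}\}$, and the possibility $w_{i+1}\sim w_{i-1}$ with $w_{i+1}\ne w_{i-1}$ would then make $w_{i+1}$ and $w_{1}$ two distinct adjacent members of $N(x)$, contradicting independence; hence $w_{i+1}=w_{i-1}$. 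Thus $w$ alternates $x,w_{1},x,w_{1},\dots$, there are exactly $\deg x$ such walks, and each plainly satisfies $\partial w=0$.

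For (ii) I will use a leading-term argument. Each generator of $\MC^{x}_{\ell-1,\ell}(G)$ has a unique gap of length $2$; let $p(\cdot)\in\{1,\dots,\ell-1\}$ be its position, so that $\MC^{x}_{\ell-1,\ell}(G)$ is the direct sum of its ``$p=m$'' subgroups. Deleting the $i$-th vertex of a length-$\ell$ walk gives, when nonzero, a generator with $p=i$. For non-degenerate $w$, let $j=j(w)$ be least with $d(w_{j-1},w_{j+1})=2$; running the induction in (i) over $1,\dots,j-1$ shows $w$ is back-and-forth on $0,\dots,j$, so $w_{j-1},w_{j}\in\{x,w_{1}\}$. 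Hence $\partial w=\pm\,e(w)+(\text{terms of }p\text{-value}>j)$, where $e(w):=\partial_{j}w$ is the unique smallest-$p$ term. The crux is that $w\mapsto e(w)$ is injective on non-degenerate walks: $e(w)$ determines $j=p(e(w))$ and the vertices $a=w_{j-1}$, $b=w_{j+1}$ flanking the deleted $w_{j}$, with $d(a,b)=2$ and $a\in\{x,w_{1}\}$, so $N(a)\cap N(b)$ — to which $w_{j}$ must belong — is a singleton: if $a=x$ it is the unique $N(x)$-neighbour of the distance-$2$ vertex $b$, and if $a=w_{1}\in N(x)$ then $b\in N(x)$ as well, so $x\in N(a)\cap N(b)$ while any further common neighbour would contradict $\gir_{x}(G)\ge5$; thus $w_{j}$, and so $w$, is recovered. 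Finally, if $\sum_{w\in S}a_{w}\partial w=0$ with $S$ a finite nonempty set of non-degenerate walks and all $a_{w}\ne0$, set $j^{\ast}=\min_{w\in S}j(w)$: the ``$p=j^{\ast}$'' component of this sum equals $\pm\sum_{w\in S,\,j(w)=j^{\ast}}a_{w}e(w)$, the generators $e(w)$ occurring here are pairwise distinct by injectivity, so those $a_{w}$ all vanish — a contradiction. This proves (ii), and with it the theorem.

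The case $\ell=1$ needs no hypothesis, since $\MC^{x}_{0,1}(G)=\MC^{x}_{2,1}(G)=0$ and $\MC^{x}_{1,1}(G)$ is free on $N(x)$; and $\gir_{x}(G)\ge5$ is sharp, since for $C_{4}$ one computes $\MH^{x}_{2,2}(C_{4})\cong\mathbb{Z}^{3}$ rather than $\mathbb{Z}^{\deg x}=\mathbb{Z}^{2}$. The main obstacle is the pair of structural facts for which $\gir_{x}(G)\ge5$ is genuinely needed — that a walk starting at $x$ with no smoothing face is forced to be back-and-forth, and the singleton statement for $N(a)\cap N(b)$ — after which the linear algebra of the leading-term argument is routine.
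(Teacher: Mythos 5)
Your proof is correct. It recovers the same core combinatorics as the paper's argument --- the same deletion map, the same back-and-forth lemma, and the same injectivity lemma --- but packages them differently: you compute $\ker\partial$ directly instead of constructing a Morse matching and invoking Sk\"oldberg's theorem. Concretely, your map $w\mapsto e(w)$ (delete the first smooth point) is the paper's $f_\ell$; your claim (i) is Lemma~\ref{backandforth}; and your proof that $e$ is injective --- the singleton statement for $N(a)\cap N(b)$, resting on the absence of $3$- and $4$-cycles through $x$ --- is Lemma~\ref{finj}. The genuine departure is at the final step: where the paper verifies acyclicity of the inverted digraph (Lemma~\ref{fmorse}) as the abstract hypothesis of the Morse theorem, you observe that $\MC^x_{\ell-1,\ell}$ is graded by the position $p$ of its unique length-$2$ gap, that $\partial w=\pm e(w)+(\text{terms of strictly larger }p)$, and that linear independence of $\{\partial w : w \text{ non-degenerate}\}$ therefore follows by triangularity. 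Because $\MC^x_{\ell+1,\ell}=0$ the top homology is a bare kernel, so this is all you need; no homotopy-equivalence statement enters, and your route is shorter and self-contained for the diagonal part. What the Morse-theoretic framing buys the paper is reusability: the matching $f_\ell$ is extended to $f_{\ell-i}$ on the whole chain complex to prove Theorem~\ref{otherpart}, where the homology is no longer a bare kernel and the general homotopy-equivalence theorem is genuinely needed.
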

The following is also obtained by Sazdanovic--Summers in \cite[Thoerem~4.3]{RV}.
\begin{cor}
Let $\ell \geq 1$. If $\gir(G)\geq 5$, then
\[
\MH_{\ell, \ell}(G) \cong \mathbb{Z}^{2\#E(G)},
\]
where $\#E(G)$ denotes the number of edges of $G$.
\end{cor}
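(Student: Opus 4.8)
The plan is to reduce the statement to Theorem~\ref{lpart} via the standard direct-sum decomposition of magnitude homology over starting points. Recall that the magnitude chain complex $\MC_{*,\ell}(G)$ is generated by tuples $(x_0,\dots,x_k)$ of vertices with $x_{i-1}\neq x_i$ and $\sum_i d(x_{i-1},x_i)=\ell$, and that its differential deletes only the intermediate vertices $x_1,\dots,x_{k-1}$; in particular it fixes the first vertex $x_0$. Consequently the complex splits as $\MC_{*,\ell}(G)=\bigoplus_{x\in V(G)}\MC^x_{*,\ell}(G)$, where the $x$-summand is spanned by those tuples with $x_0=x$, and this splitting descends to homology to give $\MH_{\ell,\ell}(G)\cong\bigoplus_{x\in V(G)}\MH^x_{\ell,\ell}(G)$.

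Next I would unwind the hypothesis: by the definition $\gir(G)=\min_x\gir_x(G)$, the assumption $\gir(G)\geq 5$ is exactly the assertion that $\gir_x(G)\geq 5$ for every vertex $x\in V(G)$. Theorem~\ref{lpart} then applies at each vertex, yielding $\MH^x_{\ell,\ell}(G)\cong\mathbb{Z}^{\deg x}$ for all $x\in V(G)$ and all $\ell\geq 1$.

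Finally I would assemble these facts: substituting into the direct-sum decomposition gives $\MH_{\ell,\ell}(G)\cong\bigoplus_{x\in V(G)}\mathbb{Z}^{\deg x}\cong\mathbb{Z}^{\sum_{x\in V(G)}\deg x}$, and the handshake lemma $\sum_{x\in V(G)}\deg x=2\#E(G)$ completes the proof. The only step needing explicit verification — and it is genuinely routine, being part of the basic formalism set up in Section~\ref{ssec:MH} — is that the decomposition by starting point is a decomposition of chain complexes; since all of the substantive work has already been done in Theorem~\ref{lpart}, there is no real obstacle here.
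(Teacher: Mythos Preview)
Your proof is correct and is exactly the argument the paper has in mind: the corollary is stated without proof, but it follows immediately from Theorem~\ref{lpart} via the decomposition~\eqref{eq:decomp} established in Section~\ref{ssec:MH}, together with $\sum_{x}\deg x = 2\#E(G)$.
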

The following are extensions of the above.
\begin{thm}\label{otherpart}
Let $\ell \geq 1$ and $i\ge0$. If $\gir_{x}(G)\geq  2i + 5$, then 
\[
\MH^{x}_{\ell -j, \ell}(G) \cong\begin{cases}
\mathbb{Z}^{{\rm deg}x},    & j = 0,\\
0,                          & 1 \leq j \leq i.
\end{cases}
\]
\end{thm}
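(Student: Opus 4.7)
My plan is to extend the analysis underlying Theorem~\ref{lpart} (which is the case $j=0$) to the nearby chain degrees. For fixed $\ell$, the group $\MC^x_{k, \ell}(G)$ vanishes whenever $k > \ell$, since the $k$ positive graph distances in a generator must sum to $\ell$. Hence the magnitude homology in total degree $\ell$ is computed by the finite truncated complex
\[
0 \longrightarrow \MC^x_{\ell, \ell}(G) \xrightarrow{\partial} \MC^x_{\ell-1, \ell}(G) \xrightarrow{\partial} \cdots \xrightarrow{\partial} \MC^x_{\ell-i-1, \ell}(G),
\]
and the theorem asserts homology $\mathbb{Z}^{\deg x}$ at position $k = \ell$ and vanishing at $k = \ell-j$ for $1 \leq j \leq i$. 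The geometric ingredient I would exploit is the following consequence of the girth hypothesis: if $\gir_x(G) \geq 2i+5$, then for every vertex $y$ with $d(x,y) \leq i+2$ the shortest path from $x$ to $y$ is unique, since two distinct geodesics would concatenate to a cycle of length at most $2(i+2) = 2i+4$ through $x$. This effectively gives a well-defined ``geodesic tree of radius $i+2$'' rooted at $x$.

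Using this uniqueness, I would construct an explicit chain contraction $s \colon \MC^x_{k, \ell}(G) \to \MC^x_{k+1, \ell}(G)$ in the range $\ell-i \leq k \leq \ell-1$, sending a chain $\sigma = (x=x_0, x_1, \dots, x_k)$ of distance sequence $(d_1, \dots, d_k)$ to an augmented chain obtained by inserting a canonical intermediate vertex at a carefully chosen position, using the unique geodesics from $x$. Equivalently, one can package this as an algebraic Morse matching in the spirit of Gu~\cite{Gu}: each non-critical chain is matched to its insertion/deletion partner, and the ``critical'' unmatched chains turn out to be exactly the back-and-forth walks $(x, y, x, y, \dots)$ lying in $\MC^x_{\ell, \ell}(G)$, contributing the $\mathbb{Z}^{\deg x}$ already identified in Theorem~\ref{lpart}. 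The argument then naturally fits an induction on $i$: the inductive hypothesis $\gir_x(G) \geq 2(i-1)+5$ handles $j \leq i-1$, and the strengthened hypothesis $\gir_x(G) \geq 2i+5$ handles the new degree $k = \ell-i$ by controlling geodesic uniqueness out to distance $i+2$.

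The main obstacle is making the matching (equivalently, the contraction $s$) canonical and verifying the chain homotopy identity $\partial s + s\partial = \mathrm{id}$ (or, equivalently, acyclicity of the Morse matching graph). The naive choice—splitting at the first step of length $\geq 2$—can place the vertex to be inserted outside the radius-$(i+2)$ ball around $x$ where geodesic uniqueness holds, so the matching rule must instead be based on the first position at which $\sigma$ departs from the geodesic tree rooted at $x$. With this refinement, the bulk of the work becomes a careful bookkeeping of how the face maps $\partial_m$ interact with the insertion operator at a nearby position, tracking alternating signs and verifying that cross terms cancel. I expect this verification to be intricate but not to require ideas beyond those already appearing in the proof of Theorem~\ref{lpart}; the main subtlety is identifying the right filtration (by total excess $\ell - k$, or by the location of the first non-geodesic step) along which the matching descends to an exact cancellation.
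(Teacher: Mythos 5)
Your overall strategy --- an algebraic Morse matching (equivalently a partial chain contraction) on the truncated complex, with critical cells exactly the alternating walks $(x,y,x,y,\dots)$ in $\MC^x_{\ell,\ell}(G)$ --- is the same as the paper's. However, the geometric ingredient on which you base the matching is false under the stated hypothesis. You claim that $\gir_x(G)\ge 2i+5$ forces uniqueness of geodesics from $x$ to every vertex within distance $i+2$, on the grounds that two distinct geodesics would concatenate to a short cycle \emph{through $x$}. They need not: two geodesics from $x$ to $y$ that agree on their first step and diverge later produce a short cycle avoiding $x$ entirely, and the local girth $\gir_x$ says nothing about such cycles. Concretely, take $V=\{x,a,b,c,y\}$ with edges $xa$, $ab$, $ac$, $by$, $cy$: then $x$ has degree one, so $\gir_x(G)=\infty$, yet there are two geodesics of length $3$ from $x$ to $y$. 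Thus already for $i=1$ your ``geodesic tree of radius $i+2$ rooted at $x$'' does not exist, and the matching rule ``first position where the tuple departs from the geodesic tree'' is not well defined.

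The paper circumvents exactly this issue by exploiting the structure of magnitude chains rather than the metric ball around $x$. Lemma~\ref{backandforth} shows that before the first gap all points of a relevant tuple are forced to alternate between $x_0=x$ and $x_1$, so the endpoints of the first gap lie in $\{x_0,x_1\}$; consequently every cycle produced in the argument genuinely passes through $x$, and only then does $\gir_x(G)\ge 2i+5$ yield a contradiction (see Lemmas~\ref{singsmooth} and~\ref{hjinj}). Moreover, the paper needs \emph{two} successive Morse matchings: a first one ($M_{f_*}$, deleting the first smooth point before the first gap), whose critical complex is described by Lemma~\ref{unmatched}, and a second one ($M_{h_*}$, deleting or inserting the vertex at the gap itself), which requires a case analysis according to whether the gap endpoint is smooth or singular (Lemmas~\ref{smoothsing}--\ref{singsmooth}). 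Your single-stage contraction would have to reproduce this case split; as written it does not, and in particular it does not explain how to cancel tuples whose first gap has distance exactly $2$ but admits no valid insertion (condition (iii) of Lemma~\ref{unmatched}). To repair the argument you would need to replace geodesic uniqueness in the ball by the weaker, chain-specific statements the paper actually proves.
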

\begin{cor}\label{cor:girthDiag}
Let $\ell\geq1$ and $i\ge0$. If $\gir(G)\geq  2i + 5$, then 
\[
\MH_{\ell -j, \ell}(G) \cong\begin{cases}
\mathbb{Z}^{2\# E(G)},  & j = 0,\\
0,                      & 1 \leq j \leq i.
\end{cases}
\]
\end{cor}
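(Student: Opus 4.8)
The plan is to deduce Corollary~\ref{cor:girthDiag} from Theorem~\ref{otherpart} by assembling the starting-point components. The one structural fact needed is the direct sum decomposition of magnitude homology over the starting vertex,
\[
\MH_{k,\ell}(G) \cong \bigoplus_{x \in V(G)} \MH^{x}_{k,\ell}(G),
\]
which is immediate from the definitions recalled in Section~\ref{ssec:MH}: a generating tuple of the magnitude chain complex has a well-defined first coordinate, the differential never changes it, and $\MH^{x}_{*,*}(G)$ is by definition the homology of the subcomplex spanned by those tuples that start at $x$. So the magnitude homology in bidegree $(k,\ell)$ is literally the direct sum of its starting-point summands.

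First I would unwind the hypothesis. Since $\gir(G) = \min_{x} \gir_{x}(G)$, the assumption $\gir(G) \geq 2i+5$ says exactly that $\gir_{x}(G) \geq 2i+5$ holds for \emph{every} vertex $x \in V(G)$. Hence Theorem~\ref{otherpart} applies simultaneously at each $x$, giving $\MH^{x}_{\ell,\ell}(G) \cong \Z^{\deg x}$ and $\MH^{x}_{\ell - j,\ell}(G) = 0$ for all $1 \leq j \leq i$.

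Finally I would sum over $x \in V(G)$. For $j = 0$ this gives $\MH_{\ell,\ell}(G) \cong \bigoplus_{x} \Z^{\deg x} \cong \Z^{\sum_{x} \deg x}$, and the handshaking lemma $\sum_{x \in V(G)} \deg x = 2\#E(G)$ concludes this case. For $1 \leq j \leq i$ the direct sum of zero groups is zero, so $\MH_{\ell - j,\ell}(G) = 0$, completing the proof.

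I do not expect any genuine obstacle here: once Theorem~\ref{otherpart} and the starting-point decomposition are in hand, the corollary is a two-line computation whose only arithmetic ingredient is the handshaking lemma. The only point worth a moment's care is that the isomorphisms in Theorem~\ref{otherpart} are produced separately for each $x$, so no compatibility between them is needed in order to form the direct sum.
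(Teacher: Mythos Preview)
Your proposal is correct and follows precisely the approach the paper intends: the corollary is stated without explicit proof because it follows immediately from Theorem~\ref{otherpart} via the starting-point decomposition~\eqref{eq:decomp} and the handshaking lemma, exactly as you argue. There is nothing to add.
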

The above results will be proved by using algebraic Morse theory. The following gives a criterion for the diagonality of graphs. Let $e \in E(G)$ be an edge. We define the {\it local girth of} $G$ {\it at} $e$ by 
\[
\gir_e(G)\coloneqq\inf\{i\ge3\mid\text{ there exists a cycle of length $i$ in $G$ containing $e$ as its edge}\}.
\]
Note that we have $\gir(G) = \min_{e} \gir_{e}(G)$.
\begin{thm}\label{nondiag}
Let $G$ be a graph and $e \in E(G)$ be an edge. If $k\coloneqq\gir_e(G)\in[5,\infty)$, then $\MH_{2, \ell}(G) \neq 0$ for $\ell = \lfloor \frac{k+1}{2} \rfloor$.
\end{thm}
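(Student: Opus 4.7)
The plan is to exhibit an explicit 2-cycle $c \in \MC_{2,\ell}(G)$ together with a linear functional that detects it. Fix a cycle $v_0, v_1, \ldots, v_{k-1}, v_0$ of length $k$ containing $e = \{v_0, v_1\}$, and write $k = 2m+1$ or $k = 2m$; correspondingly set $y = v_{m+1}$ (so $\ell = m+1$) or $y = v_m$ (so $\ell = m$), and define
\[
c = \begin{cases} (v_0, v_1, v_{m+1}), & k = 2m+1, \\ (v_0, v_1, v_m) - (v_0, v_{2m-1}, v_m), & k = 2m. \end{cases}
\]
The recurring tool is the cycle-extraction principle: any closed walk in $G$ of length $L$ that traverses $e$ exactly once produces a simple cycle through $e$ of length at most $L$. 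Applying this to arcs of the distinguished cycle, the hypothesis $\gir_e(G) = k$ forces $d_G(v_i, v_j)$ to equal the cyclic distance whenever the relevant arc contains $e$, which places $c$ in $\MC_{2,\ell}(G)$ with $\partial c = 0$ (odd case: $d(v_0, y) < \ell$; even case: the boundaries of the two summands cancel).

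Since the boundary preserves first and last vertices, the chain complex decomposes as $\MC_{*, \ell}(G) = \bigoplus_{(u,v)} \MC_{*, \ell}^{u,v}(G)$ where $\MC_{*, \ell}^{u,v}$ is the subcomplex of tuples with first vertex $u$ and last vertex $v$; it is therefore enough to show $[c] \neq 0$ in the corresponding summand $\MH_{2,\ell}^{v_0, y}(G)$. I would define a functional $\phi \colon \MC_{2,\ell}^{v_0, y}(G) \to \mathbb{Z}$ by $\phi((v_0, z, y)) = 1$ when $z$ lies on some geodesic from $v_1$ to $y$ in $G$, and $0$ otherwise. Then $\phi(c) = 1$: $v_1$ is trivially on such a geodesic, while in the even case $v_{2m-1}$ is not, because a short cycle-through-$e$ argument gives $d(v_1, v_{2m-1}) = 2$, so $d(v_1, v_{2m-1}) + d(v_{2m-1}, v_m) = m + 1 > m - 1 = d(v_1, v_m)$.

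The crux is to verify $\phi \circ \partial = 0$ on $\MC_{3,\ell}^{v_0, y}(G)$. For $\beta = (v_0, x_1, x_2, y)$ one has
\[
\phi(\partial \beta) = [x_1 \in G(v_1, y)]\,[\mathrm{sm}_2] - [x_2 \in G(v_1, y)]\,[\mathrm{sm}_1],
\]
where $G(v_1, y)$ denotes the set of vertices lying on some $v_1$-$y$ geodesic. A preliminary lemma asserts $d(v_0, x) = 1 + d(v_1, x)$ for every $x \in G(v_1, y) \setminus \{y\}$: otherwise $d(v_0, x) \leq d(v_1, x)$, and concatenating $e$, a geodesic $v_1 \to x$, and a geodesic $x \to v_0$ yields a closed walk of length at most $2 d(v_1, x) + 1 < k$ through $e$, contradicting the extraction principle. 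Combining this identity with the chain-length constraint, if $x_1 \in G(v_1, y)$ then $x_2 \in G(v_1, y)$ with both smoothness conditions automatic, so the two indicator terms cancel. In the remaining configuration $x_2 \in G(v_1, y)$ with $[\mathrm{sm}_1]$ but $x_1 \notin G(v_1, y)$, one splits on $d(v_0, x_1) - d(v_1, x_1) \in \{-1, 0, +1\}$: the value $+1$ forces $x_1 \in G(v_1, y)$ (contradiction), while each of the values $-1$ and $0$ yields a closed walk of length at most $2 d(v_1, x_2) + 2 < k$ through $e$, assembled from geodesics $v_1 \to x_2$, $x_2 \to x_1$, $x_1 \to v_0$ and the edge $e$, each piece verified to avoid $e$, again contradicting $\gir_e(G) = k$. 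Hence $\phi$ is a cocycle with $\phi(c) = 1$, proving $[c] \neq 0$ and $\MH_{2,\ell}(G) \neq 0$.

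The main obstacle is this final case analysis: explicitly constructing the auxiliary closed walks and verifying that each geodesic segment avoids $e$, which is where $\gir_e(G) = k$ is used most sharply.
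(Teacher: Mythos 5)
Your proposal is correct, and it takes a genuinely different route from the paper's. Both arguments start from the same representative cycles --- $(v_0,v_1,v_{m+1})$ for $k=2m+1$ and $(v_0,v_1,v_m)-(v_0,v_{2m-1},v_m)$ for $k=2m$ --- but the paper proves nontriviality by contradiction: assuming the class bounds, it builds an auxiliary graph $A(G)$ on the middle vertices of the $3$-tuples appearing in a hypothetical bounding chain, decomposes the chain by connected components of $A(G)$, and derives a contradiction from a parity count together with the fact (their Lemma on shortest paths) that every geodesic from $v_0$ to a vertex in the component of $2$ starts with the edge $e$. You instead exhibit an explicit cocycle: the indicator $\phi$ of the middle vertex lying on a $v_1$--$y$ geodesic, verified to kill $\im\del_{3,\ell}$ restricted to $\MC_{3,\ell}^{v_0,y}(G)$, with $\phi(c)=1$. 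This is cleaner in that it produces a certificate rather than a reductio, and it concentrates all use of $\gir_e(G)=k$ into the single identity $d(v_0,x)=1+d(v_1,x)$ for $x\in G(v_1,y)\setminus\{y\}$ plus the cycle-extraction principle. The verifications you flag as the ``main obstacle'' do go through, but they are genuinely needed and deserve to be written out: in the preliminary lemma the subcase $d(v_0,x)=d(v_1,x)-1$ cannot be handled by the walk (the geodesic $v_1\to x$ may traverse $e$), but it is ruled out directly since it would give $d(v_0,y)\le d(v_0,x)+d(x,y)=\ell-2$, contradicting $d(v_0,y)\ge\ell-1$; in the subcase $d(v_0,x)=d(v_1,x)$ neither geodesic can use $e$ since using $e$ would force the two distances to differ. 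Likewise, in the final case analysis the geodesic $v_1\to x_2$ avoids $e$ because $d(v_0,x_2)=d(v_1,x_2)+1$, the geodesic $x_1\to v_0$ avoids $e$ because $d(v_0,x_1)\le d(v_1,x_1)$ there, and a geodesic $x_2\to x_1$ through $e$ would force $d(v_0,x_1)=0$ or a negative quantity, both impossible. With these details supplied, your argument is a complete and arguably more transparent proof of Theorem~\ref{nondiag}.
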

\begin{cor}
If $G$ is a diagonal graph, then $\gir(G)=3,4$, or $\infty$.
\end{cor}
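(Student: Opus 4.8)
The plan is to argue by contraposition, so suppose $\gir(G)\notin\{3,4,\infty\}$; since the girth of a graph is either an integer $\ge 3$ or $\infty$, this means $\gir(G)\in[5,\infty)$, i.e.\ $G$ contains a cycle and every cycle through every vertex has length at least $5$. Recall from the remark just before Theorem~\ref{nondiag} that $\gir(G)=\min_{e}\gir_e(G)$, so there is an edge $e\in E(G)$ attaining the minimum, namely $\gir_e(G)=\gir(G)=:k$. By our assumption $k\in[5,\infty)$, so Theorem~\ref{nondiag} applies and gives $\MH_{2,\ell}(G)\neq0$ for $\ell=\lfloor\frac{k+1}{2}\rfloor$.

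It then remains to check that this nonvanishing group sits strictly off the diagonal. Since $k\ge5$ we have $\ell=\lfloor\frac{k+1}{2}\rfloor\ge\lfloor\frac{6}{2}\rfloor=3>2$, so $(2,\ell)$ is a bidegree with $2\neq\ell$. By the definition of diagonality (namely $\MH_{r,s}(G)=0$ whenever $r\neq s$), the group $\MH_{2,\ell}(G)$ would have to vanish if $G$ were diagonal; since Theorem~\ref{nondiag} says it does not, $G$ is not diagonal. This establishes the contrapositive, hence the corollary.

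There is essentially no obstacle here: the whole content has been packed into Theorem~\ref{nondiag}, and the corollary is a short bookkeeping deduction, the only point that really needs verification being the inequality $\lfloor\frac{k+1}{2}\rfloor\neq2$ for $k\ge5$, which is immediate. If one wished to be exhaustive one could also note that the three excluded values are genuinely realized by diagonal graphs — forests have $\gir=\infty$ and are diagonal by \cite{HW17}, while complete graphs and pawful graphs have $\gir=3$ and are diagonal by \cite{HW17} and \cite{Gu} — but this is not needed for the stated implication.
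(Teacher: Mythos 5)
Your argument is correct and is exactly the intended derivation: the paper leaves this corollary unproved because it follows immediately from Theorem~\ref{nondiag} together with the observation $\gir(G)=\min_e\gir_e(G)$, and your bookkeeping check that $\lfloor(k+1)/2\rfloor\ge3>2$ for $k\ge5$ is the only (easy) point that needs saying.
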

By considering $k = 2i + 5$ or $2i+6$ in Theorem \ref{nondiag}, it turns out that the range $1 \leq j \leq i$ guaranteeing the vanishing of magnitude homology groups in Corollary \ref{cor:girthDiag} is optimal.

Next we state stochastic properties of magnitude homology with respect to the Erd\H os--R\'enyi random graph model.
They will be shown in Section \ref{stochasticpart}. In the study of the Erd\H os--R\'enyi graph $G_{n,p}$, one is usually concerned with the asymptotic behavior of $G_{n,p}$ as the number of vertices $n$ tends to infinity, where $p$ is typically regarded as a function of $n$.
For a graph property $\cP$, we say that $G_{n,p}$ satisfies $\cP$ asymptotically almost surely (a.a.s.) if $\lim_{n\to\infty}\P(G_{n,p}\text{ satisfies }\cP)=1$.
We also use the Bachmann--Landau big-$O$/little-$o$ notation with respect to the number of vertices $n$ tending to infinity. Additionally, for non-negative functions $f(n)$ and $g(n)$, $f(n)=\om(g(n))$ mean that $g(n)=o(f(n))$.
One of the most classical themes is searching the threshold probability $p(n)$ for various graph properties $\cP$. Here, we call the probability $p(n)$ a threshold for $\cP$ if $p=o(p(n))$ implies that $G_{n,p}$ satisfies $\cP$ a.a.s. and $p=\om(p(n))$ implies that $G_{n,p}$ does not satisfy $\cP$ a.a.s. For example, $p(n)=n^{-1}$ is the threshold probability for the appearance of a cycle in $G_{n,p}$. 

The first result exhibits a phase transition for the diagonality of Erd\H os--R\'enyi graphs. This is where the magnitude homology of Erd\H os--R\'enyi graph suddenly becomes non-diagonal.
\begin{thm}\label{thm:main1}
Let $G_{n,p}$ be an Erd\H os--R\'enyi graph with parameters $n$ and $p$. Then, the following $(1)$, $(2)$, and $(3)$ hold.
\begin{enumerate}
\item If $p=o(n^{-1})$, then $G_{n,p}$ is diagonal a.a.s.
\item If $p=cn^{-1}$, then
\[
\lim_{n\to\infty}\P(G_{n,p}\text{ is non-diagonal})=\begin{cases}
1-\sqrt{1-c}\exp(c/2+c^2/4+c^3/6+c^4/8), &0<c<1,\\
1,                                       &c>1.
\end{cases}
\]
\item If $p=\om(n^{-1})$ and $p=o(n^{-3/4})$, then $G_{n,p}$ is non-diagonal a.a.s.
\end{enumerate}
\end{thm}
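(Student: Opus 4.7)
The central tool is Theorem \ref{nondiag} (and its corollary): if some edge $e$ of $G$ satisfies $\gir_e(G)\in[5,\infty)$, then $G$ is non-diagonal. I would use this as the sole certificate of non-diagonality and combine it with standard Erd\H os--R\'enyi asymptotics for cycle counts.

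Part~(1) is immediate: when $p=o(n^{-1})$, the expected total number of cycles is at most $\sum_{k\geq 3}(np)^k/(2k)=O((np)^3)=o(1)$, so $G_{n,p}$ is a forest a.a.s.; since magnitude homology of a disjoint union splits (for $\ell\geq 1$) and trees are diagonal by \cite{HW17}, any forest is diagonal. For Part~(2) with $p=cn^{-1}$ I would invoke the classical Poisson approximation for cycle counts: for each fixed $k\geq 3$, the number $X_k$ of $k$-cycles in $G_{n,p}$ converges to $\mathrm{Poisson}(c^k/(2k))$, and any finite family of $X_k$'s is jointly independent in the limit. When $0<c<1$, a classical result says a.a.s.\ every component of $G_{n,p}$ is a tree or a unicyclic graph; in such a graph every cycle edge has local girth equal to the cycle length and every tree edge has local girth $\infty$, so non-diagonality becomes asymptotically equivalent to the existence of a cycle of length $\geq 5$ (provided one also establishes the auxiliary fact that unicyclic components with cycle length $3$ or $4$, together with pendant trees, are diagonal, which I would verify using $K_3$ from \cite{HW17}, the computation of $C_4$ from \cite{Gu}, and an induction on pendant attachments via Mayer--Vietoris). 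Independence of the Poisson limits then yields
\[
\P(\text{no cycle of length}\geq 5)\to\exp\!\Bigl(-\sum_{k\geq 5}\tfrac{c^k}{2k}\Bigr)=\sqrt{1-c}\,\exp\!\Bigl(\tfrac{c}{2}+\tfrac{c^2}{4}+\tfrac{c^3}{6}+\tfrac{c^4}{8}\Bigr),
\]
giving the claimed formula. For $c>1$ the series $\sum_{k\geq 5}c^k/(2k)$ diverges, so $\P(\text{no cycle of length in }[5,K])\to e^{-a_K}\to 0$ as $K\to\infty$ (with $a_K=\sum_{k=5}^K c^k/(2k)$); for a fixed cycle $C$ of length $k\leq K$ the expected number of its edges lying in some $3$- or $4$-cycle is $O(k(c^2+c^3)/n)=o(1)$, so a.a.s.\ at least one cycle of length in $[5,K]$ has every edge avoiding short cycles, in particular an edge with $\gir_e\in[5,\infty)$.

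For Part~(3) I would apply the second-moment method to $Y\coloneqq\#\{e\in E(G_{n,p}):\gir_e(G_{n,p})=5\}$. Conditional on $e=\{u,v\}\in E(G_{n,p})$, the probability that $u$ and $v$ share a neighbour is $O(np^2)=o(1)$ and the probability of a length-$3$ $uv$-path is $O(n^2p^3)=o(1)$ under $p=o(n^{-3/4})$, while the expected number of length-$4$ $uv$-paths is $\Theta(n^3p^4)$. Multiplying gives $\E[Y]=\Theta(n^5p^5)=\Theta((np)^5)\to\infty$ since $p=\omega(n^{-1})$. The second-moment computation reduces to enumerating pairs of (edge, $5$-cycle) configurations free of short-cycle interference: edge-disjoint pairs contribute $\E[Y]^2(1+o(1))$ and pairs sharing a vertex or an edge contribute a factor $O(n^{-1})$ smaller, so $\var(Y)=o(\E[Y]^2)$ and Chebyshev yields $Y\geq 1$ a.a.s.; Theorem \ref{nondiag} then concludes non-diagonality. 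The main obstacle, in my view, is the subsidiary statement used in Part~(2), $0<c<1$, that unicyclic graphs with cycle of length $3$ or $4$ (possibly with pendant trees) are diagonal---this is what converts the Poisson cycle-count computation into the probability of diagonality and seems to require a combinatorial argument not contained in the cited references.
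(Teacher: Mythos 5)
Your strategy is sound and, for part (1) and for part (2) with $0<c<1$, it is essentially the paper's proof: the paper also reduces diagonality, on the a.a.s.\ event that every component is a tree or unicyclic (Lemma~\ref{lem:treeUni}), to the absence of cycles of length at least $5$, and then applies the joint Poisson limit (Lemma~\ref{lem:jointDist}). Two remarks on that part. First, the ``main obstacle'' you identify is not actually an obstacle: a unicyclic graph decomposes over single vertices into its cycle and pendant trees, a one-vertex gluing is always a projecting decomposition, so the Mayer--Vietoris theorem of \cite[Theorem~6.6]{HW17} (together with the diagonality of trees and of $3$- and $4$-cycles, and the non-diagonality of longer cycles from \cite{Gu}) settles exactly the statement you were worried about; the paper uses precisely this. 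Second, passing from the Poisson limit for finitely many cycle lengths to $\P(\text{no cycle of length}\ge5)$ needs a truncation; the paper controls cycles longer than $m$ via $n-\E[T(G_{n,p})]=O(1)$, but a direct Markov bound $\sum_{k>m}c^k/(2k)$ also works for $c<1$.

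Where you genuinely diverge from the paper is in part (2) with $c>1$ and in part (3). The paper treats both regimes at once (Proposition~\ref{prop:part2}) by a deterministic counting argument: the number $Y$ of edges lying on some cycle is at least the circuit rank $r(G_{n,p})=\#E-n+\xi$, which is a.a.s.\ at least $\dl n$ once $\liminf np>1$ (Lemma~\ref{lem:cirRk}, using the Erd\H os--R\'enyi formula for $\xi(G_{n,p})/n$), while the numbers $Y_3,Y_4$ of edges on $3$- or $4$-cycles are $o(n)$ by a first-moment bound valid exactly when $p=o(n^{-3/4})$; hence $\Theta(n)$ edges satisfy $\gir_e\in[5,\infty)$ and Theorem~\ref{nondiag} applies. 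Your alternatives --- Poisson limits plus an overlap estimate for $c>1$, and a second-moment computation for $Y=\#\{e:\gir_e=5\}$ in part (3) --- should both work, but they cost more: the $c>1$ argument needs the double limit $n\to\infty$ then $K\to\infty$ together with the joint estimate excluding short-cycle interference, and the variance computation for $Y$ (a count mixing a positive condition, existence of a $5$-cycle through $e$, with negative ones) is left as a sketch and is the most delicate step of your write-up. The circuit-rank route avoids all second moments and is the main structural idea of the paper's proof that your proposal misses.
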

\begin{figure}[H]
\centering
\includegraphics[width=10cm,bb=0 0 672 258]{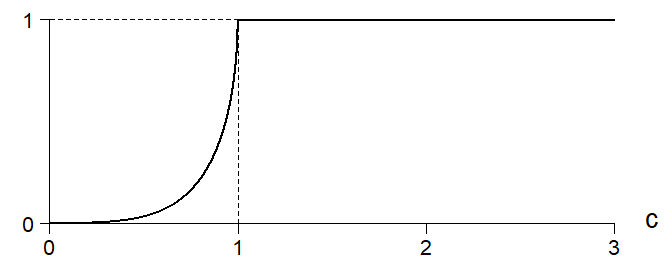}\\
\caption{The limiting function of $c$ appearing in Theorem~\ref{thm:main1}~(2).}
\label{fig:threDist}
\end{figure}
As seen in Figure~\ref{fig:threDist}, the probability that $G_{n,c/n}$ is non-diagonal approaches an explicit constant bounded away from one whenever $c<1$. Meanwhile, when $c>1$, $G_{n,c/n}$ is non-diagonal a.a.s.

A graph property $\cP$ is said to be monotone increasing if whenever a graph $G$ satisfies $\cP$ and $G$ is a subgraph of a graph $G'$ then $G'$ also satisfies $\cP$. Every monotone property has a threshold probability in Erd\H os--R\'enyi graphs~\cite{BT87}. However, since non-diagonality is not a monotone increasing graph property, it is natural to seek what happens in the regime of $p$ that Theorem~\ref{thm:main1} does not cover. The following theorem partially answers this question. 
\begin{thm}\label{thm:main2}
Let $\eps>0$ be fixed, and let $G_{n,p}$ be an Erd\H os--R\'enyi graph with parameters $n$ and $p$. Then,
\[
p\ge\Biggl(\frac{(3+\eps)\log n}n\Biggr)^{1/3}
\]
implies that $G_{n,p}$ is diagonal a.a.s.
\end{thm}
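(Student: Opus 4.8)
The plan is to deduce the diagonality of $G_{n,p}$ from Gu's theorem that every \emph{pawful} graph (Definition~\ref{df:pawful}) is diagonal~\cite{Gu}, by showing that $G_{n,p}$ is pawful a.a.s.\ under the stated hypothesis on $p$. The reason this ought to work is that pawfulness is a \emph{finitely local} combinatorial property: it asserts that every occurrence in $G$ of one of finitely many configurations on a bounded number of vertices can be ``completed'', the most demanding such requirement being that a suitable $3$-vertex configuration has a common neighbour (for example, that every triangle of $G$ lies in a copy of $K_{4}$, and that every $2$-path $x-z-y$ admits a vertex adjacent to all of $x$, $z$, $y$). Once pawfulness is unwound into conditions of this shape, proving it for $G_{n,p}$ reduces to a first-moment estimate, and the exponent $1/3$ in the threshold is precisely the signature of ``the critical configuration spans $3$ vertices, and completing it costs $3$ new edges''.

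To execute this, I would first observe that for $p$ as in the hypothesis one has $np^{2}=\om(\log n)$, so $G_{n,p}$ has diameter at most $2$ a.a.s.; hence a.a.s.\ the only pawful conditions one must verify are those attached to pairs of vertices at distance $1$ or $2$, and these form a finite list of bounded-size configuration types. Fix one such type $T$, spanning $t\le 3$ vertices, prescribing $O(1)$ edges, and requiring a vertex $w$ outside $T$ adjacent to every vertex of $T$. For a fixed placement of $T$ in $[n]$, conditionally on its prescribed edges being present the events ``$w$ completes $T$'', as $w$ ranges over the $\ge n-3$ remaining vertices, are mutually independent, each of probability $p^{t}\ge p^{3}$; hence the probability that no $w$ completes $T$ is at most $(1-p^{3})^{\,n-3}\le\exp(-(n-3)p^{3})$. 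Summing over the at most $n^{3}$ placements and bounding the prescribed-edge probabilities by $1$, the expected number of uncompletable occurrences of $T$ is at most
\[
n^{3}\exp\bigl(-(n-3)p^{3}\bigr)\ \le\ n^{3}\exp\bigl(-(3+\eps)\log n+3\bigr)\ =\ e^{3}\,n^{-\eps}\ =\ o(1),
\]
where we used $np^{3}\ge(3+\eps)\log n$ and $p\le 1$. Since there are only finitely many types $T$, Markov's inequality shows that a.a.s.\ every occurrence of every type is completable, i.e.\ $G_{n,p}$ is pawful, and therefore diagonal.

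The main obstacle is the bookkeeping around Definition~\ref{df:pawful}: one must (i) unwind pawfulness into an explicit finite family of bounded configurations together with their completion conditions; (ii) confirm that the most demanding completion condition is the existence of a common neighbour of a $3$-vertex configuration --- a $p^{3}$-event --- and not something requiring $4$ or more new edges, which is exactly what makes $(3+\eps)\log n$ the right quantity in the exponent (a $p^{4}$-requirement would be incompatible with the stated threshold, since then $np^{4}\to 0$ at the bottom of the allowed range for $p$); and (iii) accommodate any completion that asks $w$ to satisfy a further condition beyond adjacency, in which case one needs its per-$w$ success probability to be at least $c\,p^{3}$ for a constant $c$ with $c(3+\eps)>3$, which should hold because such a further condition is typically satisfied by a positive fraction of vertices $w$. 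Once pawfulness is pinned down, the remaining work is the elementary union bound above, whose only genuine content is the conditional independence obtained by fixing the prescribed edges of each configuration.
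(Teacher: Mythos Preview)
Your approach is correct and is essentially the paper's: show $G_{n,p}$ is pawful a.a.s., then invoke Gu's theorem. The paper does this in one line by citing a result of Kahle that for $p\ge((m+\eps)\log n/n)^{1/m}$ every $m$ vertices of $G_{n,p}$ a.a.s.\ have a common neighbour; with $m=3$ this is precisely your first-moment computation. Your ``main obstacle'' dissolves on reading Definition~\ref{df:pawful}: pawfulness is just diameter $\le 2$ together with the single condition that certain triples have a common neighbour, and both follow at once from ``\emph{every} triple of vertices has a common neighbour'', which is what your union bound over all $\binom{n}{3}$ triples actually proves --- no decomposition into configuration types, no conditioning on prescribed edges, and no worries about extra conditions on $w$ are needed.
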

The behavior of the probability that $G_{n,p}$ is non-diagonal in the regime of $p$ that both Theorems~\ref{thm:main1} and~\ref{thm:main2} do not cover should be studied as a further theme. At this moment, even the existence of the threshold where $G_{n,p}$ again becomes diagonal is still unknown.

Finally, we show the asymptotic behavior of each rank of magnitude homology around the threshold probability. The following result can be regarded as a weak law of large numbers for the rank of magnitude homology.
\begin{thm}\label{thm:main3}
Let $k,\ell\in\N$ and $p=cn^{-1}$ for some fixed $c>0$. Let $G_{n,p}$ be an Erd\H os--R\'enyi graph with parameters $n$ and $p$. Then,
\[
\lim_{n\to\infty}\frac{\E[\rk(\MH_{k,\ell}(G_{n,p}))]}n=c\dl_{k,\ell},
\]
where $\dl_{k,\ell}$ is the Kronecker delta function.
Moreover, for any $\eps>0$,
\[
\lim_{n\to\infty}\P\biggl(\biggl|\frac{\rk(\MH_{k,\ell}(G_{n,p}))}n-c\dl_{k,\ell}\biggr|>\eps\biggr)=0.
\]
\end{thm}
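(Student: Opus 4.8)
The plan is to use the local girth criteria (Theorems~\ref{lpart} and~\ref{otherpart}) to reduce $\rk\MH_{k,\ell}(G_{n,p})$ to twice the number of edges up to a negligible correction, and then to quantify the error by first- and second-moment estimates in the sparse regime $p=cn^{-1}$. Since $k,\ell\in\N$ we have $k,\ell\ge1$. First I would record, for an arbitrary graph $G$, two structural facts, using the splitting $\rk\MH_{k,\ell}(G)=\sum_{x\in V(G)}\rk\MH^x_{k,\ell}(G)$ (the magnitude chain differential fixes the starting vertex): the $k$ consecutive distances in a bidegree-$(k,\ell)$ generator sum to $\ell\ge k$, so $\MH_{k,\ell}(G)=0$ identically when $k>\ell$; and when $k\le\ell$ every vertex occurring in a generator of $\MC^x_{k,\ell}(G)$ lies in the ball $B_\ell(x)$, whence $\rk\MH^x_{k,\ell}(G)\le\rk\MC^x_{k,\ell}(G)\le|B_\ell(x)|^{\ell}$. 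Combining this with Theorem~\ref{lpart} ($\rk\MH^x_{\ell,\ell}(G)=\deg x$ once $\gir_x(G)\ge5$) and Theorem~\ref{otherpart} applied with $i=\ell-k$ ($\MH^x_{k,\ell}(G)=0$ once $\gir_x(G)\ge2(\ell-k)+5$, valid for $k<\ell$), together with $\sum_x\deg x=2\#E(G)$, I obtain the pointwise identities
\begin{gather*}
\rk\MH_{k,\ell}(G)=\sum_{x:\,\gir_x(G)\le 2(\ell-k)+4}\rk\MH^x_{k,\ell}(G)\qquad(k<\ell),\\
\rk\MH_{\ell,\ell}(G)=2\#E(G)+\sum_{x:\,\gir_x(G)\le4}\bigl(\rk\MH^x_{\ell,\ell}(G)-\deg x\bigr).
\end{gather*}

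Next I would establish two estimates for $G_{n,p}$ with $p=cn^{-1}$ and a fixed vertex $x$, with constants uniform in $n$: a first-moment count over the cycles of length at most $M$ through $x$ gives $\P(\gir_x(G_{n,p})\le M)\le\tfrac1{2n}\sum_{t=3}^M c^{\,t}=O(n^{-1})$; and, coupling the breadth-first exploration from $x$ with the first $\ell$ generations of a Galton--Watson tree with $\bin(n,cn^{-1})$ offspring (whose moments are bounded uniformly in $n$, since $\E[\bin(n,cn^{-1})^m]\le\sum_{j\le m}S(m,j)c^j$ for the Stirling numbers $S(m,j)$ of the second kind), one gets $\E[\,|B_\ell(x)|^{m}]=O(1)$ for every fixed $m$. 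By Cauchy--Schwarz these combine into
\begin{equation*}
\E\bigl[\mathbf 1\{\gir_x(G_{n,p})\le M\}\,|B_\ell(x)|^{\ell}\bigr]\le\P(\gir_x(G_{n,p})\le M)^{1/2}\,\E[\,|B_\ell(x)|^{2\ell}]^{1/2}=O(n^{-1/2}).
\end{equation*}

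For $k<\ell$ this finishes the proof: taking expectations in the first identity, bounding $\rk\MH^x_{k,\ell}(G_{n,p})\le|B_\ell(x)|^{\ell}$, and invoking the displayed estimate with $M=2(\ell-k)+4$ together with exchangeability, $\E[\rk\MH_{k,\ell}(G_{n,p})]\le n\cdot O(n^{-1/2})=o(n)$; hence $\E[\rk\MH_{k,\ell}(G_{n,p})]/n\to0=c\dl_{k,\ell}$, and Markov's inequality upgrades this to convergence in probability. For $k=\ell$, the error term $R_n:=\rk\MH_{\ell,\ell}(G_{n,p})-2\#E(G_{n,p})$ satisfies $|R_n|\le2\sum_{x:\,\gir_x(G_{n,p})\le4}|B_\ell(x)|^{\ell}$ (using $\deg x\le|B_\ell(x)|$), so $\E|R_n|=O(n^{1/2})$ and $R_n/n\to0$ in $L^1$, hence in probability; meanwhile $\#E(G_{n,p})\sim\bin\bigl(\binom n2,cn^{-1}\bigr)$ has mean $\binom{n}{2}cn^{-1}$ and variance $O(n)$, so $2\#E(G_{n,p})/n\to c$ in $L^2$. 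Adding the two contributions gives $\rk\MH_{\ell,\ell}(G_{n,p})/n\to c=c\dl_{\ell,\ell}$ in probability and $\E[\rk\MH_{\ell,\ell}(G_{n,p})]/n\to c$; together with the case $k\ne\ell$ this establishes both assertions.

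I expect the main obstacle to be conceptual rather than computational: it is the observation in the first step that the local girth theorems force the local magnitude homology $\MH^x_{k,\ell}(G)$ to be as small as possible — trivial off the diagonal, and $\cong\mathbb Z^{\deg x}$ on it — at every vertex except for the $O(1)$-in-expectation vertices lying on a short cycle, so that the rank is governed by the edge count up to a negligible term. The remaining analytic input is routine; the one mildly delicate point is making the moment bounds for $|B_\ell(x)|$ uniform in $n$, which is precisely what the Galton--Watson coupling provides.
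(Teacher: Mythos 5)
Your proposal is correct and follows the paper's overall strategy: split the rank vertex-by-vertex via the decomposition $\rk\MH_{k,\ell}=\sum_x\rk\MH^x_{k,\ell}$, use Theorems~\ref{lpart} and~\ref{otherpart} to see that a vertex contributes exactly $(\deg x)\dl_{k,\ell}$ unless a short cycle passes through it, control the rare-event contribution by Cauchy--Schwarz, and concentrate the edge count. Where you genuinely diverge is in the moment control for the local rank. The paper bounds $\rk\MH^x_{k,\ell}(G)\le\binom{\ell-1}{k-1}(\max_{y}\deg y)^\ell$ (Lemma~\ref{lem:rkBound}), a bound through the \emph{global} quantity $\max_y\deg y$, and controls it with a Chernoff bound on the maximum degree, obtaining $\E[\rk(\MH^x_{k,\ell})^2]=O((\log n)^{2\ell})$ (Lemma~\ref{lem:llrkBound}) and hence a per-vertex error of order $n^{-1/2}(\log n)^\ell$. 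You instead observe that every vertex appearing in a generator of $\MC^x_{k,\ell}$ lies in the ball $B_\ell(x)$, so $\rk\MH^x_{k,\ell}\le|B_\ell(x)|^\ell$ --- a genuinely \emph{local} bound --- and dominate the exploration from $x$ by a Galton--Watson tree with $\bin(n,cn^{-1})$ offspring to get $\E[|B_\ell(x)|^{2\ell}]=O(1)$ uniformly in $n$, giving a clean $O(n^{-1/2})$ per vertex with no logarithms. Your route trades the paper's elementary Chernoff estimate for the standard but slightly heavier branching-process domination; both are more than sufficient, and yours has the advantage of making explicit that the argument only uses information in a bounded neighborhood of $x$. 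The remaining steps --- the exact identity $\rk\MH_{\ell,\ell}(G)=2\#E(G)+\sum_{x:\,\gir_x(G)\le4}(\rk\MH^x_{\ell,\ell}(G)-\deg x)$, the $L^2$-concentration $2\#E(G_{n,p})/n\to c$, and Markov's inequality to pass from $L^1$ to probability --- match the paper up to bookkeeping.
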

\begin{rem}
Theorem~\ref{thm:main3} immediately implies that for any vertex $x$ in $G_{n,p}$,
\[
\lim_{n\to\infty}\E[\rk(\MH_{k,\ell}^x(G_{n,c/n}))]=c\dl_{k,\ell}.
\]
Note that the value $c$ appearing here coincides with the limit of the expected degree of $x$ in $G_{n,c/n}$. This means that $\E[\rk(\MH_{k,\ell}^x(G_{n,p}))]$ and $\E[(\deg x)\dl_{k,\ell}]$ are asymptotically equal.
On the other hand, it is shown in~\cite{HW17} that $\rk(\MH_{k,\ell}^x(T))=(\deg x)\dl_{k, \ell}$ for any tree $T$ and its vertex $x$. Therefore, $\E[\rk(\MH_{k,\ell}^x(G_{n,p}))]$ and $\rk(\MH_{k,\ell}^x(T))$ depend only on the degree of $x$ asymptotically.
This property is compatible with the fact that $G_{n,c/n}$ has locally tree-like structure.
\end{rem}

The magnitude $\#G(q)$ of a graph $G$, which takes value in the formal power seriese $\Z[\![q]\!]$, is determined by the magnitude homology of $G$ (cf.~\cite[Theorem~2.8]{HW17}):
\[
\#G(q)=\sum_{\ell=0}^\infty\Biggl(\sum_{k=0}^\ell(-1)^k\rk(\MH_{k,\ell}(G))\Biggr)q^\ell.  
\]
For $\ell\ge0$, define $\chi_\ell(G)$ as the coefficient of $q^\ell$ in the above equation. Then, the following corollary of Theorem~\ref{thm:main3} immediately follows.
\begin{cor}
Let $\ell\in\N$ and $p=cn^{-1}$ for some fixed $c>0$. Let $G_{n,p}$ be an Erd\H os--R\'enyi graph with parameters $n$ and $p$. Then,
\[
\lim_{n\to\infty}\frac{\E[\chi_\ell(G_{n,p})]}n=(-1)^\ell c.
\]
Moreover, for any $\eps>0$,
\[
\lim_{n\to\infty}\P\biggl(\biggl|\frac{\chi_\ell(G_{n,p})}n-(-1)^\ell c\biggr|>\eps\biggr)=0.
\]
\end{cor}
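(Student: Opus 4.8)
The plan is to deduce the corollary directly from Theorem~\ref{thm:main3}, exploiting the fact that for fixed $\ell$ the quantity $\chi_\ell$ is a \emph{finite} alternating sum of ranks. Recall from the displayed definition that $\chi_\ell(G)=\sum_{k=0}^{\ell}(-1)^k\rk(\MH_{k,\ell}(G))$, a sum of only $\ell+1$ nonnegative integers (and since $\MH_{k,\ell}(G)=0$ for $k>\ell$, truncating at $k=\ell$ discards nothing). Everything then reduces to the two routine facts that a limit may be passed through a finite sum and that convergence in probability is preserved under finite sums.

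For the expectation statement, I would first invoke linearity of expectation to write $\E[\chi_\ell(G_{n,p})]=\sum_{k=0}^{\ell}(-1)^k\E[\rk(\MH_{k,\ell}(G_{n,p}))]$. Dividing by $n$ and letting $n\to\infty$, the limit commutes with the finite summation, and the first assertion of Theorem~\ref{thm:main3} gives $\lim_{n\to\infty}\E[\rk(\MH_{k,\ell}(G_{n,p}))]/n=c\dl_{k,\ell}$ for each $k\in\{0,\dots,\ell\}$. Hence $\lim_{n\to\infty}\E[\chi_\ell(G_{n,p})]/n=\sum_{k=0}^{\ell}(-1)^k c\dl_{k,\ell}=(-1)^\ell c$, the only surviving term being $k=\ell$.

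For the concentration statement, I would write $\chi_\ell(G_{n,p})/n=\sum_{k=0}^{\ell}(-1)^k\bigl(\rk(\MH_{k,\ell}(G_{n,p}))/n\bigr)$. By the second assertion of Theorem~\ref{thm:main3}, for each fixed $k\in\{0,\dots,\ell\}$ one has $\rk(\MH_{k,\ell}(G_{n,p}))/n\to c\dl_{k,\ell}$ in probability. A union bound over the $\ell+1$ indices $k$ together with the triangle inequality then shows that $\chi_\ell(G_{n,p})/n\to\sum_{k=0}^{\ell}(-1)^k c\dl_{k,\ell}=(-1)^\ell c$ in probability, which is precisely the claimed statement that $\P(|\chi_\ell(G_{n,p})/n-(-1)^\ell c|>\eps)\to0$ for every $\eps>0$.

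There is no substantial obstacle: the entire content is already encapsulated in Theorem~\ref{thm:main3}, and the only point requiring care is that $\ell$ is held fixed, so the number of summands in $\chi_\ell$ stays bounded as $n\to\infty$; this is what legitimizes interchanging the limit with the summation and applying the union bound. (Were $\ell$ allowed to grow with $n$, neither step would be automatic, but that regime is outside the scope of the statement.)
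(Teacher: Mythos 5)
Your proof is correct and follows exactly the route the paper intends when it says the corollary "immediately follows" from Theorem~\ref{thm:main3}: linearity of expectation for the first limit, and the stability of convergence in probability under finite sums (via triangle inequality and union bound) for the second, with the $k=\ell$ term being the only one contributing because of the Kronecker delta. There is nothing to add or correct.
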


This article is organized as follows. In Section 2, we briefly review some basic definitions of the magnitude homology of graphs. In Section 3, we study the magnitude homology of graphs and its diagonality from a viewpoint of girth.
We use algebraic Morse theory and combinatorial arguments on graphs. Finally, in Section 4, we study the magnitude homology of Erd\H os--R\'enyi graphs using theorems obtained in Section~3 together with classical results on random graphs.

\subsection*{Acknowledgement}
The first author is supported by RIKEN Center for Advanced Intelligence Project (AIP). The second author is supported by JST CREST Mathematics (15656429), JSPS Grant-in-Aid for Scientific Research (A) (20221963), and JSPS Grant-in-Aid for Challenging Research 
(Exploratory) (19091210).
The third author is supported by JSPS KAKENHI Grant Number 19J11237.
\section{Notations for magnitude homology of graphs}
In this section, we recall some definitions of the magnitude homology of graphs.
\subsection{Graph}
A {\it finite simple undirected graph without loops} is a pair of a nonempty finite set $V$ and a collection $E$ of subsets in $V$ of cardinality two. We regard $V$ and $E$ as a vertex set and an edge set, respectively. Throughout this article, we call a finite simple undirected graph without loops just a graph.
Below, we describe some notation and terminology for a given graph $G=(V(G),E(G))$.
\begin{df}
We say that $x\in V(G)$ is {\it adjacent to} $y\in V(G)$ if $\{x,y\}\in E(G)$, and denote $x\sim y$. For $x\in V(G)$, the {\it degree} $\deg x$ indicates the number of vertices that are adjacent to $x$. 
\end{df}
\begin{df}
A tuple $(x_0,x_1,\ldots,x_k)\in V(G)^{k+1}$ is called a {\it path between} $x,y\in V(G)$ if $x_0=x$, $x_k=y$, and $x_{i-1}\sim x_i$ for all $i=1,2,\ldots,k$.
A graph $G$ is said to be {\it connected} if for any two vertices $x,y\in V(G)$, there exists a path between $x$ and $y$.
\end{df}
\begin{df}
Let $i \geq 3$. An $i$-{\it cycle} or {\it cycle} in a graph $G$ is a tuple $(x_{0}, \dots, x_{i})$ of vertices in $G$ satisfying
\begin{itemize}
\item $\{x_{k}, x_{k+1}\} \in E(G)$  for $0 \leq k \leq i-1$,
\item $x_{0} = x_{i}$,
\item $x_{0}, \dots, x_{i-1}$ are all distinct.
\end{itemize}
\end{df}
\begin{df}
A {\it tree} is a connected graph that has no cycles, while a connected graph that has exactly one cycle is called a {\it unicyclic graph}.
\end{df}
For vertices $x,y\in V(G)$, an extended metric $d(x,y)$ is defined as the length of shortest path between $x$ and $y$, and if there exist no such paths, we set $d(x,y)=\infty$.

\subsection{Magnitude homology}\label{ssec:MH}
Let $G=(V(G),E(G))$ be a graph. For a tuple $(x_0,x_1,\ldots,x_k)\in V(G)^{k+1}$, we define
\[
L(x_0,x_1,\ldots,x_k)\coloneqq\sum_{i=1}^k d(x_{i-1},x_i).
\]
Let $\ell\in\Z_{\ge0}$ be fixed, and for any $k\in\Z_{\ge0}$, we define a free $\mathbb{Z}$-module $\MC_{k,\ell}(G)$ generated by a set
\[
\{(x_0,x_1,\ldots,x_k)\in V(G)^{k+1}\mid x_0\neq x_1\neq\cdots\neq x_k,L(x_0,\ldots,x_k)=\ell\}.
\]
We note from the definition that $\MC_{k,\ell}(G)=0$ for $k>\ell$. We can decompose $\MC_{k,\ell}(G)$ into spatially localized versions as follows. For any $k\in\Z_{\ge0}$ and $x, y \in V(G)$, we define free $\mathbb{Z}$-modules $\MC_{k,\ell}^x(G)$ and $\MC_{k,\ell}^{x, y}(G)$ generated by  sets
\begin{align*}
\{(x_0,x_1,\ldots,x_k)\in V(G)^{k+1}\mid x=x_0\neq x_1\neq\cdots\neq x_k,L(x_0,\ldots,x_k)=\ell\},
\shortintertext{and}
\{(x_0,x_1,\ldots,x_k)\in V(G)^{k+1}\mid x=x_0\neq x_1\neq\cdots\neq x_k = y,L(x_0,\ldots,x_k)=\ell\},
\end{align*}
respectively. Then we have obvious decompositions
\begin{equation}\label{chaindecomp}
\MC_{k,\ell}(G) \cong \bigoplus_{x\in V(G)}\MC_{k,\ell}^x(G) \cong  \bigoplus_{x, y\in V(G)}\MC_{k,\ell}^{x, y}(G).
\end{equation}
\begin{df}
Given
\[
(x_0,\ldots,x_i,\ldots,x_k)\in \MC_{k,\ell}(G),
\]
we say that $x_i$ {\it is a smooth point of} $(x_0,\ldots,x_i,\ldots,x_k)$ if $L(x_0,\ldots,x_k)=L(x_0,\ldots,\hat x_i,\ldots,x_k)$, that is, 
 \[
 d(x_{i-1},x_{i+1})=d(x_{i-1},x_i)+d(x_i,x_{i+1}).
 \]
 Here, the hat symbol over $x_i$ indicates that this vertex is deleted from $(x_0,\ldots,x_i,\ldots,x_k)$. We say that $x_i$ {\it is a singular point of} $(x_0,\ldots,x_i,\ldots,x_k)$ if it is not a smooth point of $(x_0,\ldots,x_i,\ldots,x_k)$.
\end{df}
For $k\ge1$, the boundary map $\del_{k,\ell}(G)\colon\MC_{k,\ell}(G)\to\MC_{k-1,\ell}(G)$ is defined as the linear extension of
\[
\del_{k,\ell}(G)(x_0,\ldots,x_k)=\sum_{i=1}^{k-1}(-1)^i1_{\{x_i\text{ is smooth}\}}(x_0,\ldots,\hat x_i,\ldots,x_k)
\]
for $(x_0,\ldots,x_k)\in \MC_{k,\ell}(G)$. By convention, we also define $\MC_{-1,l}(G)=0$ and $\del_{0,l}(G)=0$. Then, it holds that $\del_{k,\ell}(G)\circ \del_{k+1,\ell}(G)=0$ for $k\ge0$, that is, $\ker\del_{k,\ell}(G)\supset\im\del_{k+1,\ell}(G)$.
The magnitude homology group $\MH_{k,\ell}(G)$ of length $\ell$ is defined by $\MH_{k,\ell}(G)\coloneqq\ker\del_{k,\ell}(G)/\im\del_{k,\ell}(G)$.

Obviously, the boundary maps are compatible with the decompositions~\eqref{chaindecomp}. Hence it induces the decompositions
\begin{equation}\label{eq:decomp}
\MH_{k,\ell}(G) \cong \bigoplus_{x\in V(G)}\MH_{k,\ell}^x(G) \cong  \bigoplus_{x, y\in V(G)}\MH_{k,\ell}^{x, y}(G).
\end{equation}
Note that, if $x$ and $y$ are adjacent, we have a tuple $(x, y, x, \dots)$ which is a homology cycle in $\MH_{\ell,\ell}^{x}(G)$.
Hence we have $\rk (\MH_{\ell, \ell}^{x}(G)) \geq \deg x$. In particular, $\rk (\MH_{\ell, \ell}(G)) \geq 2\# E(G)$ holds from Eq.~\eqref{eq:decomp}.
\begin{eg}[{\cite[Corollary~6.8]{HW17}}]\label{eg:tree}
Let $T$ be a tree, and $x\in V(T)$ be fixed. Then we have
\[
\MH_{k,\ell}^x(T)\simeq\begin{cases}
\Z, 				&k=\ell=0,\\
\Z^{\deg x}, 		&k=\ell\ge1,\\
0,	        		&k\neq\ell.
\end{cases}
\]
This is verified by using Mayer--Vietoris Theorem in~\cite[Theorem~6.6]{HW17} after checking that it is compatible with the decompositions~\eqref{eq:decomp}. Moreover, Eq.~\eqref{eq:decomp} yields
\[
\MH_{k,\ell}(T)\simeq\begin{cases}
\Z^{\#V(T)}, 		&k=\ell=0,\\
\Z^{2\#E(T)}, 		&k=\ell\ge1,\\
0,			    	&k\neq\ell.
\end{cases}
\]
\end{eg}
\begin{df}[{\cite[Definition~7.1]{HW17}}]
A graph $G$ is called {\it diagonal} if $\MH_{k, \ell}(G) = 0$ for $k\neq \ell$.
\end{df}
\begin{df}[{\cite[Definition~4.2]{Gu}}]\label{df:pawful}
A graph of diameter at most two is called {\it pawful} if any distinct vertices $x,y,z\in V(G)$ with $d(x,y)=d(y,z)=2$ and $d(z,x)=1$  have a common neighbor. Here, for $S\subset V(G)$, a vertex $w\in V(G)$ is said to be a common neighbor of $S$ if $w$ is adjacent to all the vertices in $S$.
\end{df}
\begin{eg}
Trees are diagonal, as seen in Example~\ref{eg:tree}. Join graphs, in particular complete graphs, are also diagonal~\cite[Theorem~7.5]{HW17}. Moreover, pawful graphs are diagonal~\cite[Theorem~4.4]{Gu}.
\end{eg}

\section{Girth and magnitude homology of graphs}\label{algebraicpart}
In this section, we study algebraically the magnitude homology of graphs. First in Section \ref{alg1}, we briefly review algebraic Morse theory, which is a crucial tool for the latter parts. In Sections~\ref{alg2} and~\ref{alg3}, we compute the $(\ell - i, \ell)$-part $\MH_{\ell - i, \ell}(G)$ of magnitude homology for a general graph $G$ and for some $0 \leq i \leq \ell -1$. In Section~\ref{alg4}, we give a criterion for graphs to be diagonal. All the main results proved in this section, especially Theorems~\ref{otherpart} and~\ref{nondiag}, will be key lemmas for the probabilistic study of magnitude homology in Section \ref{sec:proofs}.

\subsection{Algebraic Morse Theory}\label{alg1}
For our computation, we use algebraic Morse theory studied in~\cite{Sk}. The matching that we construct is quite similar to that of Gu's (\cite{Gu}), while he constructs matchings for several special graphs in~\cite{Gu}.  In this subsection, we briefly review the algebraic Morse theory. It is almost the same instruction as in~\cite{Gu}, and see~\cite{Sk} for the detail.

Let $C_{\ast} = (C_{\ast}, \partial_{\ast})$ be a chain complex of finite rank free $\mathbb{Z}$-modules. We set
\[
C_{k} = \bigoplus_{\alpha \in I_{k}} C_{k, \alpha} \cong \bigoplus_{\alpha \in I_{k}}\mathbb{Z}
\]
for each $k \geq 0$. We denote differentials restricted to each component as
\[
f_{\beta \alpha}\colon C_{k+1, \alpha} \hookrightarrow C_{k+1} \xrightarrow{\partial_{k+1}}  C_{k} \twoheadrightarrow C_{k, \beta}.
\]
Let $\Gamma_{C_{\ast}}$ be a directed graph whose vertex set is $\coprod_{k} I_{k}$, and directed edges are $\{\alpha \to \beta \mid f_{\beta \alpha} \neq 0\}$. Recall that a {\it matching}
of a directed graph is a subset $M$ of the edge set such that any two distinct edges in $M$ have no common vertices. For a matching $M$ of $\Gamma_{C_{\ast}}$, we define a new directed graph $\Gamma_{C_{\ast}}^{M}$ by inverting the direction of all edges in $M$.
\begin{df}\label{morsematching}
The matching $M$ is called {\it Morse matching} if the directed graph $\Gamma_{C_{\ast}}^{M}$ is acyclic, and all homomorphisms of the form
\[
f_{\beta \alpha}\colon C_{k+1, \alpha} \hookrightarrow C_{k+1} \xrightarrow{\partial_{k+1}}  C_{k} \twoheadrightarrow C_{k, \beta}
\]
corresponding to the edges in $M$ are isomorphisms. 
 \end{df}
 Here we remark that $\Gamma_{C_{\ast}}^{M}$ is acyclic if and only if there are no closed paths in $\Gamma_{C_{\ast}}^{M}$ of the form
\[
 a_{1}  \longrightarrow b_{1} \longrightarrow \dots \longrightarrow b_{p-1} \longrightarrow a_{p}=a_1
\]
with $a_{i} \in C_{k+1}$ and $b_{i}\in C_{k}$ for some $k$.
 \begin{thm}[\cite{Sk}]\label{morsefact}
 For a Morse matching $M$, the chain complex $C_{\ast}$ is homotopy equivalent to the chain complex $\mathring C_{\ast}$ defined as follows$:$ Let $\mathring{I}_{k}$ be a subset of $I_{k}$ which consists of vertices contained in no edges in $M$. We define 
 \[
 \mathring{C}_{k} = \bigoplus_{\alpha \in \mathring{I}_{k}} C_{k, \alpha}
 \]
 for each $k \geq 0$. For each $\alpha \in \mathring{I}_{k}$ and $\beta \in \mathring{I}_{k-1}$, let $\Gamma_{\alpha, \beta}^{M}$ be the set of paths in $\Gamma_{C_{\ast}}^{M}$ connecting $\alpha$ and $\beta$ in this order. For $\gamma \in \Gamma_{\alpha, \beta}^{M}$, we define $\mathring{\partial}_{k}\colon C_{k, \alpha} \to C_{k-1, \beta}$ as
 \[
 \mathring{\partial}_{\gamma} = (-1)^{i/2}f_{\beta v_{i}}\circ f_{v_{i} v_{i-1}}^{-1} \circ \dots \circ f_{v_{2} v_{1}}^{-1}\circ f_{v_{1}\alpha},
 \]
where $\gamma = (\alpha \to v_{1} \to \dots \to v_{i} \to \beta)$.  Then the differential $\mathring{\partial}_{k}$ restricted on $C_{k, \alpha}$ for $\alpha \in \mathring{I}_{k}$ is defined as 
\[
\mathring{\partial}_{k}|_{C_{k, \alpha}} = \sum_{\beta \in \mathring{I}_{k-1}, \gamma \in \Gamma_{\alpha, \beta}^{M}} \mathring{\partial}_{\gamma}.
\]
In particular, we have $\mathring{\partial}_{k} = 0$ if the original differential $\partial_{k}$ vanishes on $\mathring{I}_{k}$.
 \end{thm}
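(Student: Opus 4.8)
\emph{Proof strategy.} This is the structure theorem of algebraic discrete Morse theory, and the plan is to exhibit $\mathring C_*$ as a deformation retract of $C_*$ by ``cancelling'', simultaneously, all the pairs of generators joined by edges of $M$. The elementary prototype is the single-cancellation lemma for chain complexes of finite-rank free modules: if $C_k$ splits off a rank-one summand $\langle a\rangle$ and $C_{k-1}$ splits off $\langle b\rangle$ so that the component $\langle a\rangle\to\langle b\rangle$ of $\partial$ is an isomorphism $\phi$, then $C_*$ is chain homotopy equivalent to the complex with $\langle a\rangle$ and $\langle b\rangle$ deleted and with $\partial$ in the two adjacent degrees corrected by the zig-zag term $-\,(\partial\text{-component into }\langle b\rangle)\circ\phi^{-1}\circ(\partial\text{-component out of }\langle a\rangle)$; one proves this by an explicit change of basis making $\langle a\rangle\oplus\langle b\rangle$ a direct-summand acyclic subcomplex. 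The theorem is the simultaneous version of iterating this, and I would organize the iteration through the basic perturbation lemma.

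\emph{Key steps.} For each $k$, partition $I_k=\mathring I_k\sqcup U_k\sqcup D_k$ according to whether a generator is critical, is the source of an edge of $M$, or is its target; the matching induces bijections $\mu_k\colon U_k\xrightarrow{\ \sim\ }D_{k-1}$, and by the Morse condition each component $f_{\mu_k(\alpha)\alpha}$ of $\partial$ is an isomorphism. Equip $C_*$ with the auxiliary differential $d_0$ consisting of exactly these isomorphisms $f_{\mu_k(\alpha)\alpha}$ and nothing else; then $(C_*,d_0)$ is a direct sum of the contractible two-step complexes $C_{k,\alpha}\xrightarrow{\ \sim\ }C_{k-1,\mu_k(\alpha)}$ (one for each $\alpha\in U_k$, each $k$) together with the zero complex on the critical generators, so there is a trivial deformation retraction from $(C_*,d_0)$ onto $(\mathring C_*,0)$ whose homotopy $h$ equals $f_{\mu_k(\alpha)\alpha}^{-1}$ on each $C_{k-1,\mu_k(\alpha)}$ and is zero on all other summands. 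Now regard the remaining part $\delta\coloneqq\partial-d_0$ of the boundary map as a perturbation and invoke the basic perturbation lemma: provided $h\delta$ is locally nilpotent, it outputs a deformation retraction of $(C_*,\partial)$ onto $\mathring C_*$ equipped with the differential $\sum_{n\ge0}(-1)^n\,p\,\delta\,(h\delta)^n\,i$, where $i,p$ are the inclusion and projection between $\mathring C_*$ and $C_*$.

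\emph{Finishing, and the main obstacle.} It remains to read off the two assertions. First, $h\delta$ sends $C_k$ into $C_{k-1}$ (via $\delta$) and back into $C_k$ (via $h$), and unwinding the definitions a nonzero iterate $(h\delta)^n$ corresponds to a zig-zag in $\Gamma_{C_*}^M$ alternating forward edges (components of $\delta$, i.e.\ maps of the form $f$) with reversed matched edges (maps of the form $f^{-1}$); acyclicity of $\Gamma_{C_*}^M$ forbids closed zig-zags of this kind and, the modules having finite rank, bounds their length, so $(h\delta)^n=0$ for $n\gg0$ and the sum is finite. Second, expanding the sum, each summand is exactly the contribution of a directed path $\gamma=(\alpha\to v_1\to\dots\to v_i\to\beta)$ in $\Gamma_{C_*}^M$ between critical generators: such a path alternates between degrees $k$ and $k-1$, hence $i$ is even with $i/2$ reversed edges, the composite of the associated maps is $f_{\beta v_i}\circ f_{v_i v_{i-1}}^{-1}\circ\cdots\circ f_{v_1\alpha}$, and the sign the perturbation lemma attaches to $i/2$ reversals is $(-1)^{i/2}$; summing over all $\beta$ and all such $\gamma$ yields $\mathring\partial_k$. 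The final clause is immediate, since every path leaving $\alpha\in\mathring I_k$ starts with a forward edge $f_{v_1\alpha}$, a component of $\partial_k$. The genuinely technical part — for which I would defer to \cite{Sk} — is precisely this last step: verifying that acyclicity forces the nilpotency needed to apply the perturbation lemma, and pinning down its sign conventions so that the coefficient of $\gamma$ comes out exactly as $(-1)^{i/2}$.
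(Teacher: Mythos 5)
This theorem is a quotation from Sk\"oldberg~\cite{Sk}; the paper itself offers no proof of it, so there is no in-paper argument to compare your attempt against. Your sketch via the basic perturbation lemma is a correct and standard route to the result, though it differs from the proof in \cite{Sk}, where the chain equivalence $C_*\to\mathring C_*$, its quasi-inverse, and a contracting homotopy are written down directly by closed formulae (sums over directed paths in $\Gamma_{C_*}^M$) and the chain-map and homotopy identities are then verified by hand. The perturbation-lemma route you take is more structural: it makes it immediate that the Morse differential is a sum over zig-zag paths alternating components of $\partial$ with inverses of matched components, and explains the sign $(-1)^{i/2}$ as the sign of the $n$th term of the series with $n=i/2$ reversals. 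The only point I would tighten is the nilpotency step: the clean argument is that $h\delta$ preserves homological degree, so a nonzero iterate $(h\delta)^n$ applied to $C_{k,\alpha}$ is supported on zig-zag paths whose vertices all lie in the finite set $I_k\sqcup I_{k-1}$; a sufficiently long such path must revisit a vertex and hence contain a closed zig-zag, contradicting acyclicity of $\Gamma_{C_*}^M$, so $(h\delta)^n=0$ for large $n$. With that clarification, and once the sign conventions of the particular form of the perturbation lemma you invoke are pinned down against the stated $(-1)^{i/2}$, the sketch is sound.
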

 \subsection{Computation for diagonal part}\label{alg2}
 In this subsection, we study the diagonal part ($(\ell, \ell)$-part) of magnitude homology. In the following, we assume that $\ell \geq 1$ unless otherwise noted.
 We first recall the definition of the local girth of a graph at a fixed vertex, as seen in the introduction.
\begin{df}Let $G$ be a graph and $x \in V(G)$ be a vertex. We define the {\it local girth of} $G$ {\it at} $x$ by 
\begin{align*}
\gir_x(G)\coloneqq\inf\{i\ge3\mid\text{ there exists an $i$-cycle in $G$ containing $x$}\}.
\end{align*}
We also deine the {\it girth  of} $G$ by 
\[
\gir(G) \coloneqq \min_{x} \gir_{x}(G)
\]
\end{df}
Our subject in this subsection is to prove Theorem~\ref{lpart}. We use the algebraic Morse theory for the proof. Let us consider a  truncated chain complex
\[
0 \longrightarrow \MC^{x}_{\ell, \ell}(G) \longrightarrow \MC^{x}_{\ell -1, \ell}(G) \longrightarrow 0
\]
and denote it by $C_{\ast}$. It is easy to see that the first homology of $C_{\ast}$ is isomorphic to $\MH^{x}_{\ell, \ell}(G)$. 
For graphs that  have neither $3$- nor $4$-cycles containing $x$ as their vertex, we give a Morse matching to $C_{\ast}$.
In the following, we give a Morse matching to $C_\ast$ with $\gir_{x}(G)\geq 5$.
\begin{lem}\label{backandforth}
Let $\ell \geq 1$ and $i \geq 1$. Let $G$ be a graph with $\gir_{x}(G)\geq 4$ for a vertex $x \in V(G)$.
Let
\[
(x = x_{0}, \dots, x_{\ell}) \in \MC^{x}_{\ell, \ell}(G)
\]
be a chain, and suppose that $x_{j}$ is its singular point for $0 \leq j \leq i-1$. Then $x_{i} \in \{x_{0}, x_{1}\}$.
\end{lem}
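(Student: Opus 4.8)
The plan is to prove the statement by induction on $i$, the case $i=1$ being vacuous since $x_1\in\{x_0,x_1\}$ always holds. Before starting the induction I would record two elementary facts about the chain $(x=x_0,\dots,x_\ell)$. First, $L(x_0,\dots,x_\ell)=\ell$ is a sum of $\ell$ terms $d(x_{j-1},x_j)$, each at least $1$ because $x_{j-1}\neq x_j$; hence every term equals $1$ and consecutive vertices of the chain are adjacent. Second, for an interior index $1\le j\le\ell-1$ we have $d(x_{j-1},x_j)+d(x_j,x_{j+1})=2$ and $d(x_{j-1},x_{j+1})\le 2$ by the triangle inequality, so $x_j$ is singular exactly when $d(x_{j-1},x_{j+1})\le 1$, that is, when $x_{j-1}=x_{j+1}$ or $x_{j-1}\sim x_{j+1}$. (For $j=0$ singularity is automatic and will not be used, and I only ever need to consider $i\le\ell$, so that $x_i$ exists and $x_{i-1}$ is an interior vertex when $i\ge 2$.)

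For the inductive step, fix $i\ge 2$ and assume the statement for all smaller values. The hypothesis that $x_0,\dots,x_{i-1}$ are singular in particular implies the corresponding hypotheses with $i-1$ and with $i-2$ in place of $i$, so the induction gives $x_{i-1}\in\{x_0,x_1\}$ and $x_{i-2}\in\{x_0,x_1\}$ (for $i=2$ the latter is just $x_{i-2}=x_0$). Since $x_{i-2}\neq x_{i-1}$ and the set $\{x_0,x_1\}=\{x,x_1\}$ has exactly two elements, we conclude $\{x_{i-2},x_{i-1}\}=\{x,x_1\}$. Now I would invoke singularity of the interior vertex $x_{i-1}$: by the second preliminary fact it forces $d(x_{i-2},x_i)\le 1$, hence either $x_{i-2}=x_i$ or $x_{i-2}\sim x_i$.

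In the first case $x_i=x_{i-2}\in\{x_0,x_1\}$ and we are done. In the second case $x_i$ is adjacent to $x_{i-2}$, and, being the successor of $x_{i-1}$ in the chain, also adjacent to $x_{i-1}$; since $\{x_{i-2},x_{i-1}\}=\{x,x_1\}$, this means $x_i$ is adjacent to both $x$ and $x_1$. As $x=x_0\sim x_1$ already holds, if $x_i$ were distinct from both $x$ and $x_1$ then $(x,x_1,x_i,x)$ would be a $3$-cycle containing $x$, contradicting $\gir_x(G)\ge 4$. Therefore $x_i\in\{x,x_1\}=\{x_0,x_1\}$, completing the induction.

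The argument is short, and I expect no substantive obstacle: the only thing demanding care is the index bookkeeping — making sure that singularity is only invoked for genuine interior vertices $x_j$ with $1\le j\le\ell-1$ (so the reformulation via $d(x_{j-1},x_{j+1})$ applies) and that the two applications of the induction hypothesis are legitimate. The mathematical content is simply that a singular interior vertex makes the walk fold back onto $\{x,x_1\}$, and the hypothesis $\gir_x(G)\ge 4$ rules out the sole competing possibility, namely closing a triangle through the basepoint $x$.
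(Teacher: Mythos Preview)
Your proof is correct and follows essentially the same approach as the paper: induction on $i$, using the fact that all consecutive distances equal $1$ to translate singularity of $x_{i-1}$ into $d(x_{i-2},x_i)\le 1$, and invoking $\gir_x(G)\ge 4$ to rule out the triangle $(x_{i-2},x_{i-1},x_i)$ through $x$. Your write-up is slightly more careful about index bookkeeping (noting that singularity is only invoked at interior vertices and that the $j=0$ hypothesis is vacuous), but the argument is the same.
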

\begin{proof}
We prove by induction on $i$. For $i = 1$, the statement is trivially true. Suppose that $x_{j}$ is singular for $0 \leq j \leq i-1$ and $x_{j} \in \{x_{0}, x_{1}\}$ for $0 \leq j \leq i-1$. Then we have $\{ x_{i-2}, x_{i-1}\} = \{x_{0}, x_{1}\}$ because $x_{i-2} \neq  x_{i-1}$. Note here that we have $d(x_{k}, x_{k+1}) =1$ for $0 \leq k \leq \ell-1$ by the definition of $\MC_{\ell, \ell}(G)$. Then by the assumption that $x_{i-1}$ is a singular point, we have $d(x_{i-2}, x_{i}) \leq 1$. If we have $d(x_{i-2}, x_{i}) = 1$, then these three points $x_{i-2}, x_{i-1}, x_{i}$ form a $3$-cycle containing $x$ because $x_{i-2}$ or $x_{i-1}$ coincides with $x$, which is not the case (see Figure~\ref{triangle}). Hence we obtain that $d(x_{i-2}, x_{i}) = 0$, which implies that $x_{i} = x_{i-2} \in \{ x_{0}, x_{1}\}$.
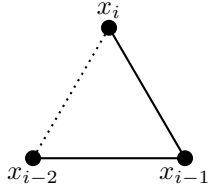
\begin{figure}[H]
\centering
\begin{tikzpicture}
\coordinate (a) at (0, 0) node [below] at (a) {$x_{i-2}$};
\coordinate (b) at (2, 0) node [below] at (b) {$x_{i-1}$};
\coordinate (c) at (1, 1.73) node [above] at (c) {$x_{i}$};
\fill (a) circle (3pt);
\fill (b) circle (3pt);
\fill (c) circle (3pt);
\draw[thick] (a) -- (b);
\draw[thick] (b) -- (c);
\draw[thick][dotted] (a) -- (c);
\end{tikzpicture}
\caption{An illustration of a $3$-cycle containing $x$ in the case that $d(x_{i-2},x_i) = 1$.}
\label{triangle}
\end{figure}
\end{proof}
Let $T_{\ell}$ be a subset of generators in $\MC^{x}_{\ell, \ell}(G)$ defined as
\[
T_{\ell} = \left\{ (x_{0}, \dots, x_{\ell}) \in \MC^{x}_{\ell, \ell}(G) \mid\text{some $x_{i}$'s are smooth for $0\leq i \leq \ell$}\right\}.
\]
Whenever $T_\ell\neq\emptyset$, we define a map $f_{\ell}\colon T_{\ell} \longrightarrow \MC^{x}_{\ell-1, \ell}(G)$ by deleting the first smooth point, that is,
\[
f_{\ell}(x_{0}, \dots, x_{\ell}) = (x_{0}, \dots, \hat{x}_{i}, \dots, x_{\ell}),
\]
where $x_{j}$ is a singular point of $(x_{0}, \dots, x_{\ell})$ for $0 \leq j \leq i-1$, and $x_{i}$ is its smooth point.
\begin{lem}\label{finj}
If $\gir_{x}(G) \geq  5$, the above map $f_{\ell}$ is injective.
\end{lem}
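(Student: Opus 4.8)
The plan is to argue by contradiction. Suppose two generators $\bm x=(x_0,\dots,x_\ell)$ and $\bm y=(y_0,\dots,y_\ell)$ of $T_\ell$ satisfy $f_\ell(\bm x)=f_\ell(\bm y)$, and let $i$ and $j$ be the indices of the first smooth point of $\bm x$ and of $\bm y$, respectively; without loss of generality $i\le j$. Since every consecutive pair in a chain of $\MC^x_{\ell,\ell}(G)$ is at distance exactly $1$ and an endpoint is never smooth, we have $1\le i\le j\le\ell-1$. Writing $f_\ell(\bm x)=(x_0,\dots,x_{i-1},x_{i+1},\dots,x_\ell)$ and $f_\ell(\bm y)=(y_0,\dots,y_{j-1},y_{j+1},\dots,y_\ell)$ and comparing them coordinate by coordinate, I would first record the three elementary consequences: $x_k=y_k$ for all $k<i$; $x_{i+1}=y_i$ in the case $i<j$; and $x_k=y_k$ for all $k\ne i$ in the case $i=j$.

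Next I would show $i=j$. Assume $i<j$, so $x_{i+1}=y_i$ and in particular $x_i\ne y_i$. If $i\ge2$, Lemma~\ref{backandforth} applied to $\bm x$ (resp.\ to $\bm y$, whose first $j\ge i$ points are singular) yields $\{x_{i-1},x_i\}=\{x_0,x_1\}$ and $\{y_{i-1},y_i\}=\{y_0,y_1\}$; since $x_0=y_0$, $x_1=y_1$ and $x_{i-1}=y_{i-1}$ (all these indices are $<i$) and $x_i\ne x_{i-1}$, $y_i\ne y_{i-1}$, we get $x_i=y_i$, a contradiction. If $i=1$, then $y_1=x_2$ is adjacent to $y_0=x_0$, whereas $x_1$ being smooth forces $d(x_0,x_2)=2$ and hence $x_2\not\sim x_0$, again a contradiction. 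Therefore $i=j$.

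It remains to treat the case $i=j$, where $x_k=y_k$ for every $k\ne i$ and I must show $x_i=y_i$. If $i\ge2$ this follows exactly as above from Lemma~\ref{backandforth}. The remaining case $i=1$ is where the hypothesis $\gir_x(G)\ge5$ (rather than merely $\ge4$) is used: suppose $x_1\ne y_1$; then $x_1$ and $y_1$ are two distinct common neighbours of $x_0=y_0$ and of $x_2=y_2$, and smoothness of $x_1$ gives $d(x_0,x_2)=2$, so $x_0\ne x_2$. Using the defining inequalities of the two chains one checks that $x_0,x_1,x_2,y_1$ are four distinct vertices, so $(x_0,x_1,x_2,y_1,x_0)$ is a $4$-cycle containing $x=x_0$, contradicting $\gir_x(G)\ge5$. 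Hence $x_1=y_1$, and in every case $\bm x=\bm y$, which proves injectivity.

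The main obstacle is precisely this last case $i=1$: there Lemma~\ref{backandforth} is vacuous, so the ``back and forth'' structure cannot be invoked to identify the deleted vertex, and one must instead produce a genuine short cycle through $x$ by hand; this is also the only point at which $\gir_x(G)\ge5$ is strictly needed, every other part of the argument going through already for $\gir_x(G)\ge4$.
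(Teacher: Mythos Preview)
Your proof is correct and follows the same overall skeleton as the paper's: reduce to $i=j$, then for $i=j\ge2$ invoke Lemma~\ref{backandforth}, and for $i=j=1$ build a $4$-cycle through $x$. The one noteworthy difference is in how you obtain $i=j$. The paper does this in one line by a purely metric observation: in the image $(x_0,\dots,x_{i-1},x_{i+1},\dots,x_\ell)$ every consecutive pair is at distance $1$ except the pair $(x_{i-1},x_{i+1})$, which is at distance $2$; the same holds for the image of $\bm y$, so the position of the unique distance-$2$ gap forces $i=j$. You instead rule out $i<j$ by a case split, using Lemma~\ref{backandforth} again for $i\ge2$ and a direct adjacency contradiction for $i=1$. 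Both arguments are valid; the paper's is shorter and avoids invoking Lemma~\ref{backandforth} for this step, while yours makes the role of the back-and-forth structure more visible throughout.
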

\begin{proof}
Suppose that 
\[
f_{\ell}(x_{0}, \dots, x_{\ell}) = (x_{0}, \dots, \hat{x}_{i}, \dots, x_{\ell}) = (y_{0}, \dots, \hat{y}_{j}, \dots, y_{\ell}) = f_{\ell}(y_{0}, \dots, y_{\ell}).
\]
Then we have $d(x_{i-1}, x_{i+1}) = 2$ and $d(y_{j-1}, y_{j+1}) = 2$. Because the other pairs of adjacent points are apart from each other by distance $1$, we obtain $i = j$. If $i = j \geq 2$, then we have $x_{k}, y_{k} \in \{x_{0}, x_{1}\} = \{y_{0}, y_{1}\}$ for $0 \leq k \leq i = j$ by Lemma \ref{backandforth}. Then we obtain $x_{i} = y_{i}$ by the assumption that $x_{i -1} = y_{i-1}$, that is, $(x_{0}, \dots, x_{\ell}) =(y_{0}, \dots, y_{\ell})$. Suppose that  $i = j = 1$ and $x_{1} \neq y_{1}$. Then we have $x_{0} = y_{0}$, $x_{2} = y_{2}$, $d(x_{0}, x_{2}) = 2$, and $d(x_{0}, x_{1}) = d(x_{1}, x_{2}) = d(x_{0}, y_{1}) = d(y_{1}, x_{2}) = 1$ (see Figure~\ref{fig2}). Hence these four points form a $4$-cycle containing $x$, which is not the case. Thus we obtain $x_{1} = y_{1}$, that is, $(x_{0}, \dots, x_{\ell}) =(y_{0}, \dots, y_{\ell})$.
\begin{figure}[H]
\centering
\begin{tikzpicture}
\coordinate (a) at (0, 0) node [below] at (a) {$x_{0} = y_{0}$};
\coordinate (b) at (1, 1) node [right] at (b) {$x_{1}$};
\coordinate (c) at (-1, 1) node [left] at (c) {$y_{1}$};
\coordinate (d) at (0, 2) node [above] at (d) {$x_{2} = y_{2}$};
\fill (a) circle (3pt);
\fill (b) circle (3pt);
\fill (c) circle (3pt);
\fill (d) circle (3pt);
\draw[thick] (a) -- (b);
\draw[thick] (b) -- (d);
\draw[thick] (c) -- (d);
\draw[thick] (a) -- (c);
\end{tikzpicture}
\caption{An illustration of a $4$-cycle containing $x$.}
\label{fig2}
\end{figure}
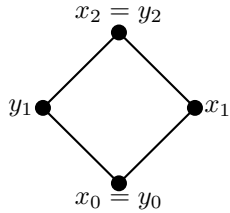

\end{proof}
By Lemma \ref{finj}, we can define a matching $M_{f_{\ell}}$ to $C_{\ast}$ by the injective map $f_{\ell}$. When $T_{\ell}$ is empty, we define the empty matching.
\begin{lem}\label{fmorse}
If $\gir_{x}(G) \geq  5$, then the above matching $M_{f_{\ell}}$ is a Morse matching.
\end{lem}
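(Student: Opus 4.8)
The plan is to verify the two defining conditions of a Morse matching from Definition~\ref{morsematching}: first, that each $f_{\beta\alpha}$ corresponding to an edge of $M_{f_\ell}$ is an isomorphism of rank-one free modules, and second, that the modified graph $\Gamma_{C_\ast}^{M_{f_\ell}}$ is acyclic. Since our truncated complex $C_\ast$ has only two nonzero levels, $\MC^x_{\ell,\ell}(G)$ in degree one and $\MC^x_{\ell-1,\ell}(G)$ in degree zero, a closed path in $\Gamma_{C_\ast}^{M_{f_\ell}}$ must alternate between these two levels, so acyclicity reduces to a concrete combinatorial statement about chains.

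For the isomorphism condition: an edge of $M_{f_\ell}$ is of the form $\alpha\to\beta$ where $\alpha=(x_0,\dots,x_\ell)\in T_\ell$, $x_i$ is its first smooth point, and $\beta=f_\ell(\alpha)=(x_0,\dots,\hat x_i,\dots,x_\ell)$. The map $f_{\beta\alpha}$ is the composite $C_{1,\alpha}\hookrightarrow\MC^x_{\ell,\ell}\xrightarrow{\del}\MC^x_{\ell-1,\ell}\twoheadrightarrow C_{0,\beta}$, which sends the generator $\alpha$ to $(-1)^i$ times $\beta$ (the coefficient $1_{\{x_i\text{ smooth}\}}$ is $1$ by construction, and no other term of $\del\alpha$ can equal $\beta$ since deleting a \emph{different} index produces a distinct tuple). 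Hence $f_{\beta\alpha}$ is multiplication by $\pm1$, an isomorphism $\Z\xrightarrow{\sim}\Z$.

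For acyclicity: I would suppose toward a contradiction that there is a closed path
\[
\alpha_1\to\beta_1\to\alpha_2\to\beta_2\to\cdots\to\beta_{p}\to\alpha_{p+1}=\alpha_1
\]
in $\Gamma_{C_\ast}^{M_{f_\ell}}$, where each $\alpha_s\to\beta_s$ is a reversed matching edge (so $\beta_s=f_\ell(\alpha_s)$, obtained by deleting the first smooth point $x_{i_s}$ of $\alpha_s$) and each $\beta_s\to\alpha_{s+1}$ is an unmatched differential edge (so $\alpha_{s+1}$ is obtained from $\beta_s$ by \emph{inserting} a vertex at some position, making that position smooth, while $\alpha_{s+1}\neq\alpha_s$). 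The strategy is to track a monotone quantity along the path that strictly decreases (or strictly increases) over one full cycle, yielding the contradiction. The natural candidate is the position of the first smooth point, or equivalently the length of the initial singular run: by Lemma~\ref{backandforth}, if $x_0,\dots,x_{i-1}$ are singular then the chain just oscillates on $\{x_0,x_1\}$ up to index $i$, so deleting the first smooth point $x_i$ and then re-inserting a smooth vertex can only occur at position $i$ or $i+1$ of $\beta_s$ — inserting earlier would require the inserted vertex to be a smooth point sitting inside the oscillating segment, which (as in the proofs of Lemmas~\ref{backandforth} and~\ref{finj}) would force a $3$- or $4$-cycle through $x$, contradicting $\gir_x(G)\ge5$. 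I would make this precise by showing that the index of the first smooth point is non-decreasing along each step of the path and strictly increases at least once, or alternatively that the reinsertion is forced to reproduce $\alpha_s$ (contradicting $\alpha_{s+1}\neq\alpha_s$), so that no nontrivial closed path can exist.

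The main obstacle I anticipate is the bookkeeping in the acyclicity argument: one must carefully rule out every way that deleting the first smooth point and then inserting a new smooth vertex could create a \emph{different} chain whose own first smooth point lies at an earlier or equal position, and this is exactly where the girth hypothesis $\gir_x(G)\ge5$ must be invoked through the short-cycle exclusions already established in Lemmas~\ref{backandforth} and~\ref{finj}. Once the monotonicity of the first-smooth-point index is established, the absence of closed paths — and hence the Morse matching property — follows immediately.
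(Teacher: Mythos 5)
Your check of the isomorphism condition is fine, and tracking the position of the first smooth point as a monotone quantity is a reasonable instinct. The genuine gap is that you have the two edge types in $\Gamma_{C_\ast}^{M_{f_\ell}}$ swapped, and the bookkeeping you set up is built on the wrong kind of step. After the matched edges are reversed, the \emph{downward} edges $\alpha\to\beta$ (from $\MC^x_{\ell,\ell}(G)$ to $\MC^x_{\ell-1,\ell}(G)$) are precisely the \emph{unmatched} ones, each deleting a smooth point that is \emph{not} the first one, while the \emph{upward} edges $\beta\to\alpha'$ are the reversed matching edges, so $\beta=f_\ell(\alpha')$. You write $\beta_s=f_\ell(\alpha_s)$ and call $\alpha_s\to\beta_s$ a reversed matching edge, but that matched edge points upward in $\Gamma_{C_\ast}^{M_{f_\ell}}$; the step you then analyze (delete the first smooth point, reinsert ``at position $i$ or $i+1$'') simply does not occur in the graph, so the monotonicity you would like to extract is not about the transitions that actually exist.

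Once corrected, the claim you need is the one the paper proves, and it is sharper than monotonicity: if $\beta$ is obtained from $\alpha\in\MC^x_{\ell,\ell}(G)$ by deleting a smooth point $x_i$ that is \emph{not} the first one, then $\beta$ is not in the image of $f_\ell$ at all, so $\beta$ has no outgoing edge in $\Gamma_{C_\ast}^{M_{f_\ell}}$ and a directed cycle is impossible. To see this, suppose $\beta=f_\ell(\alpha')$. The gap-position argument from Lemma~\ref{finj} forces the inserted vertex of $\alpha'$ to sit at position $i$, so $\alpha'$ agrees with $\alpha$ at positions $0,\dots,i-1$, and these must all be singular points of $\alpha'$. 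Lemma~\ref{backandforth} then shows the inserted vertex equals $x_{i-2}$, and adjacency of consecutive entries of $\alpha'$ gives $x_{i-2}\sim x_{i+1}$, producing a $3$- or $4$-cycle through $x$ unless $x_{i-1}=x_{i+1}$; the latter contradicts the smoothness of $x_i$ in $\alpha$, and the former contradicts $\gir_x(G)\ge5$. Your fallback (``the reinsertion is forced to reproduce $\alpha_s$'') is also off: the reinsertion is forced to be $x_{i-2}$, not $x_i$, and the contradiction is that no admissible reinsertion exists, not that it recovers $\alpha$.
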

\begin{proof}
Let 
\[
(x_{0}, \dots, \hat{x}_{i}, \dots, x_{\ell}) \in \MC^{x}_{\ell-1, \ell}(G),
\]
where $x_{i}$ is a smooth point of the tuple 
\[
(x_{0},\dots, x_{\ell})\in \MC^{x}_{\ell, \ell}(G),
\]
but not the first one. Note that $i \geq 2$. We show that the tuple $(x_{0}, \dots, \hat{x}_{i}, \dots, x_{\ell})$ is not in the image of $f_\ell$, which implies that the directed graph $\Gamma^{M_{f_\ell}}_{C_{\ast}}$ is acyclic. Suppose that $f_{\ell}(y_{0}, \dots, y_{\ell}) = (x_{0}, \dots, \hat{x}_{i}, \dots, x_{\ell})$, and let $y_{j}$ be the first smooth point of the tuple $(y_{0}, \dots, y_{\ell})$. Then we have 
\[
(y_{0}, \dots, \hat{y}_{j}, \dots, y_{\ell}) = (x_{0}, \dots, \hat{x}_{i}, \dots, x_{\ell}),
\]
hence we have $i = j$ by the same argument in the proof of Lemma \ref{finj}. 
Then we also have $x_{i} \neq y_{j}$. Because $y_{j}$ is the first smooth point, we have $\{y_{0}, \dots, y_{j}\} = \{y_{0}, y_{1}\}$ by Lemma \ref{backandforth}. By the assumption that $x_{k} = y_{k}$ for $0 \leq k \leq i-1$, we obtain that $y_{i} = y_{i-2} = x_{i-2}$. Because $y_{i}$ is adjacent to $y_{i+1} = x_{i+1}$, we have $d(x_{i-2}, x_{i+1}) = d(x_{i+1}, x_{i}) = d(x_{i}, x_{i-1}) = d(x_{i-1}, x_{i-2}) = 1$ (see Figure~\ref{fig3}). Then there is a $3$- or  $4$-cycle containing an edge $\{y_{i-2}, y_{i-1}\} = \{y_{0}, y_{1}\} = \{x_{0}, x_{1}\}$ unless we have $x_{i-1} = x_{i+1}$. The former case contradicts that $\gir_x(G)\ge5$. The latter case contradicts the fact that $x_{i}$ is a smooth point of $(x_0,\ldots,x_\ell)\in\MC^x_{\ell,\ell}(G)$.
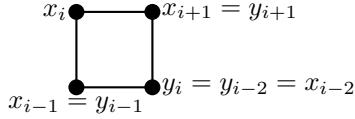
\begin{figure}[H]
\centering
\begin{tikzpicture}
\coordinate (a) at (0, 0) node [below] at (a) {$x_{i-1} = y_{i-1}$};
\coordinate (b) at (0, 1) node [left] at (b) {$x_{i}$};
\coordinate (c) at (1, 0) node [right] at (c) {$y_{i} = y_{i-2} = x_{i-2}$};
\coordinate (d) at (1, 1) node [right] at (d) {$x_{i+1} = y_{i+1}$};
\fill (a) circle (3pt);
\fill (b) circle (3pt);
\fill (c) circle (3pt);
\fill (d) circle (3pt);
\draw[thick] (a) -- (b);
\draw[thick] (b) -- (d);
\draw[thick] (c) -- (d);
\draw[thick] (a) -- (c);
\end{tikzpicture}
\caption{An illustration of a $3$- or $4$-cycle containing $x$ in the case that $x_{i-1}\neq x_{i+1}$. A $3$-cycle appears when $x_i$ and $y_i$ are adjacent, otherwise a $4$-cycle appears.}
\label{fig3}
\end{figure}

\end{proof}
\begin{proof}[Proof of Theorem \ref{lpart}]
By Lemma \ref{fmorse}, the chain complex $C_{\ast}$ is homotopy equivalent to the chain complex generated by the unmatched generators of the Morse matching $M_{f_\ell}$.
The unmatched generators in $\MC^{x}_{\ell, \ell}(G)$ are exactly the tuples that have only singular points, and by Lemma \ref{backandforth}, they are of the form $(x, y, x, y, \dots)$, where $y$ is adjacent to $x$. Because the differential of $\MC^{x}_{\ast, \ell}(G)$ vanishes on these generators, $\MH^{x}_{\ell, \ell}(G)$ is isomorphic to a free module generated by the tuples of the form $(x,y,x,y,\ldots)$. This completes the proof.
\end{proof}
\subsection{Computation for 
non-diagonal part}\label{alg3}
We extend our matching constructed above to a larger part of magnitude chain complex. For a tuple $(x_{0}, \dots, x_{n}) \in V(G)^{n+1}$, we call $(x_{g}, x_{g+1})$ {\it a gap} if $d(x_{g}, x_{g+1}) \geq 2$, and we call it {\it the first gap} if additionally $d(x_{j}, x_{j+1}) = 1$ for $0\leq j \leq g-1$. For $0\leq i \leq \ell-1$, let $T_{\ell - i}$ be a subset of $\MC^{x}_{\ell-i, \ell}(G)$ defined as
\[
T_{\ell-i}\coloneqq
\left\{(x_0,\ldots,x_{\ell-i})\in \MC^x_{\ell-i,\ell}(G)\left|
\begin{array}{l}
\text{some $x_j$'s are smooth point for $1\le j\le g-1$,}\\
\text{where $(x_g,x_{g+1})$ is the first gap}
\end{array}
\right.\right\}
\]
for $i \geq 1$, and the subset $T_{\ell}$ defined in the previous subsection for $i = 0$. We simply say that $x_{j}$ {\it is the first smooth point before the first gap of} $(x_{0}, \dots, x_{\ell-i})$ if $x_{j}$ with $1\leq j \leq g-1$ is a smooth point and $x_k$'s are singular points for $0 \leq k \leq  j-1$, where $(x_g, x_{g+1})$ is the first gap. For $i = 0$, we mean just the first smooth point. Whenever $T_{\ell-i}\neq\emptyset$, we define a map 
\[
f_{\ell - i}\colon T_{\ell - i} \longrightarrow \MC^{x}_{\ell-i-1, \ell}(G)
\]
by deleting the first smooth point before the first gap, that is,
\[
f_{\ell-i}(x_{0}, \dots, x_{\ell - i}) = (x_{0}, \dots, \hat{x}_{j}, \dots, x_{\ell - i}),
\]
where $x_{j}$ is the first smooth point of $(x_{0}, \dots, x_{\ell-i})$ before the first gap. Note that our definition of $T_{\ell - i}$'s and $f_{\ell - i}$'s contain those of $f_{\ell}$ and $T_{\ell}$ defined in the previous subsection, respectively, by considering $i=0$.
The image of the map $f_{\ell -i}$ is disjoint from the subset $T_{\ell - i -1}$ for $0 \leq i \leq \ell - 1$ since the deletion of a point by  $f_{\ell -i}$ makes a new first gap before which there exists no smooth points.
\begin{lem}\label{fiinj}
If $\gir_{x}(G) \geq  5$, then $f_{\ell - i}$ is injective for $0 \leq i \leq \ell -1$.
\end{lem}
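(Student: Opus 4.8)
The plan is to mimic the proof of Lemma~\ref{finj} almost verbatim, the only new ingredient being that we must now track the position of the first gap rather than assuming there is none. Suppose
\[
f_{\ell-i}(x_0,\dots,x_{\ell-i})=(x_0,\dots,\hat x_j,\dots,x_{\ell-i})=(y_0,\dots,\hat y_{j'},\dots,y_{\ell-i})=f_{\ell-i}(y_0,\dots,y_{\ell-i}),
\]
where $x_j$ is the first smooth point before the first gap of $(x_0,\dots,x_{\ell-i})$ and $y_{j'}$ the corresponding point of $(y_0,\dots,y_{\ell-i})$. First I would argue $j=j'$: by definition $x_k$ is singular for $0\le k\le j-1$, so each such step has $d(x_{k},x_{k+1})=1$ and $x_k\in\{x_0,x_1\}$ by Lemma~\ref{backandforth}; deleting the smooth point $x_j$ merges the two edges $x_{j-1}x_j$ and $x_jx_{j+1}$ (whose total length is at least $2$) into a single step of length $2$ in the image tuple. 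So in the common tuple $(x_0,\dots,\hat x_j,\dots,x_{\ell-i})$, position $j-1$ is the first index whose step has length $2$: the steps before it all have length $1$ (they come from the preserved singular part) and it has length exactly $2$, not more, because $x_j$ is smooth. This pins down $j$ from the tuple alone, hence $j=j'$.

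Next, with $j=j'$ fixed, I would split into the two cases exactly as in Lemma~\ref{finj}. If $j\ge2$, then $x_k,y_k\in\{x_0,x_1\}=\{y_0,y_1\}$ for $0\le k\le j$ by Lemma~\ref{backandforth} (the hypothesis $\gir_x(G)\ge4$ suffices here), and $x_{j-1}=y_{j-1}$ forces $x_j=y_j$ since both lie in the two-element set $\{x_0,x_1\}$ and differ from $x_{j-1}$; together with agreement on all other coordinates this gives $(x_0,\dots,x_{\ell-i})=(y_0,\dots,y_{\ell-i})$. If $j=1$ and $x_1\ne y_1$, then $x_0=y_0$, $x_2=y_2$, and $d(x_0,x_1)=d(x_1,x_2)=d(x_0,y_1)=d(y_1,x_2)=1$ with $d(x_0,x_2)=2$, so $(x_0,x_1,x_2,y_1)$ is a $4$-cycle through $x$, contradicting $\gir_x(G)\ge5$; hence $x_1=y_1$ and again the two tuples coincide. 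This is essentially the picture in Figure~\ref{fig2}.

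The one genuinely new subtlety — and the step I expect to need the most care — is verifying that after deleting $x_j$ the first gap of the image really does sit at position $j-1$, i.e.\ that no gap was hiding at a position $<j$ in the merged tuple and that $x_j$ being smooth guarantees $d(x_{j-1},x_{j+1})=2$ rather than something larger. The former is immediate: positions $0,\dots,j-2$ of the image are unchanged from the original, where by definition of ``first smooth point before the first gap'' all steps up to the gap have length $1$ and $j-1\le g-1$ so none of these early steps is a gap. The latter follows because smoothness of $x_j$ means $d(x_{j-1},x_{j+1})=d(x_{j-1},x_j)+d(x_j,x_{j+1})$, and $d(x_{j-1},x_j)=1$ (as $j-1\le g-1$ lies before the first gap) while $d(x_j,x_{j+1})$ is either $1$ or, if $j=g$, equal to whatever the gap length is — so I should handle separately the subcase $j=g$ and check that the merged step still lets me read off $j$ from the tuple. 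In all subcases the argument reduces to the two-element-set bookkeeping of Lemma~\ref{backandforth} plus the absence of short cycles through $x$, so once the gap-tracking is set up correctly the proof closes just as Lemma~\ref{finj} does.
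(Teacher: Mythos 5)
Your proof is correct and mirrors the paper's own (quite terse) argument, which simply observes that the positions of the first smooth point and the first gap of the two tuples must agree and then invokes the argument of Lemma~\ref{finj} on the segments before the first gap. The subcase $j=g$ you flag as needing care is in fact moot: the definition of ``the first smooth point before the first gap'' requires $1\le j\le g-1$, so $d(x_{j-1},x_j)=d(x_j,x_{j+1})=1$ automatically and the merged step in the image has length exactly $2$, letting you read off $j$ as the position of the image's first gap with no extra case analysis.
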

\begin{proof}
As shown in Lemma~\ref{finj}, $f_\ell$ is injective. Hence, we assume that $i\ge1$.
Suppose that $f_{\ell-i}(x_{0}, \dots, x_{\ell - i}) = f_{\ell - i}(y_{0}, \dots, y_{\ell - i})$. By the same argument in the proof of Lemma \ref{finj}, the positions of the first smooth point and the first gap of the both tuples are same. By looking at the parts before the first gap, the statement follows from the same argument in the proof of Lemma \ref{finj}.
\end{proof}
By Lemma \ref{fiinj}, we can define a matching $M_{f_{\ast}}$ of $\MC^{x}_{\ast, \ell}(G)$ by the injective maps $f_*=(f_{\ell - i})_{0 \leq i \leq \ell - 1}$. In the following, we assume $i$ to be in the range $0 \leq i \leq \ell - 1$ unless otherwise mentioned.
\begin{lem}\label{fimorse}
If $\gir_{x}(G) \geq  5$, then the above matching $M_{f_{\ast}}$ is a Morse matching.
\end{lem}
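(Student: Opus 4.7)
The plan is to verify the two conditions of Definition~\ref{morsematching}. The isomorphism condition is immediate from the construction: each matched map $f_{\ell-i}$ sends a generator $(x_0,\dots,x_{\ell-i})\in T_{\ell-i}$ to $\pm$ a single generator, so the projection of $\del$ onto the matched rank-one summand is $\pm\mathrm{id}_{\mathbb Z}$.

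For acyclicity of $\Gamma^{M_{f_*}}_{C_*}$, I first note that non-matched boundary edges go from $C_{\ell-i}$ to $C_{\ell-i-1}$ while reversed matched edges go from $C_{\ell-i-1}$ to $C_{\ell-i}$, so any closed path is confined to a single pair of consecutive levels and alternates down--up. Mimicking the argument of Lemma~\ref{fmorse}, acyclicity reduces to the key claim: if $(x_0,\dots,x_{\ell-i})\in T_{\ell-i}$ and $x_k$ is a smooth point that is \emph{not} the first smooth point before the first gap, then $(x_0,\dots,\hat{x}_k,\dots,x_{\ell-i})$ is not in the image of $f_{\ell-i}$.

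To prove this key claim, I argue by contradiction and suppose $f_{\ell-i}(y_0,\dots,y_{\ell-i}) = (x_0,\dots,\hat{x}_k,\dots,x_{\ell-i})$ for some $(y_0,\dots,y_{\ell-i})\in T_{\ell-i}$, and let $y_{k'}$ be the first smooth point before the first gap of $y$. Matching the positions of the first smooth and of the first gap across both representations of the $(\ell-i-1)$-tuple, exactly as in the proof of Lemma~\ref{fiinj}, forces $k'=k$, pins the first gaps of $x$ and of $y$ to a common position $g>k$, and therefore places $y_0,\dots,y_k$ inside the pre-gap (hence diagonal) initial segment of $y$. Applying Lemma~\ref{backandforth} to this segment yields $y_k=y_{k-2}=x_{k-2}$, after which the four vertices $x_{k-2},x_{k-1},x_k,x_{k+1}$ assemble, via the adjacencies supplied by the $x$- and $y$-chains, into a $3$- or $4$-cycle through $x$ (the degenerate case $x_{k-1}=x_{k+1}$ being excluded by smoothness of $x_k$). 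This contradicts $\gir_x(G)\geq 5$.

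The main obstacle is the gap bookkeeping: a priori the deleted smooth point $x_k$ could lie \emph{after} the first gap of $x$, in which case Lemma~\ref{backandforth} would not be directly available. However, $f_{\ell-i}$ only deletes points strictly before the first gap of its input, so the comparison of first-gap positions between $y$ and the given $(\ell-i-1)$-tuple forces $k$ itself to lie strictly before the first gap of $x$. Once this reduction is made, the geometric step from Lemma~\ref{fmorse} can be transcribed essentially verbatim, with the local girth hypothesis ruling out the resulting short cycle.
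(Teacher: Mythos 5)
Your verification of the isomorphism condition and the reduction of acyclicity to alternating paths between two consecutive levels are both fine. The problem is your ``key claim'': it is false for $i\ge1$, and the step you use to rescue it is exactly where the argument breaks. You assert that if $(x_0,\dots,\hat x_k,\dots,x_{\ell-i})$ lies in the image of $f_{\ell-i}$, then comparing first-gap positions forces the deleted point $x_k$ to lie strictly before the first gap of $(x_0,\dots,x_{\ell-i})$. This is not so. Suppose the first gap of $(x_0,\dots,x_{\ell-i})$ is $(x_g,x_{g+1})$ with $d(x_g,x_{g+1})=2$ and the deleted smooth point $x_k$ satisfies $k\ge g+2$. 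Then the truncated tuple still has first gap $(x_g,x_{g+1})$ of distance exactly $2$, and it \emph{can} be in the image of $f_{\ell-i}$: a preimage is obtained by inserting a vertex $y_{\mathrm{new}}$ adjacent to both $x_g$ and $x_{g+1}$ into that gap, namely $(x_0,\dots,x_g,y_{\mathrm{new}},x_{g+1},\dots,\hat x_k,\dots,x_{\ell-i})$, provided $y_{\mathrm{new}}$ is the first smooth point before the (new, later) first gap. The map $f_{\ell-i}$ deletes a point before the first gap \emph{of its own input} $y$, but $y$ is a different tuple from $x$, so nothing forces the point deleted from $x$ to sit before the first gap of $x$. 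Consequently such a down--up step does not terminate, and your dead-end argument does not close the cycle analysis.

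The paper handles precisely this residual case ($k\ge g+2$) not by showing the tuple is outside the image of $f_{\ell-i}$, but by observing that any preimage has its first gap at a strictly later position ($g'\ge g+1$) than the first gap of the tuple you started from. Hence along any alternating path $a_1\to b_1\to a_2\to\cdots$ in $\Gamma^{M_{f_*}}_{\MC^x_{*,\ell}(G)}$, the position of the first gap of the $a_p$'s is strictly monotone, which rules out closed paths. Your cases $j\le g-1$ (transcribing Lemma~\ref{fmorse}) and the trivial level bookkeeping are correct, and you would also need the easy observation that deleting $x_g$ or $x_{g+1}$ produces a first gap of distance $\ge3$ (hence outside the image); but without the monotonicity argument for $j\ge g+2$ the proof of acyclicity is incomplete.
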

\begin{proof}
Let 
\[
(x_{0}, \dots, \hat{x}_{j}, \dots, x_{\ell - i}) \in \MC^{x}_{\ell-i -1, \ell}(G),
\]
where $x_{j}$ is a smooth point of the tuple 
\[
(x_{0},\dots, x_{\ell - i})\in \MC^{x}_{\ell - i, \ell}(G),
\]
but not the first smooth point before the first gap. The case for $i = 0$ has been already considered in Lemma \ref{fmorse}, hence we assume $i \geq 1$. Let $(x_{g}, x_{g+1})$ be the first gap of the tuple $(x_{0},\dots, x_{\ell - i})$. If $j = g$ or $g + 1$, then $(x_{0}, \dots, \hat x_{j}, \dots, x_{\ell - i})$ is not in the image of $f_{\ell - i}$. It is because the first gap $(x_{g-1}, x_{g+1})$ or $(x_{g}, x_{g+2})$ of $(x_{0}, \dots, \hat x_{j}, \dots, x_{\ell - i})$ must satisfy that $d(x_{g-1}, x_{g+1}) \geq 3$ or $d(x_{g}, x_{g+2}) \geq 3$ respectively, while the first gap of an image of $f_{\ell -i}$ must have distance 2.
For the case that $j \leq g-1$, we can show that the tuple $(x_{0}, \dots, \hat x_{j}, \dots, x_{\ell - i})$ is not in the image of $f_{\ell - i}$ by the same argument in the proof of Lemma \ref{fmorse}. Hence the remained case is that $j \geq g+2$. In this case, if we have 
\[
(x_{0}, \dots, \hat{x}_{j}, \dots, x_{\ell - i}) = f_{\ell - i}(y_{0}, \dots, y_{\ell - i}),
\]
the tuple $(y_{0}, \dots, y_{\ell - i})$ must be of the form
\[
(x_{0}, \dots, x_{g}, y_{\rm new}, x_{g+1}, \dots, x_{j-1}, x_{j+1}, \dots, x_{\ell - i})
\]
with $d(x_{g}, x_{g+1}) = 2$  and $y_{\rm new}$ is the first smooth point before the first gap by the definition of $f_{\ell -i}$. Then the first gap $(y_{g'}, y_{g' + 1})$ of $(y_{0}, \dots, y_{\ell - i})$ satisfies $g' \geq g+1$. Hence there cannot be a cycle of the form 
\[
a_{1} \longrightarrow b_{1} \longrightarrow \cdots \longrightarrow a_{p}  \longrightarrow b_{p}  \longrightarrow a_{1},
\]
in $\Gamma^{M_{f_{\ast}}}_{\MC^{x}_{\ast, \ell}(G)}$ with $a_{k} \in \MC^{x}_{\ell - i, \ell}(G)$, $b_{k} \in \MC^{x}_{\ell - i - 1, \ell}(G)$ because the position of the first gap of $a_{k}$ moves backward.
This completes the proof.
\end{proof}
By Lemma \ref{fimorse}, we obtain a chain complex $(\mathring{\MC}^{x}_{\ast, \ell}(G), \mathring\partial_{\ast,\ell})$
consisting of unmatched generators by the Morse matching $M_{f_*}$, which is homotopy equivalent to the original magnitude chain complex $(\MC^{x}_{\ast, \ell}(G), \partial_{\ast,\ell})$.
The following lemma characterizes the generators of $(\mathring{\MC}^{x}_{\ast, \ell}(G), \mathring\partial_{\ast,\ell})$.
\begin{lem}\label{unmatched}
Let $\gir_{x}(G) \geq  5$. A tuple $(x_{0}, \dots, x_{\ell - i}) \in \MC^{x}_{\ell - i, \ell}(G)$ is unmatched by the matching $M_{f_{\ast}}$ if and only if  it satisfies one of the following conditions$:$
\begin{enumerate}
\item[(i)] It has no gaps and no smooth points,
\item[(ii)] It has the first gap $(x_{g}, x_{g+1})$ with $g \geq 1$ and $d(x_{g}, x_{g+1}) \geq 3$ such that there is no smooth point before the first gap, 
\item[(iii)] It has the first gap $(x_{g}, x_{g+1})$ with $g \geq 1$ and $d(x_{g}, x_{g+1}) = 2$ such that there is no smooth point before the first gap. Furthermore, every vertex $z$ adjacent to both of $x_{g}$ and $x_{g+1}$ is the second smooth point of $(x_{0}, \dots, x_{g}, z, x_{g+1}, \dots, x_{\ell - i})$,
\item[(iv)] It has the first gap $(x_{0}, x_{1})$ with $d(x_{0}, x_{1}) \geq 3$.
\end{enumerate}
\end{lem}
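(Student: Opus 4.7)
The plan is to decode the unmatched condition directly: a tuple $\tau\in\MC^x_{\ell-i,\ell}(G)$ is unmatched by $M_{f_*}$ iff both $\tau\notin T_{\ell-i}$ (so it is not the source of a match) and $\tau\notin f_{\ell-i+1}(T_{\ell-i+1})$ (so it is not the target of a match). The first clause is essentially a restatement of the definition of $T_{\ell-i}$: for $i\ge1$ it translates verbatim to ``no smooth point before the first gap'' (appearing in cases (ii), (iii), (iv), where for $g=0$ the condition is vacuous), and for $i=0$ to ``no smooth point'', which together with the automatic absence of gaps in $\MC^x_{\ell,\ell}$ gives case (i). The main work lies in analyzing the image condition for $i\ge1$.

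Suppose $\sigma=(y_0,\ldots,y_{\ell-i+1})\in T_{\ell-i+1}$ satisfies $f_{\ell-i+1}(\sigma)=\tau$, and let $y_j$ be the smooth point deleted by $f_{\ell-i+1}$. By construction $y_j$ lies strictly before the first gap of $\sigma$ (or anywhere when $i=1$ and $\sigma$ has no gaps), so the edges $(y_{j-1},y_j)$ and $(y_j,y_{j+1})$ both have distance $1$, and smoothness of $y_j$ forces $d(y_{j-1},y_{j+1})=2$. After deletion, the edges at positions $0,\ldots,j-2$ of $\tau$ still have distance $1$, while position $j-1$ becomes the new edge $(y_{j-1},y_{j+1})$ of distance $2$. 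Thus the first gap of $\tau$ must occur at position $j-1$ and have distance exactly $2$. In particular, if the first gap $(x_g,x_{g+1})$ of $\tau$ satisfies $d(x_g,x_{g+1})\ge3$, no such $\sigma$ exists; combined with the domain condition this yields case (ii) (for $g\ge1$) and case (iv) (for $g=0$).

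When $d(x_g,x_{g+1})=2$, any valid $\sigma$ must be of the form $(x_0,\ldots,x_g,z,x_{g+1},\ldots,x_{\ell-i})$ where $z$ is a common neighbor of $x_g$ and $x_{g+1}$; such $z$ exists since $d=2$, and is automatically smooth in $\sigma$. For $z$ to be the first smooth point of $\sigma$, positions $1,\ldots,g-1$ (which share neighbors with $\tau$) are automatically singular by the domain hypothesis, and the only new constraint is at position $g$ (when $g\ge1$): $y_g=x_g$ has new right neighbor $z$, and is singular in $\sigma$ iff $d(x_{g-1},z)\neq2$. Consequently, for $g\ge1$, $\tau$ is not in the image iff every common neighbor $z$ satisfies $d(x_{g-1},z)=2$, i.e.\ $x_g$ becomes smooth in $\sigma$ and $z$ is the second smooth point of $\sigma$ --- this is case (iii). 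For $g=0$ there is no position $g\ge1$ to check, so any common neighbor $z$ gives a valid match and $\tau$ is always matched; this explains why case (iv) requires $d\ge3$.

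The main obstacle will be the rigidity argument pinning the insertion position to $j-1=g$: an insertion at $j\le g$ would require subdividing a distance-$1$ edge in $\tau$, impossible since distances between distinct vertices are at least $1$; an insertion at $j\ge g+2$ would leave the first gap of $\tau$ intact as the first gap of $\sigma$, placing $y_j$ after rather than before it. A secondary subtlety is the distinction between $g=0$ and $g\ge1$ in the $d=2$ scenario, which accounts for the extra common-neighbor condition in (iii) but its absence in (iv); tracking the smoothness status of the endpoint $x_g$ of the first gap inside $\sigma$ is the crux of this distinction.
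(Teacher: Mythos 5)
Your proposal is correct and follows essentially the same route as the paper: matched means ``source in $T_{\ell-i}$ or target in the image of $f_{\ell-i+1}$,'' any preimage must arise by inserting a common neighbor $z$ into the first gap (which therefore must have distance exactly $2$), and the tuple is a genuine target precisely when $x_g$ stays singular after the insertion so that $z$ is the \emph{first} smooth point before the first gap. The paper only writes out the necessity direction and calls sufficiency straightforward, whereas you spell out both directions of the iff; the substance is identical.
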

\begin{proof}
Let $(x_{0}, \dots, x_{\ell - i}) \in \MC^{x}_{\ell - i, \ell}(G)$ satisfy none of the above conditions. We will show that $(x_{0}, \dots, x_{\ell - i})$ is matched. If there is a smooth point before the first gap, then it is in $T_{\ell -i}$, hence it is matched. Hence we can suppose that $(x_{0}, \dots, x_{\ell - i})$ has the first gap $(x_{g}, x_{g+1})$ with $g \geq 0$ and $d(x_{g}, x_{g+1}) = 2$ such that there is no smooth point before the first gap, and furthermore, there is a vertex $z$ adjacent to $x_{g}$ and $x_{g+1}$ such that $z$ is the first smooth point before the first gap of $(x_{0}, \dots, x_{g}, z, x_{g+1}, \dots, x_{\ell - i})$. Then we have
\[
f_{\ell - i+1}(x_{0}, \dots, x_{g}, z, x_{g+1}, \dots, x_{\ell - i}) = (x_{0}, \dots, x_{\ell - i}) ,
\]
hence it is matched. Therefore the above conditions are necessary to be unmatched. The sufficiency is straightforward.
\end{proof}

Now we look at the differential $\mathring{\partial}_{\ast,\ell}$ on $\mathring{\MC}^{x}_{\ast, \ell}(G)$.
\begin{lem}\label{dpath}
Let $\gir_{x}(G) \geq  5$. Let $\alpha$ be a tuple satisfying one of the conditions in Lemma \ref{unmatched}. Then there are no paths of length $\geq 2$ in $\Gamma^{M_{f_{\ast}}}_{\MC^{x}_{\ast, \ell}(G)}$ that start from $\alpha$.
\end{lem}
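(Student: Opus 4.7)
The idea is to translate ``no paths of length $\geq 2$ starting at $\alpha$'' into the image-avoidance statement: for every smooth point $x_{j}$ of $\alpha$, the tuple $\beta_{1} := (x_{0},\ldots,\hat{x}_{j},\ldots,x_{\ell-i})$ does not lie in the image of $f_{\ast}$. Indeed, a Morse path of length $2$ starting at $\alpha$ has the form $\alpha \to \beta_{1} \to v_{2}$, where the first edge is a boundary edge (so $\beta_{1}$ is as above) and the second is a reversed matching edge (requiring $\beta_{1} = f_{\ast}(v_{2}) \in \im(f_{\ast})$). Since $\alpha$ is unmatched, $\alpha$ itself has no outgoing reversed matching edge, so no length-$2$ path from $\alpha$ can start differently; longer paths share this same initial edge, so ruling out length-$2$ paths disposes of all longer paths.

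Case (i) is immediate: $\alpha$ has no smooth points, so no outgoing edges at all. For the remaining cases I would argue by contradiction, assuming $\beta_{1}=f_{\ell-i+1}(\gamma)$ for some $\gamma$. By definition of $f$, the tuple $\gamma$ is obtained from $\beta_{1}$ by inserting a vertex $z$ at some position $j'$, so that $z$ is smooth at $j'$ in $\gamma$, all positions $1,\ldots,j'-1$ of $\gamma$ are singular, and the first gap of $\gamma$ sits strictly after $j'$. I would then examine $j'$ relative to the position $g$ of the first gap of $\alpha$ and rule out each possibility.

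For cases (ii) and (iv), where the first gap of $\alpha$ has distance $\geq 3$, the argument splits into three sub-cases. ``$j'$ strictly before the (shifted) gap'' fails because the predecessor and successor of $z$ in $\gamma$ are then at distance $1$, forcing the smoothness sum to equal $1$, impossible for $z$ distinct from both. ``$j'$ at the gap'' fails because the requirement that the first gap of $\gamma$ lie strictly after $j'$ forces the two distances on either side of $z$ to each equal $1$, whereas their sum must equal the gap distance $\geq 3$. ``$j' > g$'' fails because the large gap of $\beta_{1}$ then sits before the insertion position, so $z$ cannot be before the first gap of $\gamma$. The girth assumption $\gir_{x}(G)\geq 5$ enters through Lemma~\ref{backandforth}, which constrains the alternating structure of the prefix of $\alpha$ before its first gap.

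Case (iii) is the main obstacle. Here the first gap of $\alpha$ has distance exactly $2$, so the ``at the gap'' argument above fails for insertions at position $g+1$: a common neighbor $z$ of $x_{g},x_{g+1}$ genuinely satisfies $d(x_{g},z)+d(z,x_{g+1}) = 2 = d(x_{g},x_{g+1})$. The supplementary hypothesis in condition (iii) of Lemma~\ref{unmatched} is tailored exactly to this: every common neighbor $z$ is the \emph{second} smooth point of $(x_{0},\ldots,x_{g},z,x_{g+1},\ldots,x_{\ell-i})$, which is equivalent to the distance condition $d(x_{g-1},z)=2$. For $j > g+1$, deleting the smooth $x_{j}$ leaves positions $\leq g+1$ of $\gamma$ unchanged, so this same condition forces $x_{g}$ to be smooth in $\gamma$ at position $g$, and hence the inserted $z$ at position $g+1$ is only the \emph{second} smooth point, not the first. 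For $j \in \{g,g+1\}$, the first gap of $\beta_{1}$ acquires distance $\geq 3$, and the arguments of cases (ii) and (iv) apply directly. Insertions at positions other than $g+1$ are ruled out as in cases (ii) and (iv).
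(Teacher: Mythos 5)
Your proposal is correct and follows essentially the same route as the paper: reduce to showing that each one-step boundary image of $\alpha$ lies outside $\im(f_\ast)$, observe that tuples in the image of $f_\ast$ must have first gap of distance exactly $2$ (which disposes of cases (i), (ii), (iv) and the deletions at positions $g,g+1$ in case (iii)), and invoke the ``second smooth point'' clause of condition (iii) for the one remaining sub-case. The only blemish is a harmless index slip ($\beta_1\in\im(f_{\ell-i})$, not $\im(f_{\ell-i+1})$); otherwise your sub-case analysis is a slightly more explicit version of the paper's argument.
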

\begin{proof}
Note that there exist no directed edges $\a\longrightarrow\beta$ such that $\alpha \in \MC^{x}_{\ell - i, \ell}(G)$, $\beta \in \MC^{x}_{\ell - i + 1, \ell}(G)$ by Lemma~\ref{unmatched}.
Hence, let $\alpha \longrightarrow \beta$ be a directed edge in $\Gamma^{M_{f_{\ast}}}_{\MC^{x}_{\ast, \ell}(G)}$ with $\alpha \in \MC^{x}_{\ell - i, \ell}(G)$, $\beta \in \MC^{x}_{\ell - i - 1, \ell}(G)$.
In order that this directed edge is extended to a path of length $2$, $\beta$ must be in the image of $f_{\ell - i}$. Note that, in order to be in the image of $f_{\ell - i}$, $\beta$ must have the first gap with distance exactly $2$. Hence $\alpha$ and  $\beta$ must be of the form
\begin{align*}
\alpha &= (x_{0}, \dots, x_{g}, x_{g+1}, \dots,x_k,\ldots, x_{\ell-i}), \\
\beta &= (x_{0}, \dots, x_{g}, x_{g+1}, \dots, \hat{x}_{k}, \dots, x_{\ell - i}),
\end{align*}
where $(x_g,x_{g+1})$ is the first gap of $\a$ and $\beta$ with $g\geq 0$, $d(x_{g}, x_{g+1}) = 2$, and $g+2 \leq k \leq \ell - i - 1$. Further, $\alpha$ must satisfy (iii) of Lemma \ref{unmatched} by the assumption. Hence every vertex $y$ adjacent to both of $x_{g}$ and $x_{g+1}$ is the second smooth point of the tuple 
\[
(x_{0}, \dots, x_{g}, y, x_{g+1}, \dots, \hat x_{k}, \dots, x_{\ell - i}),
\]
which implies that $\beta$ cannot be in the image of $f_{\ell - i}$. Hence the statement follows.
\end{proof}
We obtain the following by Lemma \ref{dpath} and Theorem \ref{morsefact}.
\begin{lem}\label{isom}
Let $\gir_{x}(G) \geq  5$. The differentials on $\mathring{\MC}^{x}_{\ast, \ell}(G)$ are restrictions of those on~$\MC^{x}_{\ast, \ell}(G)$.
\end{lem}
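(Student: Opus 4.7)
The plan is to combine Theorem~\ref{morsefact} with Lemma~\ref{dpath}. By Theorem~\ref{morsefact}, for each unmatched generator $\alpha\in\mathring{\MC}^{x}_{k,\ell}(G)$ and each unmatched $\beta\in\mathring{\MC}^{x}_{k-1,\ell}(G)$, the coefficient of $\beta$ in $\mathring{\partial}_{k,\ell}(\alpha)$ is the signed sum of the contributions $\mathring{\partial}_\gamma$ over all directed paths $\gamma$ from $\alpha$ to $\beta$ in $\Gamma^{M_{f_{\ast}}}_{\MC^{x}_{\ast,\ell}(G)}$. In particular, the single-edge paths $\alpha\to\beta$ contribute exactly the component $f_{\beta\alpha}$ of the original differential $\partial_{k,\ell}$.

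First, I would invoke Lemma~\ref{dpath}, which rules out every path of length $\geq 2$ starting at an unmatched $\alpha$; consequently, only length-one paths survive in the sum, and $\mathring{\partial}_{k,\ell}(\alpha)$ agrees with the projection of $\partial_{k,\ell}(\alpha)$ onto $\mathring{\MC}^{x}_{k-1,\ell}(G)$. It remains to argue that this projection is redundant, i.e.\ that $\partial_{k,\ell}(\alpha)$ already lies in $\mathring{\MC}^{x}_{k-1,\ell}(G)$. If $\partial_{k,\ell}(\alpha)$ had a nonzero component at some matched $\beta$, then $\beta$ would admit a reversed outgoing edge to its Morse partner in degree $k$, producing a path $\alpha\to\beta\to(\text{partner})$ of length two and contradicting Lemma~\ref{dpath}. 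Hence every nonzero component of $\partial_{k,\ell}(\alpha)$ is located at an unmatched $\beta$, and so $\mathring{\partial}_{k,\ell}=\partial_{k,\ell}|_{\mathring{\MC}^{x}_{k,\ell}(G)}$.

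The substantive combinatorial content has already been absorbed into the proof of Lemma~\ref{dpath}, so I do not anticipate further obstacles; the present statement is essentially a clean packaging of that structural result together with the algebraic Morse theory formalism recalled in Section~\ref{alg1}.
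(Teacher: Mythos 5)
Your first two steps are precisely the paper's proof, which consists of the single remark that the lemma follows from Lemma~\ref{dpath} and Theorem~\ref{morsefact}: since no path of length $\ge 2$ in $\Gamma^{M_{f_{\ast}}}_{\MC^{x}_{\ast,\ell}(G)}$ starts at an unmatched generator, the sum in Theorem~\ref{morsefact} collapses to the single-edge contributions $f_{\beta\alpha}$ with sign $(-1)^{0/2}=+1$, so the matrix of $\mathring{\partial}$ is the submatrix of $\partial$ indexed by unmatched generators. That is all the lemma asserts and all that is used later; in Lemma~\ref{hjmorse} it is invoked only to say that the components of $\mathring{\partial}$ between unmatched tuples agree with those of $\partial$.

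Your third step --- showing that $\partial(\alpha)$ has no component at any matched $\beta$, so that the projection onto $\mathring{\MC}^{x}_{k-1,\ell}(G)$ is vacuous --- is not needed for the statement, and as written it has a gap. A generator $\beta$ in degree $k-1$ can be matched in two ways: as the target $f_{\ell-i}(\sigma)$ of a matching edge coming down from degree $k$, or as the source of a matching edge $\beta\to f_{\ell-i-1}(\beta)$ going down to degree $k-2$ (i.e.\ $\beta\in T_{\ell-i-1}$, possessing a smooth point before its first gap). Your argument produces the forbidden length-two path $\alpha\to\beta\to\sigma$ only in the first case; in the second case the reversed matching edge points \emph{into} $\beta$ from degree $k-2$, no upward edge leaves $\beta$, and Lemma~\ref{dpath} yields nothing. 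The second case can in fact be excluded directly: any smooth point of an unmatched $\alpha$ sits at position $\ge g$, so deleting it leaves the all-singular segment before the first gap intact, and the resulting tuple is never in $T_{\ell-i-1}$. If you retain this extra step, you must supply that argument rather than appeal to Lemma~\ref{dpath}; otherwise simply drop it and read ``restriction'' as restriction of both the domain and the codomain to the unmatched generators.
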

Now we further construct a Morse matching for $(\mathring{\MC}^{x}_{\ast, \ell}(G), \mathring\partial_{\ast,\ell})$. Before that, we study some properties of the unmatched tuples of the matching $M_{f_{\ast}}$ by the following three lemmas.
\begin{lem}\label{smoothsing}
Suppose that $\gir_{x}(G) \geq 5$. Let
\[
(x_{0}, \dots, x_{g}, x_{g+1}, \dots, x_{\ell - i}) \in \mathring{\MC}^{x}_{\ell - i, \ell}(G),
\]
which satisfies the condition (ii) or (iii) in Lemma \ref{unmatched} with the first gap $(x_{g}, x_{g+1})$, $g\geq 1$. If $x_{g}$ is its smooth point, then $x_{g-1}$ is a singular point of the tuple $(x_{0}, \dots, x_{g-1}, \hat{x}_{g}, x_{g+1}, \dots, x_{\ell - i})$.
\end{lem}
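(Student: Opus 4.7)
The plan is to derive a contradiction by combining Lemma~\ref{backandforth} with the assumed smoothness of $x_g$. I shall treat only the substantive case $g\geq 2$; when $g=1$ the putative conclusion refers to the endpoint $x_0$, for which smoothness is not defined, and no argument is needed.

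First I would observe that, since $(x_g,x_{g+1})$ is the first gap, $d(x_{j-1},x_j)=1$ for every $1\leq j\leq g$, so the initial segment $(x_0,x_1,\dots,x_g)$ lies in $\MC^x_{g,g}(G)$. By hypothesis (ii) or (iii), none of $x_1,\dots,x_{g-1}$ is smooth in the ambient tuple, and smoothness depends only on the three consecutive vertices around the point in question, so these vertices remain singular when viewed inside the initial segment. Applying Lemma~\ref{backandforth} (whose hypothesis $\gir_x(G)\geq 4$ is implied by $\gir_x(G)\geq 5$) to this initial segment, I conclude $x_j\in\{x_0,x_1\}$ for every $0\leq j\leq g$. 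Since consecutive vertices must differ and only two values are available, the sequence alternates; in particular $x_{g-2}=x_g$.

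Next I would rewrite the hypothesis that $x_g$ is smooth in the original tuple as the distance identity
\[
d(x_{g-1},x_{g+1})=d(x_{g-1},x_g)+d(x_g,x_{g+1})=1+d(x_g,x_{g+1}).
\]
Suppose for contradiction that $x_{g-1}$ is smooth in the shortened tuple $(x_0,\dots,x_{g-1},x_{g+1},\dots,x_{\ell-i})$. Then smoothness of $x_{g-1}$ there, combined with $d(x_{g-2},x_{g-1})=1$, yields
\[
d(x_{g-2},x_{g+1})=d(x_{g-2},x_{g-1})+d(x_{g-1},x_{g+1})=1+\bigl(1+d(x_g,x_{g+1})\bigr)=2+d(x_g,x_{g+1}).
\]
Substituting $x_{g-2}=x_g$ on the left-hand side forces $d(x_g,x_{g+1})=2+d(x_g,x_{g+1})$, which is absurd. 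Hence $x_{g-1}$ must be singular in the shortened tuple.

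The principal lever of the argument is the identification $x_{g-2}=x_g$ supplied by Lemma~\ref{backandforth}; once this is in place, the rest is a short manipulation of the triangle-equality characterisation of smoothness. The one point to be careful about is the bookkeeping of which tuple one is working in when invoking smoothness, since the relevant distances are identical but the adjacent positions shift after deletion of $x_g$.
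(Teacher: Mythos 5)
Your proof is correct and follows essentially the same route as the paper: both arguments use Lemma~\ref{backandforth} on the alternating initial segment to get $x_{g-2}=x_g$ and then combine this with the triangle-equality form of the smoothness of $x_g$ to rule out smoothness of $x_{g-1}$ in the shortened tuple (the paper states the resulting strict inequality directly rather than by contradiction). Your explicit handling of the $g=1$ edge case and of why singularity transfers to the initial segment is a harmless refinement of what the paper leaves implicit.
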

\begin{proof}
By Lemma \ref{backandforth}, we have $x_{2m} = x_{2m+2}$ and $x_{2m+1} = x_{2m+3}$ for $0 \leq 2m \leq 2m+3 \leq g$. Since $x_{g}$ is a smooth point, we have that $d(x_{g-1}, x_{g+1}) = d(x_{g-1}, x_{g}) + d(x_{g}, x_{g+1})$. Then we have that 
\[
d(x_{g-2} = x_{g}, x_{g-1}) + d(x_{g-1}, x_{g+1}) = d(x_{g}, x_{g+1}) + 2d(x_{g-1}, x_{g}) > d(x_{g-2}=x_{g}, x_{g+1}).
\]
Hence $x_{g-1}$ is a singular point of the tuple $(x_{0}, \dots, x_{g-2}, x_{g-1}, \hat{x}_{g}, x_{g+1}, \dots, x_{\ell - i})$.
\end{proof}

\begin{lem}\label{distancetwocase}
Let 
\[
(x_{0}, \dots, x_{g}, x_{g+1}, \dots, x_{\ell - i}) \in \mathring{\MC}^{x}_{\ell - i, \ell}(G),
\]
which satisfies the condition (iii) in Lemma \ref{unmatched} with the first gap $(x_{g}, x_{g+1})$. If $\gir_{x}(G) > 5$, then $x_{g}$ is a smooth point of $(x_0,\ldots,x_g,x_{g+1},\ldots,x_{\ell-i})$.
\end{lem}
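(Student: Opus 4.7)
The plan is to argue by contradiction. Suppose $x_g$ is singular in the given tuple, so $d(x_{g-1}, x_{g+1}) \leq 2$. Since the first gap is $(x_g, x_{g+1})$ with $d(x_g, x_{g+1}) = 2$ and none of $x_1, \ldots, x_{g-1}$ is smooth, Lemma~\ref{backandforth} applies to the prefix $x_0, \ldots, x_g$ and forces this prefix to alternate between $x_0 = x$ and $x_1$; in particular exactly one of $x_{g-1}, x_g$ equals $x$.

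I would then dispatch the easy subcases $d(x_{g-1}, x_{g+1}) \in \{0, 1\}$. The value $0$ is impossible since $x_{g-1} = x_{g+1}$ would force $d(x_g, x_{g+1}) = d(x_g, x_{g-1}) = 1$, contradicting the assumption that $(x_g, x_{g+1})$ is a gap. The value $1$ is ruled out by condition (iii): in that case $x_{g-1}$ itself is a common neighbor of $x_g$ and $x_{g+1}$, so applying (iii) with $z = x_{g-1}$ would require $d(x_{g-1}, x_{g-1}) = 2$, which is absurd.

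The main case, in which the strict girth hypothesis $\gir_x(G) > 5$ enters, is $d(x_{g-1}, x_{g+1}) = 2$. Here the plan is to pick a common neighbor $z$ of $x_g$ and $x_{g+1}$ and a common neighbor $w$ of $x_{g-1}$ and $x_{g+1}$ (both exist since the respective distances equal $2$) and then to exhibit the closed walk
\[
x_{g-1} \sim x_g \sim z \sim x_{g+1} \sim w \sim x_{g-1}.
\]
Because one of $x_{g-1}, x_g$ is $x$, this closed walk passes through $x$, so once it is shown to be a genuine $5$-cycle the hypothesis $\gir_x(G) > 5$ is violated, giving the contradiction.

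The hard part is verifying distinctness of these five vertices. The only nonobvious collisions are $z = x_{g-1}$ (which would force $d(x_{g-1}, x_{g+1}) \leq 1$), $w = x_g$ (which would force $d(x_g, x_{g+1}) \leq 1$), and $z = w$. The last case is the delicate one: if $z = w$ then $z$ is simultaneously adjacent to $x_{g-1}, x_g$, and $x_{g+1}$, and combined with the edge $x_{g-1} \sim x_g$ this yields a triangle through whichever of $x_{g-1}, x_g$ coincides with $x$, contradicting even the weaker hypothesis $\gir_x(G) \geq 5$. With distinctness established, the $5$-cycle through $x$ closes the argument.
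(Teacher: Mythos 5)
Your proof is correct and follows essentially the same route as the paper's: assume $x_g$ is singular, split on $d(x_{g-1},x_{g+1})\in\{0,1,2\}$, and in the main case build the $5$-cycle on $x_{g-1},x_g,z,x_{g+1},w$ through $x\in\{x_{g-1},x_g\}$ to contradict $\gir_x(G)>5$. The only immaterial differences are that the paper disposes of the case $d(x_{g-1},x_{g+1})=1$ by exhibiting a $4$-cycle rather than by feeding $z=x_{g-1}$ into condition (iii), and it derives $z\neq x_{g-1}$ and $z\neq w$ from the identity $d(x_{g-1},z)=2$ supplied by (iii) instead of your (equally valid) triangle argument.
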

\begin{proof}
Note that $x \in \{x_{g-1}, x_{g}\} $ by the same argument as that in Lemma \ref{backandforth}. Assume that $x_{g}$ is a singular point of $(x_0,\ldots,x_g,x_{g+1},\ldots,x_{\ell-i})$. Let $z$ be a vertex adjacent to $x_{g}$ and $x_{g+1}$. Then we have $d(x_{g-1}, z) = d(x_{g-1}, x_{g}) + d(x_{g}, z) = 2$ so that it satisfies (iii) of Lemma \ref{unmatched}. Hence we have $x_{g-1} \neq z$. Since $x_{g}$ is a singular point of $(x_0,\ldots,x_g,x_{g+1},\ldots,x_{\ell-i})$, we have
\[
d(x_{g-1}, x_{g+1}) < d(x_{g-1}, x_{g}) + d(x_{g}, x_{g+1}) = 3.
\]
If $d(x_{g-1}, x_{g+1}) = 2$, then there exists a $5$-cycle containing $x$ because the point adjacent to $x_{g-1}$ and $x_{g+1}$ do not coincide with $x_{g}$ or $z$. This contradicts the assumption.
If $d(x_{g-1}, x_{g+1}) = 1$, then there exists a $4$-cycle containing $x$. Further, we have $d(x_{g-1},x_{g+1})\neq0$ because $d(x_g,x_{g-1})=1$ and $d(x_g,x_{g+1})=2$. Therefore, we conclude that $x_{g}$ can never be a singular point.
\end{proof}

\begin{lem}\label{singsmooth}
Let $i\ge1$. Let 
\[
(x_{0}, \dots, x_{g}, x_{g+1}, \dots, x_{\ell - i}) \in \mathring{\MC}^{x}_{\ell - i, \ell}(G),
\]
which satisfies the condition (ii) or (iv) in Lemma~\ref{unmatched} with the first gap $(x_{g}, x_{g+1})$, $g\geq 0$. Suppose that  $x_{g}$ is a singular point of $(x_0,\ldots,x_g,x_{g+1},\ldots,x_{\ell-i})$. If $\gir_{x}(G) \geq 2i + 4$, then
\[
(x_{0}, \dots, x_{g}, y, x_{g+1}, \dots, x_{\ell - i}) \in \mathring{\MC}^{x}_{\ell - i + 1, \ell}(G),
\]
where $y$ is taken as $x_{g-1}$ for $g \geq 1$ and as an arbitrary vertex adjacent to $x_0$ that lies in a shortest path connecting $x_0$ and $x_1$ for $g = 0$.
\end{lem}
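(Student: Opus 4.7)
The plan is to show that $\beta \coloneqq (x_0, \ldots, x_g, y, x_{g+1}, \ldots, x_{\ell-i})$ lies in $\mathring{\MC}^{x}_{\ell-i+1,\ell}(G)$, which amounts to verifying (a) $\beta$ is a valid magnitude chain of total length $\ell$ and (b) $\beta$ satisfies one of the conditions (i)--(iv) of Lemma~\ref{unmatched}. The two prescriptions for $y$ correspond to the two regimes $g \ge 1$ (where $\alpha$ is in case (ii) and $y = x_{g-1}$) and $g = 0$ (where $\alpha$ is in case (iv) and $y$ lies on a shortest $x_0$--$x_1$ path), and I would handle them in parallel.

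The only non-trivial ingredient for (a) is total-length preservation in case (ii), which reduces to showing $d(x_{g-1}, x_{g+1}) = m - 1$ where $m \coloneqq d(x_g, x_{g+1}) \ge 3$; for case (iv) the choice of $y$ makes preservation automatic. The singularity of $x_g$ combined with the triangle inequality pins $d(x_{g-1}, x_{g+1})$ to $\{m - 1, m\}$, so the main obstacle is excluding the value $m$. I would first bound $m \le i + 1$ using $\ell = g + m + \sum_{j > g} d(x_j, x_{j+1}) \ge g + m + (\ell - i - g - 1)$. Assuming $d(x_{g-1}, x_{g+1}) = m$ toward contradiction, I would concatenate a shortest path $P_1$ from $x_g$ to $x_{g+1}$ with the reverse of a shortest path $P_2$ from $x_{g-1}$ to $x_{g+1}$ and the edge $x_{g-1} x_g$, obtaining a closed walk $W$ of length $2m + 1$. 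Lemma~\ref{backandforth}, applied to the diagonal-like sub-tuple $(x_0, \ldots, x_g)$ (valid since $\gir_{x}(G) \ge 2i + 4 \ge 4$), gives $\{x_{g-1}, x_g\} = \{x, x_1\}$, so $W$ passes through $x$. The shortest-path assumption together with $d(x_{g-1}, x_{g+1}) = m$ forces $x_g \notin P_2$ and $x_{g-1} \notin P_1$. A case analysis---$P_1$ and $P_2$ internally disjoint ($W$ itself is a simple cycle of length $2m + 1$) versus sharing an interior vertex $u_j = v_k$ at the smallest such $j$ (giving a simple cycle of length $j + k + 1 \le 2m - 1$)---then produces a cycle through $x$ of length at most $2m + 1 \le 2i + 3 < 2i + 4 \le \gir_{x}(G)$, the desired contradiction.

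Once length preservation is in hand, consecutive distinctness of $\beta$ follows from $d(y, x_{g+1}) \ge 2$. For (b), the insertion shifts the first gap of $\beta$ to position $g + 1$ in case (ii) (new distance $m - 1$) or to position $1$ in case (iv) (new distance $d(x_0, x_1) - 1$); positions $1, \ldots, g-1$ retain their singularity from $\alpha$, and in case (ii) the vertex $x_g$ at position $g$ of $\beta$ is singular because both of its neighbors in $\beta$ are $x_{g-1}$, so that $d(x_{g-1}, x_{g-1}) = 0 \ne 2$. Hence $\beta$ satisfies condition (ii) of Lemma~\ref{unmatched} when the new gap distance is $\ge 3$ and condition (iii) when it equals $2$. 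In the latter subcase the ``second smooth point'' requirement reduces to showing $d(x_g, z') = 2$ for any $z'$ adjacent to both endpoints of the new gap (and analogously $d(x_0, z') = 2$ in case (iv)); this follows directly from the triangle inequality, since $d(x_g, z') = 1$ would force $d(x_g, x_{g+1}) \le 2$, contradicting $m = 3$. The cycle-extraction step in the length-preservation argument is the only technically delicate ingredient; all other verifications are routine unpackings of the smoothness definition.
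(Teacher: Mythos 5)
Your proposal is correct and follows essentially the same route as the paper's proof: bound the gap distance $d(x_g,x_{g+1})$ by $i+1$ via the total length $\ell$, use the singularity of $x_g$ together with $\gir_x(G)\ge 2i+4$ to extract a short cycle through $x$ (which lies in $\{x_{g-1},x_g\}$ by Lemma~\ref{backandforth}), and then verify conditions (ii)/(iii) of Lemma~\ref{unmatched} for the extended tuple. The only cosmetic difference is that in the subcase $d(x_g,x_{g+1})=3$ you rule out $d(x_g,z)=1$ by the triangle inequality through $x_{g+1}$, whereas the paper invokes the absence of $3$-cycles containing $x$.
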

\begin{proof}
Let 
\[
x_{g} = p_{0} \longrightarrow \cdots \longrightarrow p_{d(x_{g}, x_{g+1})} = x_{g+1}
\]
be a shortest path connecting $x_{g}$ and $x_{g+1}$. When $g = 0$, we can take $y = p_{1}$ so that $y$ becomes a smooth point. If we have $g \geq 1$ and $p_{1} = x_{g-1}$, then we can take $y = p_{1} = x_{g-1}$ so that $d(x_{g}, x_{g+1}) = d(x_{g}, x_{g-1}) + d(x_{g-1}, x_{g+1})$. Hence we suppose that $g \geq 1$ and $p_{1} \neq x_{g-1}$. Since $x_{g}$ is a singular point of $(x_0,\ldots,x_g,x_{g+1},\ldots,x_{\ell-i})$, there exist a shortest path 
\[
x_{g-1} = q_{0} \longrightarrow \cdots \longrightarrow q_{N} = x_{g+1}
\]
with $N < 1 + d(x_{g}, x_{g+1})$ and $q_{1} \neq x_{g}$. Let $j$ be the minimum number such that $q_{j}$ coincides with some $p_{m}$. Then 
\[
x_{g} \longrightarrow x_{g-1} = q_{0} \longrightarrow \cdots \longrightarrow q_{j} = p_{m} \longrightarrow p_{m-1} \longrightarrow p_{0} = x_{g}
\]
is a cycle of length $< 2d(x_{g}, x_{g+1}) + 2 $ because $(j,m)\neq(1,0),(0,1)$.
Note that we have $d(x_{g}, x_{g+1}) \leq i+1$ because $L(x_{0}, \dots, x_{g}, x_{g+1}, \dots, x_{\ell - i}) = \ell$. Hence the obtained cycle has length $< 2i+4$. Since $x_{0}, \dots, x_{g-1}$ are all singular points, we have $x_{g-1} = x_{0}$ or $x_{g} = x_{0}$ by Lemma \ref{backandforth}. Therefore this cycle contains $x$ as its vertex, it contradicts that $\gir_{x}(G) \geq 2i + 4$. Finally, we show that obtained tuple $(x_{0}, \dots, x_{g}, y, x_{g+1}, \dots, x_{\ell - i}) $ is unmatched by the matching $M_{f_{\ast}}$. 
\begin{itemize}
\item If $d(x_{g}, x_{g+1}) \geq 4$, then we have $d(y, x_{g+1}) \geq 3$, hence it satisfies (ii) of Lemma \ref{unmatched}. 
\item If $d(x_{g}, x_{g+1}) = 3$ and $g = 0$, then we have $d(y, x_{1}) = 2$. Let $z$ be a vertex adjacent to both of $y$ and $x_{1}$. Then we must have $d(x_{0}, z) = d(x_{0}, y) + d(y, z)$ because $x = x_{0}$ and there is no $3$-cycle containing $x$. Hence the tuple $(x_0,y,x_1,\ldots,x_{\ell-i})$ satisfies (iii) of Lemma \ref{unmatched}.
\item If $d(x_{g}, x_{g+1}) = 3$ and $g \geq 1$, then we have $d(y, x_{g+1}) = 2$ with $y = x_{g-1}$. Let $z$ be a vertex adjacent to both of $y$ and $x_{g+1}$. Then we must have $d(x_{g}, z) = d(x_{g}, y) + d(y, z)$ because either of $x_{g}$ or $y = x_{g-1}$ coincides with $x$, and there are no $3$-cycles containing $x$. Hence the tuple $(x_{0}, \dots, x_{g}, y, x_{g+1}, \dots, x_{\ell - i}) $ satisfies (iii) of Lemma \ref{unmatched}.\qedhere
\end{itemize}
\end{proof}

Now we consider the following truncated chain complex  for $i \geq 1$:
\[
0 \longrightarrow \mathring{\MC}^{x}_{\ell, \ell}(G) \longrightarrow \mathring{\MC}^{x}_{\ell -1, \ell}(G) \longrightarrow \cdots \longrightarrow \mathring{\MC}^{x}_{\ell - i - 1, \ell}(G) \longrightarrow 0.
\]
We denote this chain complex by $D_{\ast}$ in the following. Let $U_{\ell - j}$ be the subset of generators of $\mathring{\MC}^{x}_{\ell - j, \ell}(G)$ which consists of all the tuples satisfying (ii) or (iii) in Lemma \ref{unmatched} with smooth point $x_{g}$. We define maps 
\[
h_{\ell - j}\colon U_{\ell - j} \longrightarrow \mathring{\MC}^{x}_{\ell - j - 1, \ell}(G)
\]
for $1 \leq j \leq i$ by
\[
h_{\ell - j}(x_{0}, \dots, x_{g}, x_{g+1}, \dots, x_{\ell - j}) = (x_{0}, \dots, \hat{x}_{g}, x_{g+1}, \dots, x_{\ell - j}),
\]
where $(x_{g}, x_{g+1})$ is the first gap. By Lemma \ref{smoothsing}, the image of $h_{\ell - j}$ is disjoint from $U_{\ell - j - 1}$.
\begin{lem}\label{hjinj}
Let $i\ge1$. If $\gir_{x}(G) \geq 2i + 5$, then $h_{\ell -j}$ is injective for $1 \leq j \leq i$.
\end{lem}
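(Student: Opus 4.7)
The plan is to mimic the proof of Lemma~\ref{finj}: given $h_{\ell-j}(x_0,\dots,x_{\ell-j}) = h_{\ell-j}(y_0,\dots,y_{\ell-j})$ with respective first gaps $(x_g,x_{g+1})$ and $(y_{g'},y_{g'+1})$, I would first recover $g$ from the image, and then identify the deleted vertex $x_g$ itself.

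For the first step, since $x_g$ is the smooth first-gap endpoint with $d(x_g,x_{g+1}) \ge 2$, after deletion we get $d(x_{g-1},x_{g+1}) = 1 + d(x_g,x_{g+1}) \ge 3$, while the preceding consecutive distances remain $1$. Thus the image tuple has its first gap precisely at position $g-1$, and this position is intrinsic to the image. The same is true on the $y$-side, so $g = g'$, and consequently $x_k = y_k$ for every $k \ne g$.

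For the second step, I would split into two subcases. When $g \ge 2$, the prefix $(x_0,\dots,x_g)$ consists of unit-length edges and, by conditions~(ii)/(iii) of Lemma~\ref{unmatched}, all its interior vertices $x_1,\dots,x_{g-1}$ are singular. The hypothesis $\gir_x(G) \ge 2i+5 \ge 4$ is enough to invoke Lemma~\ref{backandforth} applied to this prefix, yielding $x_g = x_{g-2}$; symmetrically $y_g = y_{g-2}$; and since $x_{g-2} = y_{g-2}$, we conclude $x_g = y_g$.

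The main obstacle is the edge case $g = 1$, where Lemma~\ref{backandforth} is vacuous. Here I plan to argue by contradiction: assuming $x_1 \ne y_1$, smoothness at position $1$ forces $d(x_1,x_2) = d(y_1,x_2) = d(x_0,x_2)-1 =: D$, and the length constraint $L=\ell$ together with the $\ell-j-2$ further positive edge-distances beyond the gap gives $D \le j+1 \le i+1$. Two distinct paths from $x=x_0$ to $x_2$, one through $x_1$ and the other through $y_1$, both of length $D+1$, share endpoints but diverge at $x$, so their union contains a simple cycle through $x$ of length at most $2D+2 \le 2i+4$. This violates $\gir_x(G) \ge 2i+5$, completing the proof. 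The role of the girth bound $2i+5$—rather than the weaker bound $4$ that suffices for $g \ge 2$—is precisely to rule out this short cycle.
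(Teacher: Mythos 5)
Your proposal is correct and follows essentially the same route as the paper's proof: recover the gap position from the image (the paper defers this to the argument of Lemma~\ref{finj}, you spell it out via $d(x_{g-1},x_{g+1})=1+d(x_g,x_{g+1})\ge3$), settle $g\ge2$ by Lemma~\ref{backandforth} on the unit-distance prefix, and handle $g=1$ by producing a cycle through $x$ of length at most $2(j+2)\le 2i+4$ from the two shortest paths through $x_1$ and $y_1$. No gaps.
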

\begin{proof}
Suppose that $h_{\ell - j}(x_{0}, \dots, x_{\ell-j}) = h_{\ell - j}(y_{0}, \dots, y_{\ell-j})$. We can verify that the position of the first gaps of $(x_{0}, \dots, x_{\ell-j})$ and $(y_{0}, \dots, y_{\ell-j})$ are identical in the same manner as in Lemma~\ref{finj}. 
Then we have $x_{k} = y_{k}$ except for $k = g$, where $(x_g,x_{g+1})$ and $(y_g,y_{g+1})$ are the first gaps. Since $x_{k}$ and $y_{k}$ are singular points of $(x_{0}, \dots, x_{\ell-j})$ and $(y_{0}, \dots, y_{\ell-j})$, respectively, for $0 \leq k \leq g-1$, we have $\{x_{0}, \dots, x_{g}\} = \{x_{0}, x_{1}\}$ and $\{y_{0}, \dots, y_{g}\} = \{y_{0}, y_{1}\}$ by Lemma \ref{backandforth}. Hence we obtain $x_{g} = y_{g}$ if $g \geq 2$. Suppose that $g = 1$ and $x_{1} \neq y_{1}$. Since $x_1$ and $y_{1}$ are smooth points of $(x_{0}, \dots, x_{\ell-j})$ and $(y_{0}, \dots, y_{\ell-j})$, respectively, there exist shortest paths
\begin{align*}
x=x_{0} \longrightarrow x_{1} \longrightarrow \cdots \longrightarrow x_{2} = y_{2}
\shortintertext{and}
x=y_{0} \longrightarrow y_{1} \longrightarrow \cdots \longrightarrow x_{2} = y_{2}
\end{align*}
of length $ 1 + d(x_{1}, x_{2})=1 + d(y_{1}, y_{2}) \leq j + 2$. Then there exists a cycle of length $\leq 2(j+2) \leq 2i + 4$ containing $x$ as its vertex, which contradicts the assumption. Hence we obtain that $(x_{0}, \dots, x_{\ell-j}) = (y_{0}, \dots, y_{\ell-j})$.
\end{proof}

By Lemmas~\ref{smoothsing} and~\ref{hjinj}, we can define a matching $M_{h_{\ast}}$ of $D_{\ast}$ by injective maps $h_*=(h_{\ell - j})_{1\le j\le i}$. 
\begin{lem}\label{hjmorse}
Let $i\ge1$. If $\gir_{x}(G) \geq  2i + 5$, then the above matching $M_{h_{\ast}}$ is a Morse matching.
\end{lem}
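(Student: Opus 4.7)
The plan is to verify the two defining conditions of a Morse matching in Definition~\ref{morsematching} for $M_{h_*}$ on $D_*$: (a) each edge of the matching corresponds to an isomorphism between the relevant rank-one summands, and (b) the inverted graph $\Gamma_{D_*}^{M_{h_*}}$ is acyclic. Condition (a) is immediate, since each $h_{\ell-j}$ deletes a single smooth point (the start of the first gap) from an unmatched tuple, which by the definition of the boundary map and Lemma~\ref{isom} appears in $\mathring{\partial}$ with coefficient $\pm 1$. The substantive work is the acyclicity of $\Gamma_{D_*}^{M_{h_*}}$.

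By the criterion recalled after Definition~\ref{morsematching}, it suffices to rule out closed paths of the form
\[
a_1 \longrightarrow b_1 \longrightarrow a_2 \longrightarrow b_2 \longrightarrow \dots \longrightarrow a_p \longrightarrow b_p \longrightarrow a_1,
\]
with $a_k \in \mathring{\MC}^x_{\ell-j,\ell}(G)$ and $b_k \in \mathring{\MC}^x_{\ell-j-1,\ell}(G)$ for some fixed $1 \le j \le i$. Each $a_k \to b_k$ is a non-matching edge coming from $\mathring{\partial}$, while each $b_k \to a_{k+1}$ is an inverted matching edge, which forces $a_{k+1} \in U_{\ell-j}$ and $h_{\ell-j}(a_{k+1}) = b_k$. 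In the same spirit as the proof of Lemma~\ref{fimorse}, my plan is to rule such cycles out by tracking the position $g_k$ of the first gap of $a_k$ and showing that it strictly increases along the cycle.

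More concretely, since $a_k \in U_{\ell-j}$ satisfies (ii) or (iii) of Lemma~\ref{unmatched} with smooth point $x_{g_k}$, the entries $x_0, \dots, x_{g_k-1}$ of $a_k$ are all singular, so a non-matching forward edge must delete some smooth point at a position $m \ge g_k + 1$ (the case $m = g_k$ is precisely the matching edge). A short case analysis, splitting on $m = g_k + 1$ versus $m \ge g_k + 2$ and using the smoothness of $x_{g_k+1}$ in the former case, shows that the first gap of $b_k$ remains at position $g_k$. On the other hand, writing $a_{k+1}$ with first gap at $g_{k+1}$ and smooth point $y_{g_{k+1}}$, the tuple $b_k = h_{\ell-j}(a_{k+1})$ has first gap at position $g_{k+1} - 1$, because smoothness gives $d(y_{g_{k+1}-1}, y_{g_{k+1}+1}) = 1 + d(y_{g_{k+1}}, y_{g_{k+1}+1}) \ge 3$, merging the previous distance-$1$ step with the old gap into a single jump of distance at least $3$. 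Equating the two descriptions of the first-gap position of $b_k$ yields $g_{k+1} = g_k + 1$; iterating around the cycle then forces $g_1 = g_1 + p$, contradicting $p \ge 1$.

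The main obstacle I anticipate is the bookkeeping in the first-gap computation: one must verify that deletions at positions $m \ge g_k + 1$ do not accidentally create an earlier gap, and that $b_k$ really lives in $D_*$ (is unmatched by $M_{f_*}$) so that the edge $a_k \to b_k$ actually sits in $\Gamma_{D_*}^{M_{h_*}}$. Notably, the girth hypothesis $\gir_x(G) \ge 2i + 5$ does not enter the acyclicity argument directly; it is needed only through Lemma~\ref{hjinj}, which ensures that $h_{\ell-j}$ is injective and hence that $M_{h_*}$ is a bona fide matching in the first place.
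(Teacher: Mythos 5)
Your proposal is correct and follows essentially the same route as the paper: the isomorphism condition is discharged via Lemma~\ref{isom}, and acyclicity is obtained by tracking the index of the first gap along an alternating path and showing it strictly increases (the paper phrases this as the first gap "moving backward" toward the end of the tuple), which rules out closed cycles. Your bookkeeping, giving $g_{k+1}=g_k+1$ exactly, is just a more explicit version of the paper's inductive step.
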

\begin{proof}
By Lemma \ref{isom}, any differentials corresponding to edges in $M_{h_{\ast}}$ are isomorphisms (cf. Definition \ref{morsematching}). Let 
\[
(x_{0}, \dots, x_{g}, x_{g+1}, \dots, x_{\ell-j}) \in \mathring{\MC}^{x}_{\ell - j, \ell}(G)
\]
with the first gap $(x_{g}, x_{g+1})$, $g\geq 0$. Let 
\[
(x_{0}, \dots, x_{g}, x_{g+1}, \dots, x_{\ell-j}) = a_{1}  \longrightarrow b_{1} \longrightarrow a_{2} \longrightarrow b_{2} \longrightarrow \cdots
\]
be a path in $\Gamma^{M_{h_{\ast}}}_{D_{\ast}}$ with $a_{p} \in \mathring{\MC}^{x}_{\ell - j, \ell}(G)$ and $b_{p} \in \mathring{\MC}^{x}_{\ell - j - 1, \ell}(G)$ for $p \in \N$. Here the directed edge $a_{p} \longrightarrow b_{p}$ corresponds to a directed edge in $\Gamma^{M_{f_{\ast}}}_{C_{\ast}}$. Again by Lemma \ref{isom}, $b_1$ is obtained by deleting some smooth point of $a_1$. Hence $b_{1}$ must be of the form
\[
(x_{0}, \dots, x_{g}, x_{g+1}, \dots, \hat{x}_{k}, \dots, x_{\ell - j})
\]
with $g+1 \leq k \leq \ell - j - 1$, and $x_{g}$ must be its singular point to be in the image of $h_{\ell - j}$ by Lemma~\ref{smoothsing}.
It follows that $a_{2}$ is of the form 
\[
(x_{0}, \dots, x_{g}, y, x_{g+1}, \dots, \hat{x}_{k}, \dots, x_{\ell - j}),
\]
where $(y,x_{g+1})$ is the first gap.
Inductively, we conclude that the first gap of $a_i$ moves backward as $i$ increases. Hence there cannot be any cycle in $\Gamma^{M_{h_{\ast}}}_{D_{\ast}}$.
\end{proof}

\begin{proof}[Proof of Theorem~\ref{otherpart}]
By Lemma \ref{hjmorse}, the chain complex $D_{\ast}$ is homotopy equivalent to the chain complex consisting of all the unmatched tuples by $M_{h_{\ast}}$. By Lemma~\ref{distancetwocase}, any tuples satisfying the condition (iii) in Lemma \ref{unmatched} are matched. By Lennma~\ref{singsmooth}, any tuples satisfying the condition (ii) or (iv) in Lemma \ref{unmatched} are matched. Hence it turns out that the unmatched tuples by $M_{h_{\ast}}$ are only those satisfying the condition (i) of Lemma \ref{unmatched} except for the tuples in $\MC^{x}_{\ell - i - 1, \ell}(G)$. Hence the statement follows.
\end{proof}

\subsection{A criterion for diagonality}\label{alg4}
We devote this subsection to proving Theorem~\ref{nondiag} which gives a criterion of the diagonality of graphs. First we recall the definition of the local girth of a graph at a fixed edge, as seen in the introduction.
\begin{df}
Let $G$ be a graph and $e \in E(G)$ be an edge. We define the {\it local girth of} $G$ {\it at} $e$ by 
\begin{align*}
\gir_e(G)\coloneqq\inf\{i\ge3\mid\text{ there exists an $i$-cycle  in $G$ containing $e$ as its edge}\}.
\end{align*}
\end{df}

\begin{proof}[Proof of Theorem \ref{nondiag}]
We first prove for the case that $k$ is odd. We put $k = 2K+1$. Let $1, 2, \dots, 2K+1$ be vertices of a $(2K+1)$-cycle with $e = \{1, 2\}$. We suppose that each vertex $i$ is adjacent to vertices $i-1$ and $i+1$, where we put $0 = 2K+1$ and $2K+2 = 1$. Note that the distance between each pair of vertices of this cycle in $G$ is identical to that of the cycle graph itself. If not, there will be cycles of length $< 2K+1$ containing $e$, which contradicts the assumption. In particular, we have $d(1, K+2) = d(2, K+2) = K$. We show that the homology cycle
\[
[(1, 2, K+2)] \in \MH^{1, K+2}_{2, K+1}(G)
\]
is non-trivial. 

Assume that we have $[(1, 2, K+2)] = 0$, that is, there exist not necessarily distinct tuples $\alpha_{1}, \dots, \alpha_{n} \in \MC^{1, K+2}_{3, K+1}(G)$ and a vertex $a \in V(G)$ such that 
\[
\partial  \Big( (1, 2,a, K+2) + (-1)^{s_1} \alpha_{1} +  \dots  + (-1)^{s_{n}}\alpha_{n} \Big) = (1, 2, K+2).
\]
Here, $s_1,\ldots,s_n\in\{0,1\}$ and we set $s_0 = 0$. Note that any tuples of the form $(1, a, 2, K+2)$ do not appear in $\alpha_i$'s, because $L(1, a, 2, K+2) > K+1$. We put $\alpha_{0} = (1, 2, a, K+2)$ and $\alpha_{i} = (1, x_{i}, y_{i}, K+2)$ for $i\in\{1,\dots,n\}$. 

Now we construct a graph $A(G)$ with vertices $\{2, a, x_{1}, y_{1},  \dots, x_{n}, y_{n}\}$. We span an edge between $v, w$ if $(1, v, w, K+2) = \alpha_{i}$ or $(1, w, v, K+2) = \alpha_{i}$ for some $i$. Then we have the following lemma. In the following, we denote by $\langle v_1, \dots, v_n \rangle$ a path in a graph consisting of edges $\{v_1, v_2\}, \dots ,\{v_{n-1}, v_n\}$ in this order to make it easy to distinguish between paths and tuples. 
\begin{lem}\label{2nd}
Let $x$ be a vertex of $A(G)$ which is connected to the vertex $2$. Let $\langle1, b_{1}, \dots, x\rangle$ be a shortest path in $G$ connecting 1 and $x$. Then $b_{1} = 2$. 
\end{lem}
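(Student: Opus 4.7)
I plan to prove Lemma~\ref{2nd} by induction on the distance $m$ from $x$ to the vertex $2$ in the graph $A(G)$. For the base case $m = 0$, one has $x = 2$ and the statement is trivial. In the inductive step, I pick a neighbor $x'$ of $x$ in $A(G)$ at $A(G)$-distance $m-1$ from $2$, so by the inductive hypothesis $d(1, x') = 1 + d(2, x')$ and every shortest $1$-to-$x'$ path begins with $2$. The edge $\{x, x'\}$ of $A(G)$ originates from some $\alpha_i$ with middle pair $\{x, x'\}$, and I split into two cases depending on the ordering.

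If $\alpha_i = (1, x, x', K+2)$, the length constraint $L(\alpha_i) = K+1$ combined with the inductive hypothesis and the triangle inequality $d(2, x') + d(x', K+2) \ge d(2, K+2) = K$ forces $d(1, x) + d(x, x') = d(1, x')$. Therefore any shortest $1$-to-$x$ path extended by a shortest $x$-to-$x'$ path realizes the distance $d(1, x')$, hence is itself a shortest $1$-to-$x'$ path; by the inductive hypothesis it starts with $2$, so the original $1$-to-$x$ path does too.

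If instead $\alpha_i = (1, x', x, K+2)$, the analogous analysis, this time also invoking $d(1, K+2) = K$, yields $d(2, x) = d(2, x') + d(x', x)$ and pins $d(1, x)$ down to either $d(2, x)$ or $1 + d(2, x)$; since $K+2 \notin V(A(G))$, one has $d(2, x) \le K - 1$. Suppose some shortest $1$-to-$x$ path were to begin with $b_1 \ne 2$. Then this path avoids the edge $e$, and combined with a shortest $2$-to-$x$ path (which cannot pass through $1$ under the forced distance constraints) and with $e$, it produces a closed walk traversing $e$ of length at most $2K$, strictly less than $\gir_e(G) = 2K + 1$. Extracting a simple cycle through $e$ from this closed walk contradicts the local girth hypothesis at $e$, forcing $b_1 = 2$. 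The base case $m = 1$ is subsumed by this analysis with $x' = 2$, and the alternative form $\alpha_i = (1, x, 2, K+2)$ is ruled out immediately because the length constraint would force $d(1, x) + d(x, 2) = 1$, contradicting $x \notin \{1, 2\}$. The main obstacle will be the passage from a closed walk through $e$ to a simple cycle through $e$; I plan to handle this by selecting the shared vertex of the two shortest paths closest to $x$, which yields a simple $1$-to-$2$ subpath avoiding $e$ whose concatenation with $e$ produces a simple cycle through $e$ of length smaller than $2K+1$, the desired contradiction.
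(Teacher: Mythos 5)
Your proof is correct and follows essentially the same strategy as the paper's: an induction outward from the vertex $2$ along the auxiliary graph $A(G)$, using the length constraint $L(\alpha_i)=K+1$ together with $d(1,K+2)=d(2,K+2)=K$ to force distance identities, and producing a cycle through $e$ of length at most $2K$ whenever a shortest path from $1$ fails to begin with $2$. One small correction to your flagged technical step: to extract a simple cycle from the closed walk you should take the \emph{first} vertex of the $1$-to-$x$ path (starting from $1$) that lies on the $2$-to-$x$ path, rather than the shared vertex closest to $x$ --- the latter is $x$ itself and does not make the two subpaths internally disjoint; with that fix the argument is complete, and indeed more careful than the paper's, which does not address simplicity of the extracted cycle at all.
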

\begin{proof}
Let $\langle 2, a_{1}, a_{2}, \dots, x = a_{N} \rangle$ be a path in $A(G)$ connecting 2 and $x$. Note that $a_{1}$ satisfies that $d(1, 2) + d(2, a_{1}) + d(a_{1}, K+2) = K + 1$ because $(1, 2, a_{1}, K + 2) = \alpha_{m}$ for some $m$.
Let $\langle 1, b^{i}_{1}, \dots, a_{i}\rangle$ be a shortest path in $G$ connecting 1 and $a_i$. We show that $b^{i}_{1} = 2$ by induction on $i$. If $b^{1}_1 \neq 2$, then a closed path obtained by concatenating three paths, $\langle 1, b^{1}_1, \dots, a_1 \rangle$, a shortest path connecting $a_1$ and 2, and the edge between 2 and 1 produces a cycle containing $e$. Note here that the shortest path from $2$ to $a_{1}$ does not pass through $1$. If it goes through $1$, then we have $K+1 = d(1, 2) + d(2, a_{1}) + d(a_{1}, K+2) = 2 + d(1, a_{1}) + d(a_{1}, K +2) \geq 2 + d(1, K+2) = K+2$. Because $d(1, 2) + d(2, a_1) \leq K$, the obtained cycle is of length $\leq 2K$, which contradicts the assumption. Hence we have $b^{1}_1 = 2$.

Suppose $b^{i}_{1} = 2$ and $b^{i+1}_{1} \neq 2$. If $(1, a_{i},  a_{i+1}, K+2) = \alpha_{m}$ for some $m$, then a closed path obtained by concatenating three paths, $\langle 1, b^{i}_1, \dots, a_i\rangle$, a shortest path connecting $a_i$ and $a_{i+1}$, and $\langle a_{i+1}, \dots, b^{i+1}_1, 1\rangle $ produces a cycle containing $e$.
Note here that the shortest path from $a_i$ to $a_{i+1}$ does not pass through $1$ in the same manner as discussed above.
Because $d(1, a_{i}) + d(a_{i}, a_{i+1}) \leq K$, the obtained cycle is of length $\leq 2K$, which contradicts the assumption. Similarly, if $(1, a_{i+1},  a_{i}, K+2) = \alpha_{m}$ for some $m$, then a closed path obtained by concatenating three paths, $\langle1, b^{i}_1, \dots, a_i\rangle$, a shortest path connecting $a_i$ and $a_{i+1}$, and $\langle a_{i+1}, \dots, b^{i+1}_1, 1\rangle $ produces a cycle containing $e$. Because $d(1, a_{i+1}) + d(a_{i+1}, a_{i}) \leq K$, the obtained cycle is of length $\leq 2K$, which also contradicts the assumption. Hence we have $b^{i+1}_1 = 2$.
\end{proof}
Now we divide the collection of tuples $ \alpha_{0} = (1, 2, a, K+2), \alpha_{1}, \dots, \alpha_{n}$ into subcollections 
\[
C_{0}, \dots, C_{M}
\]
corresponding to the connected components of $A(G)$. Namely, two tuples $\alpha_{i}$ and $\alpha_{j}$ belong to the same subcollection if the corresponding edges in $A(G)$ are connected by some path. We suppose that 
\[
(1, 2, a, K+2) \in C_{0}.
\]
Then we have 
\[
\partial \Big( \sum_{i\geq 1}\sum_{\alpha_{j} \in C_{i}} (-1)^{s_{j}}\alpha_{j} \Big) = 0.
\]
If not, there exists a tuple $(1, x, K+2) \neq (1, 2, K+2)$ which appears in the left-hand side, and also in $\partial \Big( \sum_{\alpha_{j} \in C_{0}} (-1)^{s_{j}}\alpha_{j} \Big)$ with the opposite sign, because the total sum is $ (1, 2, K+2)$.
Then it implies that the vertex $x$ in $A(G)$ belongs to two distinct connected components of $A(G)$, which is a contradiction. Hence we have
\[
\partial \Big( \sum_{\alpha_{j} \in C_{0}} (-1)^{s_{j}}\alpha_{j} \Big) = (1, 2, K+2),
\]
which implies that there exists a tuple $\alpha_{m} = (1, x_{m}, y_{m}, K+2) \in C_{0}$ such that $L(1, x_{m}, K+2) = K$ or $L(1, y_{m}, K+2) = K$, because the right-hand side consists of odd terms. If $L(1, x_{m}, K+2) = K$, then a path in $G$ obtained by concatenating a shortest path connecting $1$ and $x_{m}$, and a shortest path connecting $x_{m}$ and $K+2$ is a shortest path connecting $1$ and $K+2$. Because a shortest path connecting $1$ and $x_{m}$ goes through $2$ by Lemma \ref{2nd},  we have $d(2, K+2)= d(1, K+2) - d(1, 2) = K-1$, which is not true. We also have a contradiction from the same argument for the case that $L(1, y_{m}, K+2) = K$. 
This completes a proof for the case that $k$ is odd.

Next we prove for the case that $k$ is even. We put $k = 2K$, and let $1, 2, \dots, 2K$ be vertices of $2K$-cycle with $e = \{1, 2\}$ similarly to the odd case. Note that we have $d(1, K+1) = K$. We show that the homology cycle $[(1, 2, K+1) - (1, 2K, K+1)] \in \MH^{1, K+1}_{2,K}(G)$ is non-trivial. Assume that we have $[(1, 2, K+1) - (1, 2K, K+1)] = 0$, that is, there exist tuples $\alpha_{1}, \dots, \alpha_{n} \in \MC^{1, K+1}_{3,K}(G)$ and vertices $a, b\in V(G)$ such that 
\begin{align*}
\partial  &\Big( (1, 2, a, K+1) + (-1)^{s_1}\alpha_{1} +  \dots + (-1)^{s_n}\alpha_{n} - (1, 2K, b, K+1) \Big) \\ &= (1, 2, K+1) - (1, 2K, K+1).
\end{align*}
Note that any tuples of the form $(1, a, 2, K+1)$ and $(1, b, 2K, K+1)$ do not appear in $\alpha_i$'s, because $L(1, a, 2, K+1), L(1, b, 2K, K+1) > K$. We put $\alpha_{0} = (1, 2, a, K+1)$, $\alpha_{n+1} =  (1, 2K, b, K+1)$, and $\alpha_{i} = (1, x_{i}, y_{i}, K+2)$ for $i\in\{1,\ldots,n\}$. Similarly to the odd case, we construct a graph $A(G)$ with vertices 
\[
\{2, a, x_{1}, y_{1},  \dots, x_{n}, y_{n}, 2K, b\}.
\]
Then the same statement in Lemma \ref{2nd} holds. The proof is almost the same as that of Lemma \ref{2nd} as follows.
\begin{proof}[Proof of Lemma \ref{2nd} for $k=2K$ case]
Let $\langle 2, a_{1}, a_{2}, \dots, x = a_{N} \rangle$ be a path in $A(G)$ connecting 2 and $x$. Note that $a_{1}$ satisfies that $d(1, 2) + d(2, a_{1}) + d(a_{1}, K+1) = K$ because $(1, 2, a_{1}, K + 1) = \alpha_{m}$ for some $m$. Let $\langle 1, b^{i}_{1}, \dots, a_{i}\rangle$ be a shortest path in $G$ connecting 1 and $a_i$. We show that $b^{i}_{1} = 2$ by induction on $i$. If $b^{1}_1 \neq 2$, then a closed path obtained by concatenating three paths, $\langle 1, b^{1}_1, \dots, a_1 \rangle$, a shortest path connecting $a_1$ and $2$, and the edge between $2$ and $1$ produces a cycle containing $e$. Note here that the shortest path from $2$ to $a_{1}$ does not pass through $1$. If it goes through $1$, then we have $K = d(1, 2) + d(2, a_{1}) + d(a_{1}, K+1) = 2 + d(1, a_{1}) + d(a_{1}, K + 1) \geq 2 + d(1, K+1) = K+2$. Because $d(1, 2) + d(2, a_1) \leq K-1$, the obtained cycle is of length $\leq 2K-2$, which contradicts the assumption. Hence we have $b^{1}_1 = 2$.

Suppose $b^{i}_{1} = 2$ and $b^{i+1}_{1} \neq 2$. If $(1, a_{i},  a_{i+1}, K+1) = \alpha_{m}$ for some $m$, then a closed path obtained by concatenating three paths, $\langle1, b^{i}_1, \dots, a_i\rangle$, a shortest path connecting $a_i$ and $a_{i+1}$, and $\langle a_{i+1}, \dots, b^{i+1}_1, 1\rangle$ produces a cycle containing $e$.
Note here that the shortest path from $a_i$ to $a_{i+1}$ does not pass through $1$ in the same manner as discussed above.
Because $d(1, a_{i}) + d(a_{i}, a_{i+1}) \leq K-1$, the obtained cycle is of length $\leq 2K-2$, which contradicts the assumption. Similarly, if $(1, a_{i+1},  a_{i}, K+2) = \alpha_{m}$ for some $m$, then a closed path obtained by concatenating three paths, $\langle 1, b^{i}_1, \dots, a_i \rangle $, a shortest path connecting $a_i$ and $a_{i+1}$, and $\langle a_{i+1}, \dots, b^{i+1}_1, 1 \rangle$ produces a cycle containing $e$. Because $d(1, a_{i+1}) + d(a_{i+1}, a_{i}) \leq K-1$, the obtained cycle is of length $\leq 2K-2$, which also contradicts the assumption. Hence we have $b^{i+1}_1 = 2$.
\end{proof}
Now we can show that the vertices $2$ and $b$ in $A(G)$ belong to the same connected component as follows. Divide the collection of tuples 
\[
 (1, 2, a, K+1), \alpha_{1}, \dots, \alpha_{n}, (1, 2K, b, K+1)
 \]
into subcollections $C_{0}, \dots, C_{M}$ corresponding to the connected components of $A(G)$. Suppose that $(1, 2, a, K+1) \in C_{0}$ and $(1, 2K, b, K+1) \in C_{1}$. By the same argument as that in the odd case, we have 
\[
\partial \Big( \sum_{i\geq 2}\sum_{\alpha_{j} \in C_{i}} (-1)^{s_{j}}\alpha_{j} \Big) = 0.
\]
Because $d(1, K+1) = K$, every tuple $\alpha_{i}$ has no singular points other than the end points. Hence two chains $\partial \Big( \sum_{\alpha_{j} \in C_{0}} (-1)^{s_{j}}\alpha_{j} \Big)$ and $\partial \Big( \sum_{\alpha_{j} \in C_{1}} (-1)^{s_{j}}\alpha_{j} \Big)$ must have a common term up to sign. It contradicts the disconnectedness assumption for $C_{0}$ and $C_{1}$, hence the vertices $2$ and $b$ in $A(G)$ belong to the same connected component.
Since the tuple $(1,2K,b,K+1)$ has no singular points, a path in $G$ obtained by concatenating the edge between $1$ and $2K$, and a shortest path connecting $2K$ and $b$ is a shortest path connecting $1$ and $b$. This is a contradiction because every shortest path in $G$ connecting $1$ and $b$ passes through $2$ at the first step by Lemma~\ref{2nd} for $k=2K$ case.
\end{proof}

\section{Stochastic properties of magnitude homology}\label{stochasticpart}
\label{sec:proofs}

\subsection{Phase transition of diagonality}
In this subsection, we provide the proof of Theorem~\ref{thm:main1}. We first prove Theorem~\ref{thm:main1}~(1) which follows from the fact that a.a.s. $G_{n,p}$ has no cycles whenever $p=o(n^{-1})$. In what follows, for $i\ge3$, we denote by $C_i$ the number of $i$-cycles in $G_{n,p}$.
\begin{proof}[Proof of Theorem~\ref{thm:main1}~$(1)$]
For $i\ge3$, a straightforward calculation yields
\[
\E C_i
\le\binom ni\frac{i!}{2i}p^i
\le\frac{(np)^i}{2i}.
\]
Indeed, there are $\binom ni$ ways of selecting $i$ vertices of an $i$-cycle from $n$ vertices, and to each selection, there are $i!/(2i)$ ways of choosing the edges of the $i$-cycle. Lastly, the probability that the chosen $i$ edges are included in $G_{n,p}$ is $p^i$ because of the mutual independence of edge appearance.
As seen in Example~\ref{eg:tree}, all trees, or more generally forests, are diagonal. Therefore, we have
\[
\P(G_{n,p}\text{ is non-diagonal})
\le\P\Biggl(\sum_{i=3}^\infty C_i\ge1\Biggr)
\le\sum_{k=3}^\infty\E C_i
\le\sum_{i=3}^\infty\frac{(np)^i}{2i}.
\]
In the second inequality, we use Markov's inequality. The right-hand side converges to zero as $n\to\infty$, which completes the proof. 
\end{proof}

We now turn to proving Theorem~\ref{thm:main1}~(2)~(3). For their proofs, we divide the concerned regime of $p$ into two parts:
\begin{enumerate}
\item $p=cn^{-1}$ for some $0<c<1$, \item $\liminf_{n\to\infty}np>1$ and $p=o(n^{-3/4})$.
\end{enumerate}
We then discuss the asymptotic behavior of $\P(G_{n,p}\text{ is non-diagonal})$ in each part in different ways.

For the estimate of $\P(G_{n,p}\text{ is non-diagonal})$ in part~(1), we use the following lemma which states that almost all vertices belong to tree components and that there exist no components containing more than one cycle. Let $T(G_{n,p})$ denote the number of vertices in $G_{n,p}$ belonging to some tree component.
\begin{lem}[Theorem~5.7~(ii) and Corollary~5.8 in~\cite{Bo01}]\label{lem:treeUni}
Let $p=cn^{-1}$ for some fixed $0<c<1$. Then, $\E[T(G_{n,p})]=n-O(1)$. In addition, every component is either tree or unicyclic a.a.s.
\end{lem}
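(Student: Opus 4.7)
The plan is to prove both assertions by the first-moment method, enumerating connected components by their order $k$ and \emph{excess} $s := \#E(C) - (k-1) \ge 0$. Trees correspond to $s = 0$ and unicyclic components to $s = 1$, so claim~1 reduces to bounding the expected number of vertices lying in components with $s \ge 1$ by $O(1)$, and claim~2 to showing the expected number of components with $s \ge 2$ tends to $0$.

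The key enumeration input is: by Cayley's formula (spanning tree) followed by a choice of $s$ extra edges, the number of connected graphs on $k$ labeled vertices with $k-1+s$ edges is at most $k^{k-2}\binom{\binom{k}{2}-(k-1)}{s}$. Combined with the standard union bound on induced components and the estimates $\binom{n}{k}\le n^k/k!$, $(1-p)^{k(n-k)}\le e^{-ck(1-k/n)}$, and $\binom{\binom{k}{2}}{s}\le (k^2/2)^s/s!$, this yields
\[
\E[\#\{\text{components of order }k\text{, excess }s\}] \lesssim \frac{n^{1-s}}{c}\cdot\frac{k^{k-2}}{k!}(ce^{-c})^{k}\cdot\frac{(c k^2/2)^s}{s!}\cdot e^{ck^2/n}.
\]

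For claim~1, I multiply by $k$ and sum over $s\ge 1$; the resulting sum over $s$ produces the factor $n(e^{ck^2/(2n)}-1)=O(k^2)$ in the dominant regime $k=o(\sqrt{n})$, reducing matters to showing $\sum_k k^{k+1}(ce^{-c})^k/k!<\infty$. Stirling's formula gives general term $\sim k^{3/2}c^k/\sqrt{2\pi}$, which converges because $c<1$, so $\E[n-T(G_{n,p})]=O(1)$. For claim~2, the explicit factor $n^{1-s}$ forces the expected number of components with fixed excess $s\ge 2$ to be $O(n^{1-s})$ (by the same Stirling-type convergence argument); summing over $s\ge 2$ gives $O(1/n)\to 0$, and Markov's inequality delivers the a.a.s.\ conclusion.

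The main obstacle will be the large-$k$ tail $k\gtrsim\sqrt{n}$, where the factor $e^{ck^2/n}$ in the bound above explodes and the tree-function-type approximations cease to be useful. I would handle this by invoking (or separately establishing) the classical subcritical bound that the largest component of $G_{n,c/n}$ with $c<1$ has order $O(\log n)$ a.a.s., after which all estimates above apply uniformly for $k\le C\log n$, making the tail contribution $o(1)$ and completing both claims.
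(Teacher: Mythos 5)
The paper offers no proof of this lemma; it is quoted verbatim from Bollob\'as (Theorem~5.7(ii) and Corollary~5.8 in~\cite{Bo01}), so there is nothing internal to compare against. Your proposal is essentially the standard first-moment proof of those cited results: stratify components by order $k$ and excess $s$, bound the number of connected graphs with $k-1+s$ edges by $k^{k-2}\binom{\binom k2-(k-1)}{s}$, and exploit the convergence of $\sum_k k^{a}(ce^{1-c})^k$ for $c<1$. The skeleton is sound and the displayed first-moment bound is correct (your Stirling asymptotic for the general term should read $k^{1/2}(ce^{1-c})^k/\sqrt{2\pi}$ rather than $k^{3/2}c^k/\sqrt{2\pi}$, but this does not affect convergence). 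One point needs more care than you give it: for the second claim (an a.a.s.\ statement) it is indeed enough to condition on the a.a.s.\ event that the largest component has order $O(\log n)$, but the first claim is about an \emph{expectation}, and $n-T\le n$ can be large on the exceptional event. You therefore need a quantitative tail, namely $\P(\max_v|C_v|>C\log n)=O(n^{-1})$, so that $\E[(n-T)\mathbf{1}_{\{\max_v|C_v|>C\log n\}}]\le n\cdot O(n^{-1})=O(1)$. This is available from the standard subcritical estimate $\P(|C_v|\ge k)\le e^{-\alpha k}$ with $\alpha=\alpha(c)>0$, taking $C>2/\alpha$, but you should state it in this quantitative form rather than as a bare a.a.s.\ assertion. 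With that amendment the argument is complete and is, for all practical purposes, the proof in~\cite{Bo01}.
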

The following lemma is also useful.
\begin{lem}[Corollary~4.9 in~\cite{Bo01}]\label{lem:jointDist}
Let $p=cn^{-1}$ for some fixed $c>0$. Then, for any $m\ge3$,
\[
(C_3,C_4,\ldots,C_m)\to(Z_3,Z_4,\ldots,Z_m)\quad\text{in distribution as }n\to\infty,
\]
where $\{Z_i\}_{i=3}^m$ are mutually independent random variables, and each $Z_i$ follows the Poisson distribution with parameter $c^i/(2i)$.
In other words, for any $m\ge3$ and $(a_3,a_4,\ldots,a_m)\in\Z_{\ge0}^{m-2}$,
\[
\lim_{n\to\infty}\P((C_3,C_4,\ldots,C_m)=(a_3,a_4,\ldots,a_m))
=\prod_{i=3}^m\frac{\{c^i/(2i)\}^{a_i}}{a_i!}\exp\biggl(-\frac{c^i}{2i}\biggr).
\]
\end{lem}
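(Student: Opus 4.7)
The plan is to establish this joint Poisson convergence via the classical method of factorial moments. Set $\lambda_i \coloneqq c^i/(2i)$. Since the target limit is a vector of independent Poisson random variables, it suffices to verify that for every $m \ge 3$ and every tuple $(k_3,\ldots,k_m) \in \Z_{\ge 0}^{m-2}$,
\[
\E\Biggl[\prod_{i=3}^m (C_i)_{k_i}\Biggr] \longrightarrow \prod_{i=3}^m \lambda_i^{k_i} \quad \text{as } n \to \infty,
\]
where $(X)_k \coloneqq X(X-1)\cdots(X-k+1)$ denotes the falling factorial. Convergence of all joint factorial moments to those of an independent Poisson vector is well known to imply convergence in distribution.

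The key combinatorial identity is that $(C_i)_{k_i}$ equals the number of ordered $k_i$-tuples of pairwise distinct $i$-cycles in $G_{n,p}$, so $\prod_i (C_i)_{k_i}$ counts ordered tuples of cycles in which the $k_i$ cycles of length $i$ are pairwise distinct (for each $i$ separately, but cycles of different lengths may coincide on edges). I would then partition such tuples into two classes: (A) the \emph{edge-disjoint} class, where no two cycles in the tuple share any edge, and (B) the remaining tuples, where at least two of the listed cycles share at least one edge.

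For class (A), the total edge count across the tuple equals $K \coloneqq \sum_i i k_i$, and since those edges are distinct, the probability that all appear in $G_{n,p}$ is $p^K$. The number of ordered edge-disjoint tuples is $(1+o(1))\prod_i \bigl(\binom{n}{i}\frac{i!}{2i}\bigr)^{k_i}$, because imposing edge-disjointness removes only $O(n^{-1})$ of the configurations. Plugging in $p = c/n$ and using $\binom{n}{i}\frac{i!}{2i} \sim n^i/(2i)$, the class (A) contribution converges to $\prod_i \lambda_i^{k_i}$, which is exactly the desired limit.

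The main obstacle is class (B): one must show that tuples with at least one pair of cycles sharing an edge contribute $o(1)$ in expectation. The standard route is to classify the underlying multigraph $H$ formed by the union of the cycles according to its vertex count $v(H)$ and edge count $e(H)$. Any union of cycles that is not edge-disjoint satisfies $e(H) \ge v(H)$ (it contains at least one cycle beyond a disjoint union of simple cycles), and the expected number of labelled copies of such an $H$ in $G_{n,p}$ is at most a constant times $n^{v(H)} p^{e(H)} = O(n^{v(H)-e(H)}) = O(1/n)$. Summing over the finitely many isomorphism types of unions of cycles of total length at most $\sum_i i k_i$ yields a bound of $O(1/n) \to 0$. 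Combining the two classes proves the factorial-moment convergence, and hence the joint distributional limit.
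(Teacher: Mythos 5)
The paper offers no proof of this lemma; it is quoted verbatim from Bollob\'as (Corollary~4.9 of \cite{Bo01}), and the proof there is precisely the method of joint factorial moments that you use, so your route matches the source. Your argument is correct, with one written inequality that is weaker than what your conclusion needs: in class (B) you assert only $e(H)\ge v(H)$, which gives $n^{v(H)}p^{e(H)}=O(1)$, not $O(1/n)$. What is actually true, and what your parenthetical remark is gesturing at, is that if two distinct cycles of the tuple share an edge then the component of $H$ containing them has two independent elements of its cycle space, hence $e\ge v+1$ on that component, while every other component of a union of cycles has $e\ge v$; summing gives $e(H)\ge v(H)+1$ and hence the $O(1/n)$ bound you claim. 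With that one line repaired the proof is complete: class (A) contributes $(1+o(1))\prod_i\lambda_i^{k_i}$ exactly as you compute, class (B) contributes $o(1)$, and convergence of all joint factorial moments to those of independent Poissons implies the stated convergence in distribution.
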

Combining Lemmas~\ref{lem:treeUni} and~\ref{lem:jointDist}, we obtain the estimate of $\P(G_{n,p}\text{ is diagonal})$ in part~(1) as follows.
\begin{prop}\label{prop:part1}
Let $p=cn^{-1}$ for some fixed $0<c<1$. Then,
\[
\lim_{n\to\infty}\P(G_{n,p}\text{ is diagonal})=\sqrt{1-c}\exp(c/2+c^2/4+c^3/6+c^4/8).
\]
\end{prop}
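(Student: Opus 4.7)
The plan is to reduce diagonality of $G_{n,p}$ to the purely combinatorial event ``$G_{n,p}$ has no cycle of length at least five'', and then to evaluate the probability of that event by the Poisson approximation in Lemma~\ref{lem:jointDist}.

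First, I would observe that magnitude homology decomposes over connected components: any tuple appearing in $\MC_{k,\ell}$ must consist of vertices at pairwise finite distance and hence lies in a single component, so $\MH_{k,\ell}(G)=\bigoplus_C\MH_{k,\ell}(C)$ and $G_{n,p}$ is diagonal iff every component is. By Lemma~\ref{lem:treeUni}, with probability $1-o(1)$ every component of $G_{n,p}$ is a tree or unicyclic. Trees are diagonal by Example~\ref{eg:tree}. A unicyclic component whose cycle has length $k\ge5$ has girth $k$, hence is non-diagonal by the Corollary to Theorem~\ref{nondiag}. Granting the auxiliary statement that a unicyclic graph whose unique cycle has length $3$ or $4$ is diagonal, it follows that on the good event from Lemma~\ref{lem:treeUni}, $G_{n,p}$ is diagonal iff $C_i=0$ for every $i\ge5$, so
\[
\P(G_{n,p}\text{ is diagonal})=\P(C_i=0\ \forall\,i\ge5)+o(1).
\]

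Second, I would compute $\lim_n\P(C_i=0\ \forall\,i\ge5)$ by truncation. For each fixed $M\ge5$, Lemma~\ref{lem:jointDist} gives $\lim_n\P(C_i=0,\ 5\le i\le M)=\prod_{i=5}^M e^{-c^i/(2i)}$, so sending $M\to\infty$ yields $\limsup_n\P(\cdot)\le\prod_{i=5}^\infty e^{-c^i/(2i)}$. For the matching liminf, the Markov bound $\P(\exists\,i>M:C_i\ge1)\le\sum_{i>M}\E C_i\le\sum_{i>M}c^i/(2i)$, using the estimate already derived in the proof of part~(1), vanishes as $M\to\infty$ because $c<1$. Combining the two bounds gives the exact limit $\prod_{i=5}^\infty e^{-c^i/(2i)}$, and the identity $\ln(1-c)=-\sum_{i\ge1}c^i/i$ rewrites this as $\sqrt{1-c}\exp(c/2+c^2/4+c^3/6+c^4/8)$.

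The main obstacle is the auxiliary claim that a unicyclic graph whose cycle has length $3$ or $4$ is diagonal. The Corollary to Theorem~\ref{nondiag} supplies only the necessary condition $\gir\in\{3,4,\infty\}$, and the known sufficient conditions in the literature (trees, join graphs, pawful graphs) do not cover unicyclic graphs with non-trivial pendant trees, whose diameter generally exceeds two. I would prove it either by a Mayer--Vietoris decomposition separating the unique cycle from its attached trees, or by a dedicated algebraic Morse matching extending the method of Section~\ref{alg2}, exploiting that every vertex off the cycle has local girth $\infty$ so that Theorem~\ref{otherpart} already forces its local magnitude homology to concentrate on the diagonal.
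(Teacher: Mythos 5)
Your proposal is correct and follows essentially the same route as the paper: reduce diagonality, on the a.a.s.\ event from Lemma~\ref{lem:treeUni} that all components are trees or unicyclic, to the absence of cycles of length at least five (handling the short-cycle unicyclic components by Mayer--Vietoris, exactly as the paper does), then apply Lemma~\ref{lem:jointDist} with a truncation argument. The only difference is cosmetic: you control the tail $i>M$ by the first-moment bound $\sum_{i>M}\E C_i\le\sum_{i>M}c^i/(2i)$, whereas the paper bounds the probability that some cyclic component has more than $m$ vertices via $\E[n-T(G_{n,p})]=O(1)$; both are valid.
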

\begin{proof}
Let $F_1$ and $F_2$ denote the events that $G_{n,p}$ is diagonal and that $G_{n,p}$ does not contain any cycles of length at least $5$, respectively. We additionally define $E$ as the event that every component in $G_{n,p}$ is either tree or unicyclic.
We can confirm that every unicyclic component that has a cycle of length at least $5$ is non-diagonal. This follows from the Mayer--Vietoris Theorem for magnitude homology~\cite[Theorem~6.6]{HW17} combining with the fact that any cycle graphs of length at least $5$ are non-diagonal (cf.~\cite[Theorems~4.6 and~4.8]{Gu}).
Therefore, we have $E\cap F_1\subset E\cap F_2$. On the other hand, it holds that $E\cap F_1\supset E\cap F_2$ by using again the Mayer--Vietoris Theorem with the fact that tree graphs and $3$- or $4$-cycle graphs are diagonal (cf.~\cite[Examples~2.5 and~5.4]{HW17}).
Consequently, we obtain $E\cap F_1=E\cap F_2$. Thus, it reduces to prove that
\begin{equation}\label{eq:part1}
\lim_{n\to\infty}\P(F_2)
=\sqrt{1-c}\exp(c/2+c^2/4+c^3/6+c^4/8).
\end{equation}
Indeed, $|\P(F_1)-\P(F_2)|=|\P(F_1\setminus E)-\P(F_2\setminus E)|\le\P(E^c)=o(1)$ from the second conclusion of Lemma~\ref{lem:treeUni}.

Now, let $m\ge5$ be fixed, and let $D$ denote the event that every cyclic component has at most $m$ vertices. Then, we have
\begin{align}\label{eq:part1sand}
\P(C_5=C_6=\cdots=C_m=0)
\ge\P(F_2)
&\ge\P(\{C_5=C_6=\cdots=C_m=0\}\cap D)\nonumber\\
&\ge\P(C_5=C_6=\cdots=C_m=0)-\P(D^c).
\end{align}
From the first conclusion of Lemma~\ref{lem:treeUni}, we can take a constant $K$, depending only on $c$, such that $n-\E[T(G_{n,p})]\le K$ for all $n$. Since the number of cyclic components that have more than $m$ vertices is bounded above by $\{n-T(G_{n,p})\}/m$, we obtain
\[
\P(D^c)
\le\frac{n-\E[T(G_{n,p})]}m
\le\frac Km
\]
using Markov's inequality in the first inequality.
Furthermore, Lemma~\ref{lem:jointDist} yields
\[
\lim_{n\to\infty}\P(C_5=C_6=\cdots=C_m=0)
=\prod_{i=5}^m\exp\biggl(-\frac{c^i}{2i}\biggr)
=\exp\Biggl(-\frac12\sum_{i=5}^m\frac{c^i}i\Biggr).
\]
Combining the above estimates with Eq.~\eqref{eq:part1sand}, we obtain
\[
\exp\Biggl(-\frac12\sum_{i=5}^m\frac{c^i}i\Biggr)
\ge\limsup_{n\to\infty}\P(F_2)
\ge\liminf_{n\to\infty}\P(F_2)
\ge\exp\Biggl(-\frac12\sum_{i=5}^m\frac{c^i}i\Biggr)-\frac Km.
\]
Eq.~\eqref{eq:part1} follows from the equation above by taking $m\to\infty$, noting that
\[
\exp\Biggl(-\frac12\sum_{i=5}^\infty\frac{c^i}i\Biggr)
=\sqrt{1-c}\exp(c/2+c^2/4+c^3/6+c^4/8).\qedhere
\]
\end{proof}

For the estimate of $\P(G_{n,p}\text{ is non-diagonal})$ in part~(2), we use the following lemma. For a graph $G$, let us denote the number of connected components of $G$ by $\xi(G)$.
\begin{lem}[{\cite[Section~6]{ER60}}]\label{lem:compDecay}
Let $p=cn^{-1}$ for some fixed constant $c>0$. Then, for any $\eps>0$,
\[
\lim_{n\to\infty}\P\biggl(\biggl|\frac{\xi(G_{n,p})}n-u(c)\biggr|>\eps\biggr)=0,
\]
where
\[
u(c)=\frac1c\sum_{i=1}^\infty\frac{i^{i-2}}{i!}(ce^{-c})^i.
\]
\end{lem}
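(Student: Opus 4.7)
The plan is to count tree components of each fixed size and show that they dominate the total component count asymptotically, while non-tree components contribute $o(n)$. For each integer $i\ge1$, let $X_i$ denote the number of tree components of size $i$ in $G_{n,p}$. By Cayley's formula there are $i^{i-2}$ labeled trees on $i$ vertices, so
\[
\E[X_i]=\binom ni i^{i-2}p^{i-1}(1-p)^{\binom i2-(i-1)+i(n-i)}.
\]
Substituting $p=c/n$ and letting $n\to\infty$ with $i$ fixed yields the pointwise limit
\[
\frac{\E[X_i]}n\to\frac1c\cdot\frac{i^{i-2}}{i!}(ce^{-c})^i=:u_i(c),
\]
and a routine second-moment calculation (tree components on disjoint vertex sets are nearly independent, and those on overlapping sets are incompatible) gives $\var(X_i)=O(n)$, so Chebyshev's inequality provides $X_i/n\to u_i(c)$ in probability for every fixed $i$.

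To convert these pointwise limits into convergence of $\xi(G_{n,p})/n=\sum_iX_i/n+Y/n$, where $Y$ counts the non-tree components, I would fix $\eps>0$ and choose $I=I(\eps)$ so that $\sum_{i>I}u_i(c)<\eps/4$. Such an $I$ exists because Stirling gives $i^{i-2}/i!\sim i^{-5/2}e^i/\sqrt{2\pi}$ and $ce^{1-c}\le1$ for all $c>0$, so the series $u(c)=\sum_iu_i(c)$ converges. A uniform upper bound of the form $\E[X_i]\le n\cdot i^{i-2}c^{i-1}/i!$, obtained from $\binom ni\le n^i/i!$ and $(1-p)^{\cdot}\le1$, combined with Markov's inequality, then bounds the tail $\sum_{i>I}X_i/n$ to within $\eps/2$ in probability, while the finite sum $\sum_{i\le I}X_i/n$ converges to $\sum_{i\le I}u_i(c)$ by the previous paragraph.

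It remains to show $Y/n\to0$ in probability. For $c<1$ this is immediate from Lemma~\ref{lem:treeUni}: the expected number of vertices outside tree components is $O(1)$, and hence $\E[Y]=O(1)$. For $c\ge1$, one must separate off the (a.a.s.\ unique) giant component, which contributes only $1$ to $\xi(G_{n,p})$, and show that the remaining non-tree components contribute $o(n)$ in expectation, e.g.\ by bounding the expected excess edge count (first Betti number) outside the giant component via standard branching-process arguments for $G_{n,p}$. The main obstacle is precisely this supercritical step: rigorously isolating the giant component and controlling the complexity of the remaining fragments requires tools beyond the preceding lemmas in the paper, and is the heart of the classical Erd\H os--R\'enyi analysis.
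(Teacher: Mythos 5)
The paper does not actually prove this lemma: it is quoted from Erd\H os--R\'enyi~\cite{ER60} (see also Bollob\'as~\cite{Bo01}), so there is no in-paper argument to compare yours against, and I evaluate your proposal on its own. Your overall strategy --- first and second moments for the number $X_i$ of tree components of each fixed size $i$, truncation in $i$, and a separate bound on non-tree components --- is the standard route, and your first-moment formula and the identification of the limit $u_i(c)=\frac1c\frac{i^{i-2}}{i!}(ce^{-c})^i$ are correct. But two steps are genuinely broken or missing. First, your tail bound is vacuous once $c\ge 1/e$: after discarding the factor $(1-p)^{i(n-i)}$ you are left with $\E[X_i]\le n\,i^{i-2}c^{i-1}/i!$, and since Stirling gives $i^{i-2}/i!\sim e^i i^{-5/2}/\sqrt{2\pi}$, the series $\sum_{i>I}i^{i-2}c^{i-1}/i!$ diverges whenever $ce\ge1$, so Markov's inequality returns nothing. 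The factor $e^{-ci}$ coming from the isolation of the component is precisely what makes the series summable and cannot be thrown away. The repair is elementary and deterministic: a graph on $n$ vertices has at most $n/I$ components with more than $I$ vertices, so $\sum_{i>I}X_i/n\le 1/I$ surely.

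Second, your treatment of the non-tree count $Y$ is incomplete for $c\ge1$, as you acknowledge, but the machinery you propose (isolating the giant component, branching-process control of the fragments) is not needed. The same deterministic cutoff already absorbs every component, tree or not, with more than $I$ vertices, giant included. A non-tree component with at most $I$ vertices must contain a cycle of length at most $I$, and the expected number of such cycles is at most $\sum_{k=3}^{I}\binom nk\frac{k!}{2k}p^k\le\sum_{k=3}^{I}c^k/(2k)$, a constant depending only on $I$ and $c$; Markov's inequality then makes the number of small non-tree components $o(n)$ in probability. Putting this together: given $\eps>0$, choose $I$ with $1/I<\eps/4$ and $\sum_{i>I}u_i(c)<\eps/4$ (possible since $ce^{1-c}\le1$ makes $\sum_i u_i(c)$ converge, as you correctly observe), use Chebyshev on the finitely many terms $X_1,\dots,X_I$, and sandwich $\xi(G_{n,p})/n$ between $\sum_{i\le I}X_i/n$ and $\sum_{i\le I}X_i/n+1/I+o_{\P}(1)$. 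With these two repairs your argument closes for all $c>0$.
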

\begin{figure}[H]
\centering
\includegraphics[width=10cm,bb=0 0 672 258]{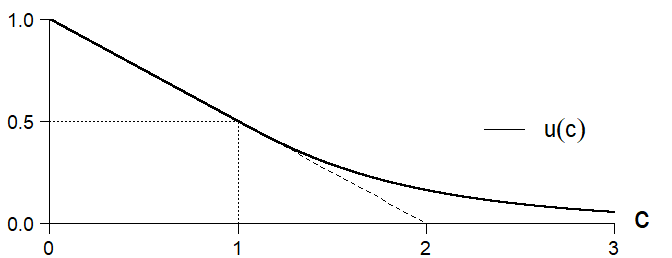}\\
\caption{Description of $u(c)$ in Lemma~\ref{lem:compDecay}.}
\label{fig:compDecay}
\end{figure}
For a graph $G$, the {\it circuit rank} $r(G)$ indicates the minimum number of edges that must be removed from $G$ to contain no cycles. As a well-known fact, it holds that $r(G)=|E(G)|-|V(G)|+\xi(G)$.
\begin{lem}\label{lem:cirRk}
Let $p=cn^{-1}$ for some fixed constant $c>1$. Then, there exists a constant $\dl>0$ such that $r(G_{n,p})\ge\dl n$ a.a.s.
\end{lem}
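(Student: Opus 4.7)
The plan is to exploit the classical identity $r(G)=|E(G)|-|V(G)|+\xi(G)$ and deduce a law of large numbers for $r(G_{n,p})/n$. The vertex count is deterministically $n$. The edge count $|E(G_{n,p})|$ is a sum of independent Bernoulli random variables $\bin(\binom n2,p)$ with mean $\binom n2p\sim cn/2$ and variance $O(n)$, so by Chebyshev's inequality $|E(G_{n,p})|/n\to c/2$ in probability. Combining this with Lemma~\ref{lem:compDecay}, I would conclude
\[
\frac{r(G_{n,p})}n\xrightarrow[n\to\infty]{\P}\frac c2-1+u(c).
\]

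Hence the lemma reduces to showing $c/2+u(c)>1$ for every $c>1$, after which any $\dl$ strictly smaller than $c/2+u(c)-1$ will satisfy the conclusion. Once this inequality is established, picking such a $\dl$ and applying the convergence in probability above with $\eps=(c/2+u(c)-1-\dl)/2$ gives $r(G_{n,p})\ge\dl n$ a.a.s.

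The main obstacle is thus the analytic inequality $c/2+u(c)>1$ for $c>1$. I would analyse the function $u$ using the tree function $T(x)=\sum_{i\ge1}\frac{i^{i-1}}{i!}x^i$, which satisfies $T(x)=xe^{T(x)}$, together with the identity $\sum_{i\ge1}\frac{i^{i-2}}{i!}x^i=T(x)-T(x)^2/2$. Setting $\theta=\theta(c)\coloneqq T(ce^{-c})$, one obtains $u(c)=\theta/c-\theta^2/(2c)$, and for $c>1$ the value $\theta$ is the unique solution in $(0,1)$ of $\theta e^{-\theta}=ce^{-c}$ (the reflection branch of $T$). Writing $f(c)\coloneqq c^2/2+\theta-\theta^2/2-c$ so that $c/2+u(c)-1=f(c)/c$, a direct computation using the implicit relation $\theta'(1-\theta)=(1-c)\theta/c$ gives
\[
f'(c)=(c-1)\cdot\frac{c-\theta}c,
\]
which is strictly positive for $c>1$ since $\theta<1<c$. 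Because $\theta(1)=1$ yields $f(1)=0$, we conclude $f(c)>0$ on $(1,\infty)$, which is the desired inequality.

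A cleaner, more probabilistic alternative I would consider, which avoids the explicit derivative calculation, is to bound $\xi(G_{n,p})$ from below by the number of tree components alone, use Cayley's formula to compute the expected number of trees of each fixed size $i$ as $\frac n{c}\cdot\frac{i^{i-2}}{i!}(ce^{-c})^i(1+o(1))$, and then combine this with the easy lower bound $|E(G_{n,p})|/n\to c/2$ to verify positivity of the limit at any single $c_0>1$; monotonicity-type coupling arguments would then extend the conclusion to all $c>1$. Either route makes the inequality the crux, so I expect this to be the step requiring the most care.
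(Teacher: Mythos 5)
Your proof follows the same route as the paper: both rest on the identity $r(G)=|E(G)|-|V(G)|+\xi(G)$, the concentration of $\#E(G_{n,p})/n$ around $c/2$, Lemma~\ref{lem:compDecay} for $\xi(G_{n,p})/n$, and the reduction to the single analytic inequality $u(c)>1-c/2$ for $c>1$. The one substantive difference is that the paper simply asserts this inequality (``We can verify that $u(c)>1-c/2$ whenever $c>1$'') with a pointer to a figure, whereas you actually prove it: your use of the tree function identity $\sum_{i\ge1}\tfrac{i^{i-2}}{i!}x^i=T(x)-T(x)^2/2$, the conjugate parameter $\theta e^{-\theta}=ce^{-c}$ with $\theta\in(0,1)$, and the computation $f'(c)=(c-1)(c-\theta)/c>0$ with $f(1)=0$ is correct and supplies exactly the detail the paper omits. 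So your argument is not merely equivalent but strictly more complete at the crux; the secondary ``probabilistic alternative'' you sketch is not needed and is less clean than the derivative computation you already carried out.
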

\begin{proof}
We can verify that $u(c)>1-c/2$ whenever $c>1$~(see also Figure~\ref{fig:compDecay}). Therefore, Lemma~\ref{lem:compDecay} implies that for $c>1$, there exists a constant $\dl>0$ such that $\xi(G_{n,p})\ge(1-c/2+2\dl)n$ a.a.s.
Furthermore, since $\#E(G_{n,p})$ follows the binomial distribution with parameters $\binom n2$ and $cn^{-1}$, a direct computation yields
\begin{align*}
\E\biggl[\frac{\#E(G_{n,p})}n\biggr]=\frac c2\biggl(1-\frac1n\biggr)\xrightarrow[n\to\infty]{}\frac c2
\shortintertext{ and }
\var\biggl(\frac{\#E(G_{n,p})}n\biggr)=\frac c{2n}\biggl(1-\frac1n\biggr)\biggl(1-\frac cn\biggr)\xrightarrow[n\to\infty]{}0.
\end{align*}
Therefore, using the Minkowski inequality,
\begin{align}\label{eq:L2conv}
\E\Biggl[\biggl(\frac{\#E(G_{n,p})}n-\frac c2\biggr)^2\Biggr]^{1/2}
&\le\E\Biggl[\biggl(\frac{\#E(G_{n,p})}n-\E\biggl[\frac{\#E(G_{n,p})}n\biggr]\biggr)^2\Biggr]^{1/2}+
\biggl|\E\biggl[\frac{\#E(G_{n,p})}n\biggr]-\frac c2\biggr|\nonumber\\
&=\sqrt{\var\biggl(\frac{\#E(G_{n,p})}n\biggr)}+\biggl|\E\biggl[\frac{\#E(G_{n,p})}n\biggr]-\frac c2\biggr|
\xrightarrow[n\to\infty]{}0.
\end{align}
Thus, from Markov's inequality, we have $\#E(G_{n,p})\ge(c/2-\dl)n$ a.a.s. Combining these estimates above, we obtain a.a.s. $r(G_{n,p})=\#E(G_{n,p})-n+\xi(G_{n,p})
\ge(c/2-\dl)n-n+(1-c/2+2\dl)n=\dl n$.
\end{proof}
We now provide the estimate of $\P(G_{n,p}\text{ is non-diagonal})$ in part~(2).
\begin{prop}\label{prop:part2}
Let $\liminf_{n\to\infty}np>1$ and $p=o(n^{-3/4})$. Then, $G_{n,p}$ is non-diagonal a.a.s.
\end{prop}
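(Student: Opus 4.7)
The plan is to exhibit, a.a.s., an edge $e\in E(G_{n,p})$ with $\gir_e(G_{n,p})\in[5,\infty)$; Theorem~\ref{nondiag} then yields $\MH_{2,\ell}(G_{n,p})\neq 0$ with $\ell=\lfloor(\gir_e(G_{n,p})+1)/2\rfloor\ge 3$, so $G_{n,p}$ is non-diagonal.

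First I would pass to a linear lower bound on the circuit rank. Since $\liminf_{n\to\infty}np>1$, fix $\eps>0$ with $p\ge(1+\eps)/n$ for all sufficiently large $n$. The circuit rank $r(G)=\#E(G)-\#V(G)+\xi(G)$ is monotone non-decreasing under edge addition, because each new edge either merges two components (leaving $r$ unchanged) or creates one additional independent cycle (increasing $r$ by one). Using the standard coupling $G_{n,(1+\eps)/n}\subseteq G_{n,p}$ together with Lemma~\ref{lem:cirRk} applied to $G_{n,(1+\eps)/n}$, we obtain a constant $\dl>0$ such that $r(G_{n,p})\ge\dl n$ a.a.s.

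Next I would bound the number $B$ of \emph{bad} edges, meaning edges that lie on some $3$-cycle or $4$-cycle of $G_{n,p}$. Since each $i$-cycle covers $i$ edges, $B\le 3C_3+4C_4$, and standard counting gives $\E C_3\le n^3p^3/6$ and $\E C_4\le n^4p^4/8$. The hypothesis $p=o(n^{-3/4})$ yields $np=o(n^{1/4})$, hence $(np)^4=o(n)$ and $(np)^3=o(n^{3/4})=o(n)$; therefore $\E B=o(n)$, and Markov's inequality forces $B\le\dl n/2$ a.a.s.

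On the intersection of these two good events, form $G'$ by deleting all bad edges from $G_{n,p}$. Removing one edge decreases the circuit rank by at most one, so $r(G')\ge\dl n-\dl n/2>0$, and $G'$ contains some cycle $C$. Every edge of $C$ survives in $G'$, so it lies on no $3$- or $4$-cycle of $G_{n,p}$; in particular, $C$ itself has length at least $5$ (otherwise its own edges would have been removed). For any edge $e$ of $C$ we conclude $\gir_e(G_{n,p})\in[5,\infty)$, as required.

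The argument is short once framed correctly; the main technical point is that the threshold $p=o(n^{-3/4})$ is tailored precisely so that $\E C_4=O((np)^4)=o(n)$, ensuring that sublinearly many bad edges cannot absorb the linearly many independent cycles supplied by Lemma~\ref{lem:cirRk}.
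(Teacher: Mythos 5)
Your proposal is correct and follows essentially the same route as the paper: a linear lower bound on the circuit rank via Lemma~\ref{lem:cirRk} (you make the monotone-coupling step explicit, which the paper leaves implicit), first-moment bounds showing only $o(n)$ edges lie on $3$- or $4$-cycles when $p=o(n^{-3/4})$, and Theorem~\ref{nondiag} applied to a surviving edge with $\gir_e\in[5,\infty)$. The only cosmetic difference is the last step: the paper counts edges directly via $X\ge Y-Y_3-Y_4$ with $Y\ge r(G_{n,p})$, whereas you delete the bad edges and observe the circuit rank stays positive; both are valid.
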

\begin{proof}
Let $X$ denote the number of edges $e\in E(G_{n,p})$ such that $\gir_e(G_{n,p})\in[5,\infty)$. From Theorem~\ref{nondiag}, it suffices to prove that $X\ge1$ a.a.s.
We define $Y$ as the number of edges that are contained in some cycle. Then, $Y\ge r(G_{n,p})$ because of the definition of the circuit rank. Thus, by applying Lemma~\ref{lem:cirRk} with some fixed constant $1<c<\liminf_{n\to\infty}np$, there exists a constant $\dl>0$ such that $Y\ge r(G_{n,p})\ge\dl n$ a.a.s.
For $i\ge3$, we additionally define $Y_i$ as the number of edges that are contained in some $i$-cycle. Then,
\[
\P\biggl(Y_i>\frac\dl 3n\biggr)
\le\frac3{\dl n}\E Y_i
\le\frac{3i}{\dl n}\E C_i
\le\frac{3i}{\dl n}\frac{(np)^i}{2i}
=\frac3{2\dl}n^{i-1}p^i.
\]
The first inequality follows from Markov's inequality. In the second inequality, we use a crude estimate $Y_i\le iC_i$. Since $p=o(n^{-3/4})$, for $i=3,4$, the right-hand side of the above equation converges to zero as $n\to\infty$. Therefore, $Y_3,Y_4\le\dl n/3$ a.a.s.
Combining the estimates for $Y$, $Y_3$, and $Y_4$,
\[
\P\biggl(X\ge\frac\dl 3n\biggr)
\ge\P\biggl(Y-Y_3-Y_4\ge\frac\dl 3n\biggr)
\ge\P\biggl(Y\ge\dl n\text{ and }Y_3,Y_4\le\frac\dl 3n\biggr)\xrightarrow[n\to\infty]{}1,
\]
which completes the proof.
\end{proof}
Combining Propositions~\ref{prop:part1} and~\ref{prop:part2}, we obtain the conclusion of Theorem~\ref{thm:main1}.

Lastly, we prove Theorem~\ref{thm:main2}. The notion of pawful graphs, introduced  by Gu~\cite{Gu}, is a key for the proof. Recall from Definition~\ref{df:pawful} that a pawful graph $G$ is a graph of diameter at most two satisfying the property that for any distinct vertices $x,y,z\in V(G)$ with $d(x,y)=d(y,z)=2$ and $d(z,x)=1$, they have a common neighbor.
Since pawful graphs are diagonal, the conclusion of Theorem~\ref{thm:main2} follows immediately from the following Theorem.
\begin{thm}[{\cite[Theorem~3.2]{Ka09}}]
Let $m\in\N$ and $\eps>0$. Then,
\[
p\ge\biggl(\frac{(m+\eps)\log n}n\biggr)^{1/m}
\]
implies that every $m$ vertices in $G_{n,p}$ have a common neighbor a.a.s.
\end{thm}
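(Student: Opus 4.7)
The plan is to apply a standard first moment argument. Let $X$ denote the number of unordered $m$-subsets $S\subset V(G_{n,p})$ that have no common neighbor, that is, no vertex in $V(G_{n,p})\setminus S$ is adjacent to every element of $S$. The statement reduces to showing $\P(X\ge1)\to0$, for which by Markov's inequality it suffices to prove $\E X\to0$.

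First I would compute $\E X$ exactly. For a fixed $m$-subset $S$ and a fixed vertex $v\notin S$, the probability that $v$ is adjacent to every vertex of $S$ equals $p^m$, by mutual independence of the $m$ potential edges from $v$ to $S$. Moreover, the events ``$v$ is a common neighbor of $S$'' for distinct $v\in V(G_{n,p})\setminus S$ are mutually independent, because they depend on disjoint sets of potential edges. Consequently, the probability that $S$ has no common neighbor equals $(1-p^m)^{n-m}$, and by linearity,
\[
\E X=\binom{n}{m}(1-p^m)^{n-m}.
\]

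Next I would apply the elementary bounds $\binom{n}{m}\le n^m/m!$ and $1-p^m\le\exp(-p^m)$ to obtain
\[
\E X\le\frac{n^m}{m!}\exp\bigl(-p^m(n-m)\bigr).
\]
Substituting the hypothesis $p^m\ge(m+\eps)(\log n)/n$ yields $p^m(n-m)\ge(m+\eps)(\log n)(1-m/n)$, so that for $n$ large the exponential factor is at most $n^{-(m+\eps/2)}$. Hence $\E X\le n^{-\eps/2}/m!\to0$, and Markov's inequality gives $\P(X\ge1)\to0$, which is the desired conclusion.

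The argument is essentially routine; the only mild care needed is in the passage from $p^m(n-m)$ to $(m+\eps)\log n$, where one must absorb the lower-order correction $m\,p^m=O((\log n)/n)$ into a slightly smaller constant in the exponent (using $m+\eps/2$ in place of $m+\eps$). No further structural obstacle arises, because the events involved decouple cleanly into disjoint edge families, making a second moment or concentration argument unnecessary.
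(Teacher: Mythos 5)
Your proof is correct, and it is essentially the same first-moment computation used in the cited source [Ka09, Theorem~3.2]; the paper itself states this result only as a citation and gives no proof. The one point needing care --- that the independence over distinct candidate neighbors $v\notin S$ gives the exact value $(1-p^m)^{n-m}$, and that the lower-order term $mp^m$ is absorbed by shrinking $\eps$ to $\eps/2$ --- is handled correctly, so nothing is missing.
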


\subsection{Weak law of large numbers for the rank of magnitude homology}
In this subsection, we prove Theorem~\ref{thm:main3} using Theorem~\ref{otherpart}. We first give a general upper bound of the rank of magnitude homology of a graph.
\begin{lem}\label{lem:rkBound}
Let $G$ be a graph, and let $x\in V(G)$ be fixed. Then, for any $k,\ell\in\N$,
\[
\rk(\MH_{k,\ell}^x(G))\le\binom{\ell-1}{k-1}\Bigl(\max_{y\in V(G)}\deg y\Bigr)^\ell.
\]
\end{lem}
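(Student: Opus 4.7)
The plan is to bound $\rk(\MH_{k,\ell}^x(G))$ by the rank of the ambient chain module $\MC_{k,\ell}^x(G)$ and then carry out a direct counting of its generators, stratified by the sequence of consecutive distances.

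First, since $\MH_{k,\ell}^x(G)$ is a subquotient of the free $\Z$-module $\MC_{k,\ell}^x(G)$, we have
\[
\rk(\MH_{k,\ell}^x(G))\le\rk(\MC_{k,\ell}^x(G))=\#\{(x_0,x_1,\ldots,x_k)\in V(G)^{k+1}\mid x_0=x,\ x_{i-1}\neq x_i,\ L(x_0,\ldots,x_k)=\ell\}.
\]
So it suffices to bound this cardinality by $\binom{\ell-1}{k-1}\Delta^\ell$, where $\Delta\coloneqq\max_{y\in V(G)}\deg y$.

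Next, I would partition the set of generators according to the tuple of distances $d_i\coloneqq d(x_{i-1},x_i)$ for $i=1,\ldots,k$. Since the $x_i$ are pairwise distinct along consecutive positions and $L(x_0,\ldots,x_k)=\ell$, the tuple $(d_1,\ldots,d_k)$ is a composition of $\ell$ into $k$ positive parts, and the number of such compositions is exactly $\binom{\ell-1}{k-1}$.

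Then, for each fixed composition $(d_1,\ldots,d_k)$, I would bound the number of vertex sequences realizing it by walking out step by step: given $x_{i-1}$, the number of vertices $x_i$ with $d(x_{i-1},x_i)=d_i$ is at most the number of walks of length $d_i$ starting at $x_{i-1}$ (each such vertex admits at least one shortest path, and distinct vertices give distinct walks from $x_{i-1}$ via a fixed choice of shortest path). The latter is bounded by $\Delta^{d_i}$, since at each of the $d_i$ steps there are at most $\Delta$ choices of next vertex. Multiplying across $i=1,\ldots,k$ yields a bound of $\prod_{i=1}^k\Delta^{d_i}=\Delta^\ell$ for each composition. Summing over the $\binom{\ell-1}{k-1}$ compositions gives the desired bound.

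There is no real obstacle here: the argument is purely combinatorial, and the only point to be slightly careful about is the step relating ``vertices at distance $d_i$'' to ``walks of length $d_i$'', which is handled by choosing, for each such vertex, one shortest path and noting that this assignment is injective.
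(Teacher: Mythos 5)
Your proposal is correct and follows essentially the same route as the paper: bound the homology rank by the rank of $\MC_{k,\ell}^x(G)$, stratify the generators by the composition $(d_1,\ldots,d_k)$ of $\ell$ into $k$ positive parts (giving the factor $\binom{\ell-1}{k-1}$), and bound each stratum by $\prod_i\Delta^{d_i}=\Delta^\ell$ using $\#\{v\mid d(u,v)=r\}\le\Delta^r$. Your extra remark justifying that last inequality via an injective choice of shortest paths is a fine (if implicit in the paper) detail.
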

\begin{proof}
Recall that the generator set of $\MC_{k,\ell}^x(G)$ is
\begin{align*}
&\left\{(x_0,x_1,\ldots,x_k)\in V(G)^{k+1}\relmiddle|x=x_0\neq x_1\neq\cdots\neq x_k,\sum_{i=1}^kd(x_{i-1},x_i)=\ell\right\}\\
&=\bigsqcup_{\substack{(\ell_1,\ell_2,\ldots,\ell_k)\in\N^k\\\ell_1+\ell_2+\cdots+\ell_k=\ell}}\{(x_0,x_1,\ldots,x_k)\in V(G)^{k+1}\mid x_0=x,d(x_{i-1},x_i)=\ell_i\text{ for }1\le i\le k\}.
\end{align*}
Noting that for any $u\in V(G)$ and $r\in\N$,
\[
\#\{v\in V(G)\mid d(u,v)=r\}\le\Bigl(\max_{y\in V(G)}\deg y\Bigr)^r,
\]
we have
\begin{align*}
&\#\{(x_0,x_1,\ldots,x_k)\in V(G)^{k+1}\mid x_0=x,d(x_{i-1},x_i)=\ell_i\text{ for }1\le i\le k\}\\
&\le\prod_{i=1}^k\Bigl(\max_{y\in V(G)}\deg y\Bigr)^{\ell_i}\\
&=\Bigl(\max_{y\in V(G)}\deg y\Bigr)^\ell
\end{align*}
for any $(\ell_1,\ell_2,\ldots,\ell_k)\in\N^k$ with $\ell_1+\ell_2+\cdots+\ell_k=\ell$.
Furthermore, a simple combinatorial argument yields
\[
\#\{(\ell_1,\ell_2,\ldots,\ell_k)\in\N^k \mid \ell_1+\ell_2+\cdots+\ell_k=\ell\}=\binom{\ell-1}{k-1}.
\]
Thus, we conclude that
\[
\rk(\MH_{k,\ell}^x(G))\le\rk(\MC_{k,\ell}^x(G))\le\binom{\ell-1}{k-1}\Bigl(\max_{y\in V(G)}\deg y\Bigr)^\ell.\qedhere
\]
\end{proof}
The following lemma gives a useful upper bounds of the probability that a binomial distributed random variable is larger than expected.
\begin{lem}[{\cite[Lemma~1.1]{Pe03}}]\label{lem:Chernoff}
Suppose $N\in\N$, $p\in(0,1)$, and $0<k<N$. Let $X$ be a binomial random variable with parameters $N$ and $p$, and set $\mu\coloneqq\E X=Np$. If $k\ge e^2\mu$, then
\[
\P(X>k)\le\exp\biggl(-\frac k2\log\biggl(\frac k\mu\biggr)\biggr).
\]
\end{lem}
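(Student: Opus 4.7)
The plan is to invoke the classical exponential-moment (Chernoff) method. First I would apply Markov's inequality to $e^{tX}$ for a free parameter $t>0$: for any such $t$,
\[
\P(X>k)=\P(e^{tX}>e^{tk})\le e^{-tk}\E[e^{tX}].
\]
Since $X$ is a sum of $N$ independent Bernoulli$(p)$ random variables, its moment generating function factorizes as $\E[e^{tX}]=(1-p+pe^t)^N$, and the elementary inequality $1+x\le e^x$ gives
\[
\E[e^{tX}]\le\exp(Np(e^t-1))=\exp(\mu(e^t-1)).
\]
Combining these yields $\P(X>k)\le\exp(\mu(e^t-1)-tk)$ for every $t>0$.

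Next I would optimize the exponent in $t$. Differentiating $\mu(e^t-1)-tk$ and setting the derivative to zero gives the minimizer $t^{\ast}=\log(k/\mu)$, which is positive since the hypothesis $k\ge e^2\mu$ forces $k>\mu$. Substituting back produces the standard Cram\'er--Chernoff form
\[
\P(X>k)\le\exp\bigl(k-\mu-k\log(k/\mu)\bigr).
\]

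The only remaining task is to absorb the linear-in-$k$ correction $k-\mu$ into half of the dominant term $k\log(k/\mu)$, which is precisely where the hypothesis $k\ge e^2\mu$ enters. Under that hypothesis $\log(k/\mu)\ge2$, so $k\le\tfrac{k}{2}\log(k/\mu)$, and therefore
\[
k-\mu-k\log(k/\mu)\le k-k\log(k/\mu)\le\tfrac{k}{2}\log(k/\mu)-k\log(k/\mu)=-\tfrac{k}{2}\log(k/\mu),
\]
which gives the claimed inequality. I do not foresee a real obstacle here; the subtle point is just recognizing that the threshold $e^2\mu$ is calibrated exactly so that the correction $k-\mu$ is dominated by half of $k\log(k/\mu)$, letting us retain the clean bound $\exp(-\tfrac{k}{2}\log(k/\mu))$ rather than the sharper but messier Cram\'er expression.
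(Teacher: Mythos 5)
Your proof is correct, and it is essentially the standard Cram\'er--Chernoff argument underlying the cited result (the paper itself does not prove this lemma but quotes it from Penrose's book, where Lemma~1.1 is established by exactly this exponential-moment optimization followed by the observation that $k\ge e^2\mu$ gives $\log(k/\mu)\ge2$ and hence absorbs the $k-\mu$ correction). No gaps; the optimization at $t^{\ast}=\log(k/\mu)$ and the final absorption step are both carried out correctly.
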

In what follows, let the Erd\H os--R\'enyi graph $G_{n,p}$ be constructed on an $n$-vertex set $V_n$, and let $o\in V_n$ be an arbitrarily fixed vertex.
\begin{lem}\label{lem:llrkBound}
Let $k,\ell\in\N$ be fixed. It holds that for sufficiently large $n$ and any $x\in V_n$,
\[
\E[\rk(\MH_{k,\ell}^x(G_{n,p}))^2]\le\binom{\ell-1}{k-1}^2(\log n)^{2\ell}.
\]
\end{lem}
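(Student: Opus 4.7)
The plan is to combine Lemma~\ref{lem:rkBound} with moment estimates for the maximum vertex degree $M\coloneqq\max_{y\in V_n}\deg_{G_{n,p}}y$. Since $p=cn^{-1}$, Lemma~\ref{lem:rkBound} yields the pointwise bound $\rk(\MH_{k,\ell}^x(G_{n,p}))^2\le\binom{\ell-1}{k-1}^2 M^{2\ell}$, so the assertion reduces to showing that
\[
\E[M^{2\ell}]\le(\log n)^{2\ell}\quad\text{for all sufficiently large }n.
\]

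To control $\E[M^{2\ell}]$, I would use a Chernoff-type tail bound followed by a union bound. Each $\deg y$ is $\bin(n-1,p)$ with mean $\mu_n\coloneqq(n-1)p\to c$, so Lemma~\ref{lem:Chernoff} gives $\P(\deg y>k)\le\exp(-\tfrac{k}{2}\log(k/\mu_n))$ whenever $k\ge e^2\mu_n$. Taking a union bound over the $n$ vertices yields
\[
\P(M>k)\le n\exp\biggl(-\frac{k}{2}\log\biggl(\frac{k}{\mu_n}\biggr)\biggr).
\]

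Next I would apply the layer-cake identity
\[
\E[M^{2\ell}]=\sum_{k=1}^\infty(k^{2\ell}-(k-1)^{2\ell})\P(M\ge k)
\]
and split the sum at a threshold $k_0\coloneqq\lceil A\log n/\log\log n\rceil$ for a fixed constant $A>2$. For $k\le k_0$ I would use the crude bound $\P(M\ge k)\le1$, giving a contribution of at most $k_0^{2\ell}=O((\log n)^{2\ell}/(\log\log n)^{2\ell})$, which is $o((\log n)^{2\ell})$. For $k>k_0$ I would plug in the Chernoff tail together with $k^{2\ell}-(k-1)^{2\ell}\le 2\ell k^{2\ell-1}$; because $k_0\log(k_0/\mu_n)/2\ge(A/2)\log n(1+o(1))$, the dominant term of the tail is $O((\log n)^{2\ell-1}n^{-\eta})$ for some $\eta>0$, and the successive summands decay super-exponentially in $k$, so the tail sum is $o(1)$. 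Combining the two contributions gives $\E[M^{2\ell}]\le k_0^{2\ell}+o(1)\le(\log n)^{2\ell}$ for all sufficiently large $n$.

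The only genuine issue is balancing the threshold $k_0$: it must be small enough that $k_0^{2\ell}$ loses a factor of at least $(\log\log n)^{2\ell}$ against $(\log n)^{2\ell}$, yet large enough that $k_0\log(k_0/\mu_n)/2$ comfortably exceeds $\log n$ so the union-bound factor $n$ is absorbed. The choice $k_0\asymp\log n/\log\log n$ with any constant $A>2$ accomplishes both, and no further obstacle is expected.
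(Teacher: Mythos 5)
Your proposal is correct and follows essentially the same route as the paper: reduce to bounding $\E[M^{2\ell}]$ for the maximum degree $M$ via Lemma~\ref{lem:rkBound}, then control $M$ with Lemma~\ref{lem:Chernoff} plus a union bound over the $n$ vertices. The only difference is cosmetic — you evaluate $\E[M^{2\ell}]$ by a layer-cake sum split at $k_0\asymp\log n/\log\log n$, while the paper simply conditions on the event $\{M\le(\log n)/2\}$ and bounds the complementary contribution by $n^{2\ell}\P(M>(\log n)/2)=o(1)$ — and both yield the claimed $(\log n)^{2\ell}$ bound.
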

\begin{proof}
Let $D$ be the event that the maximum degree of $G_{n,p}$ is at most $(\log n)/2$. Then,
\[
\P(D^c)
\le\sum_{y\in V_n}\P\Bigl(\deg y>\frac{\log n}2\Bigr)
=n\P\Bigl(\deg o>\frac{\log n}2\Bigr).
\]
Note that $\deg o$ follows the binomial distribution with parameters $n-1$ and $cn^{-1}$, and set $\mu\coloneqq\E[\deg o]=(n-1)cn^{-1}$. Applying Lemma~\ref{lem:Chernoff} with $N=n-1$, $p=cn^{-1}$, and $k=(\log n)/2$, we have
\[
\P\Bigl(\deg o>\frac{\log n}2\Bigr)\le\exp\Bigl(-\frac{\log n}4\log\Bigl(\frac{\log n}{2\mu}\Bigr)\Bigr)
\le\exp\Bigl(-\frac15\log n\log\log n\Bigr)=n^{-(\log\log n)/5}
\]
for sufficiently large $n$. Therefore, for sufficiently large $n$ and any $x\in V_n$, we obtain
\begin{align*}
\E\bigl[\rk(\MH_{k,\ell}^x(G))^2\bigr]
&\le\binom{\ell-1}{k-1}^2\E\biggl[\Bigl(\max_{y\in V_n}\deg y\Bigr)^{2\ell}\biggr]\\
&\le\binom{\ell-1}{k-1}^2\biggl\{\E\biggl[\Bigl(\max_{y\in V_n}\deg y\Bigr)^{2\ell};D\biggr]+n^{2\ell}\P(D^c)\biggr\}\\
&\le\binom{\ell-1}{k-1}^2\biggl\{\Bigl(\frac{\log n}2\Bigr)^{2\ell}+n^{2\ell+1-(\log\log n)/5}\biggr\}\\
&\le\binom{\ell-1}{k-1}^2(\log n)^{2\ell}.
\end{align*}
In the first inequality, we use Lemma~\ref{lem:rkBound}.
\end{proof}

We now trun to proving Theorem~\ref{thm:main3} using Theorem~\ref{otherpart}.
\begin{proof}[Proof of Theorem~\ref{thm:main3}]
Since $\MH_{k,\ell}(G_{n,p})=0$ if $\ell<k$, we assume that $\ell\ge k$. For $i\ge3$, define $E_i^x$ as the event that $G_{n,p}$ has at least one $i$-cycle containing $x$, and set
\[
E^x\coloneqq\bigcup_{i=3}^{2(\ell-k)+4}E_i^x.
\]
Applying Theorem~\ref{otherpart}, we have
\begin{align*}
\frac{\rk(\MH_{k,\ell}(G_{n,p}))}n
&=\frac1n\sum_{x\in V_n}\rk(\MH_{k,\ell}^x(G_{n,p}))\\
&\le\frac1n\sum_{x\in V_n}\bigl\{(\deg x)\dl_{k,\ell}+\rk(\MH_{k,\ell}^x(G_{n,p}))1_{E^x}\bigr\}\\
&=\frac{2\#E(G_{n,p})}n\dl_{k,\ell}+\frac1n\sum_{x\in V_n}\rk(\MH_{k,\ell}^x(G_{n,p}))1_{E^x}.
\end{align*}
On the other hand, since $\rk(\MH_{\ell,\ell}(G_{n,p}))\ge2\#E(G_{n,p})$,
we have
\[
\frac{\rk(\MH_{k,\ell}(G_{n,p}))}n
\ge\frac{2\#E(G_{n,p})}n\dl_{k,\ell}.
\]
Combining these estimates, we obtain
\[
\biggl|\frac{\rk(\MH_{k,\ell}(G_{n,p}))}n-\frac{2\#E(G_{n,p})}n\dl_{k,\ell}\biggr|
\le\frac1n\sum_{x\in V_n}\rk(\MH_{k,\ell}^x(G_{n,p}))1_{E^x}.
\]
Therefore, using the triangle inequality,
\begin{align}\label{eq:L1norm}
&\E\biggl|\frac{\rk(\MH_{k,\ell}(G_{n,p}))}n-c\dl_{k,\ell}\biggr|\nonumber\\
&\le\E\biggl|\frac{\rk(\MH_{k,\ell}(G_{n,p}))}n-\frac{2\#E(G_{n,p})}n\dl_{k,\ell}\biggr|+\E\biggr|\frac{2\#E(G_{n,p})}n\dl_{k,\ell}-c\dl_{k,\ell}\biggr|\nonumber\\
&\le\frac1n\sum_{x\in V_n}\E[\rk(\MH_{k,\ell}^x(G_{n,p}))1_{E^x}]+\E\biggr|\frac{2\#E(G_{n,p})}n-c\biggr|\dl_{k,\ell}\nonumber\\
&\le\E[\rk(\MH_{k,\ell}^o(G_{n,p}))1_{E^o}]+\E\biggr|\frac{2\#E(G_{n,p})}n-c\biggr|\nonumber\\
&\le\E\bigl[\rk(\MH_{k,\ell}^o(G_{n,p}))^2\bigr]^{1/2}\P\bigl(E^o\bigr)^{1/2}+\E\Biggr[\biggr(\frac{2\#E(G_{n,p})}n-c\biggr)^2\Biggr]^{1/2}.
\end{align}
In the last line, we use the Cauchy--Schwarz inequality.
The second term of Eq.~\eqref{eq:L1norm} converges to zero as $n\to\infty$, as seen in Eq.~\eqref{eq:L2conv}.
For the estimate of the first term in Eq.~\eqref{eq:L1norm}, we define $C_i^o$ as the number of $i$-cycles containing $o$. We then have
\[
\P(E_i^o)
=\P(C_i^o\ge1)
\le\E C_i^o
=\frac{(n-1)(n-2)\cdots(n-i+1)}2\Bigl(\frac cn\Bigr)^i
\le\frac{c^i}{2n}
\]
from Markov's inequality, which implies that
\[
\P(E^o)
\le\sum_{i=3}^{2(\ell-k)+4}\P(E_i^o)
\le\frac1{2n}\sum_{i=3}^{2(\ell-k)+4}c^i.
\]
From the estimate above and Lemma~\ref{lem:llrkBound}, the first term of Eq.~\eqref{eq:L1norm} converges to zero as $n\to\infty$.
Consequently, we obtain
\[
\lim_{n\to\infty}\E\biggl|\frac{\rk(\MH_{k,\ell}(G_{n,p}))}n-c\dl_{k,\ell}\biggr|=0,
\]
which implies the first conclusion. Again from Markov's inequality, the above equation also implies the second conclusion.
\end{proof}



\end{document}